\newtheorem  {theorem}       {Theorem}
\newtheorem  {lemma} [theorem]        {Lemma}
\newtheorem  {corollary} [theorem]    {Corollary}
\newtheorem  {proposition}[theorem]   {Proposition}
\newtheorem* {theorem*}      {Theorem}
\newtheorem* {lemma*}        {Lemma}
\newtheorem* {corollary*}    {Corollary}
\newtheorem* {proposition*}  {Proposition}
\newtheorem* {definition*}   {Definition}
\newtheorem* {remark*}       {Remark}
\newtheorem  {remark}        {Remark}[section]
\newtheorem* {remarks*}      {Remarks}
\newtheorem* {claim*}        {Claim}
\newcommand{\G}{\ensuremath{\mathcal{G}}}
\title
{Well-posedness of the mean field forest fire age evolution equation}
\author {Edward Crane\footnote{University of Bristol and Heilbronn Institute for Mathematical Research}}
\date{July 11, 2020}
\begin{document}
\maketitle

\begin{abstract}
We prove the well-posedness of a differential equation that describes the evolution of the large-system limit of the empirical age measure in the mean field forest fire model of R\'ath and T\'oth \cite{RathToth}. This forest fire model is a random graph process on $n$ vertices, whose dynamics combine the Erd\H{o}s-R\'enyi dynamics with a Poisson rain of lightning strikes. All edges in any connected component are deleted as soon as any of its vertices is struck by lightning. Each vertex has an age, which increases at rate $1$ but is reset to $0$ each time it burns. We consider the asymptotic lightning regime in which the model displays self-organized criticality.   Crane, R\'ath and Yeo \cite{CraneRathYeo} take the initial state to be an inhomogeneous random graph whose edge probabilities depend on the ages of the vertices. They show that as $n \to \infty$ the empirical age distribution converges as a process to the solution of a  deterministic autonomous differential equation. It is a nonlinear age-dependent population dynamics model whose age-specific mortality modulus involves the leading eigenfunction of the branching operator of an associated multitype branching process. The differential equation displays self-organized criticality in the sense that the leading eigenvalue of the branching operator is held at $1$ without this being imposed as a boundary condition.
\end{abstract}

%\tableofcontents

\section{Introduction}

 We prove the well-posedness of a nonlinear age-dependent population dynamics model that describes the evolution over time of the limiting distribution of vertex ages in the mean field forest fire model of R\'ath and Toth \cite{RathToth}. This age evolution equation is described below in \S\ref{SS: problem statement}. The mean field forest fire model is a dynamic random graph model that is rigorously shown to exhibit self-organized criticality in its hydrodynamic limit. The population dynamics model is deterministic and displays self-organized criticality. 
 
 We describe the mean field forest fire model below in section~\S\ref{SS: MFFF}. In \cite{CraneRathYeo} we decorate the mean field forest fire model with vertex ages, and we take the initial graph state to be random, distributed as an inhomogeneous random graph whose edge probabilities depend in a certain way on the initial ages of the vertices. This class of of random graphs is natural because it is preserved under the forest fire dynamics. Under these assumptions we show that there is a limiting empirical distribution of vertex ages at all times, as the model size tends to infinity. We also derive in \cite{CraneRathYeo} a system of differential equations satisfied by this limiting age distribution. The purpose of the present paper is to prove that this autonomous system is well-posed. 

 The author would like to thank Bal\'azs R\'ath for his assistance.

\subsection{Problem statement and main results}\label{SS: problem statement} 
% In this section we state the results of the paper, deferring the motivation and background to the next section. 
Denote by $\mathcal{P}([0,\infty))$ the space of Borel probability measures on $[0,\infty)$, by $\mathcal{M}_{1}^{+}([0,\infty))$ the space
 of positive Borel measures on $[0,\infty)$ with finite first moment, and by $\mathcal{P}_{1}^{+}([0,\infty))$ their intersection.
  The Wasserstein $W_1$ distance $W_1(\mu,\nu)$ between two measures $\mu,\nu \in \mathcal{P}([0,\infty))$ is the $L^1$ distance between their cumulative distribution functions, which could be infinite. However, $W_1$ restricts to a complete metric on the subspace $\mathcal{P}_1([0,\infty))$. It is also called the earth-mover's distance or Kantorovich-Rubinstein metric. We discuss $W_1$ in detail in \S\ref{SS: W_1 metric}.

For $\pi \in \mathcal{P}([0,\infty))$ we define a linear integral operator $\mathcal{L}_\pi$ on the Hilbert space $L^2(\pi)$ by
\begin{equation}\label{eq: operator definition} \mathcal{L}_{\pi}f (s) = \int (x \wedge s) f(x)\,d\pi(x)\,.\end{equation}
Here $x \wedge s$ denotes the minimum of $x$ and $s$.

Suppose $\pi \neq \delta_0$ (where $\delta_0$ means the atom of unit mass at $0$). It is shown in \cite{CraneRathYeo} that $\mathcal{L}_\pi$ is a nontrivial positive semidefinite self-adjoint Hilbert-Schmidt operator. In the context of forest fires, $\mathcal{L}_\pi$ is the branching operator of a multitype Galton-Watson process that approximates the local structure of the forest fire graph. We remark that the eigenbasis of $\mathcal{L}_\pi$ also arises in the Kosambi-Karhunen-Lo\`eve expansion of standard Brownian motion (considered as an element of $L^2(\pi)$) as a random series of orthogonal functions with independent Gaussian coefficients, which explains as much of the variance as possible in each finite initial subseries. 

$\mathcal{L}_\pi$ has a simple leading or principal eigenvalue $\lambda > 0$, that is equal to its spectral radius and its operator norm on $L^2(\pi)$. 
$\mathcal{L}_\pi$ has a unique eigenfunction $\theta \in L^2(\pi)$ with eigenvalue $\lambda$, subject to the normalization $$\langle \theta, \mathbf{1}\rangle_\pi = \int \theta(x) \,d\pi(x)  = 1\,.$$ We will abuse notation by letting $\theta$ also denote the continuous increasing function on $[0,\infty)$ defined for $s \in [0,\infty)$ by
\begin{equation}\label{eq: theta e'vec} \lambda \theta(s) = \int (x \wedge s) \theta(x) \,d\pi(x)\,.\end{equation}
 The probability measure $\mu$ defined by $\frac{d\mu}{d\pi}(x) = \theta(x)$ will play an important role. We show in \cite{CraneRathYeo} that $\theta$ is bounded, using the connection to multitype branching processes. Hence $\mu \in \mathcal{P}_1([0,\infty))$.  In this paper we will give a different and self-contained proof that $\theta$ is bounded, which has the advantages of yielding an explicit bound, and shows that $\theta$ is locally uniformly bounded on $\mathcal{P}_1([0,\infty))\setminus\{\delta_0\}$.
 
 We say that $\pi$ is \emph{age-subcritical} if $\lambda < 1$, \emph{age-critical} if $\lambda = 1$ and \emph{age-supercritical} if $\lambda > 1$.  It is shown in \cite{CraneRathYeo} that these conditions correspond to sub-criticality, criticality and supercriticality of the multitype Galton-Watson process whose branching operator is $\mathcal{L}_\pi$.

%We have
%\begin{equation}\label{eq:tempdefnmu}
%\lambda_t \theta_t(s) = \int_0^\infty \theta_t(x)\left(x\wedge s\right) \mathrm{d}\pi_t(x),\;\;\text{for all $s\in[0,\infty]$},
%\end{equation}
%where for $s=\infty$ the integral on the right-hand side is $\lambda_t \lim_{s \to \infty} \theta_t(s)$, which we write as $\lambda_t \theta_t(\infty)$. We will show that $\theta_t(\infty) < \infty$ (still assuming that the mean of $\pi_0$ is finite).

%We define $\mu_t$ to be the probability measure that is absolutely continuous with respect to $\pi_t$ with Radon-Nikodym derivative $$\frac{d\mu_t}{d\pi_t}(s) = \theta_t(s)\,.$$

Our main result is the following well-posedness theorem.
\begin{theorem}\label{T: well-posedness theorem}
 Consider the initial value problem
 \begin{equation}\label{eq: IVP} \begin{rcases} \frac{d}{dt}\pi_t = -\delta_0' \ast \pi_t - \varphi(t)(\mu_t - \delta_0)\\
\displaystyle{\frac{d\mu_t}{d\pi_t}   = \theta_t}\,,\quad \int \theta_t(s) \,d\pi_t(s) = 1\\
\mathcal{L}_{\pi_t} \theta_t  = \lambda_t \theta_t\,,\quad\text{and}\quad \lambda_t = \left\|\mathcal{L}_{\pi_t}\right\|_{L^2\left(\pi_t\right)}\,.\end{rcases}\end{equation} 
Here $t$ ranges over $[0,T]$, $\pi_t$ is understood to take values in $\mathcal{P}([0,\infty))$, and $\varphi: [0,T] \to [0,\infty)$ is understood to be a continuous control function. The meaning of the differential equation is that for every compactly supported and continuously differentiable test function $f: \mathbb{R} \to \mathbb{R}$ we have
\begin{equation}\label{eq: IVP meaning}
\frac{d}{dt} \int\! f(s)\,\mathrm{d}\pi_t(s) =
 \int\! f'(s)\,\mathrm{d}\pi_t(s) - \int\! f(s)\varphi(t)\theta_t(s)\,\mathrm{d}\pi_t(s) + \varphi(t) f(0)\,.
\end{equation}
Let $\pi_0 \in \mathcal{P}_1([0,\infty))$ be age-critical and let $T >0$. Then the system~\eqref{eq: IVP} together with the equation \begin{equation} \label{eq: phi defined by theta}\varphi(t) = \Phi(\pi_t) := \left(\int \theta_t(s)^3 \,d\pi_t(s)\right)^{-1} \end{equation} has a unique solution over $[0,T]$. It satisfies $\lambda_t = 1$ and $\pi_t \in \mathcal{P}_1([0,\infty))$ for all $t$. Over $[0,T]$ the solution $\pi_t$ moves Lipschitz continuously in the Wasserstein $W_1$ metric, and the normalized eigenfunction $\theta_t$ moves Lipschitz continuously in $L^1([0,\infty))$. The dependence of each of $\pi_t, \theta_t$, and $\varphi(t)$ on $\pi_0$ is locally Lipschitz, uniformly over $[0,T]$.
\end{theorem}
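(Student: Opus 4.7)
My plan is a Banach fixed-point argument on the space $X = C([0,T']; (\mathcal{P}_1([0,\infty)), W_1))$ equipped with the supremum $W_1$ metric. The first ingredient is a spectral-regularity lemma for the map $\pi \mapsto (\lambda_\pi, \theta_\pi, \Phi(\pi))$. Because the kernel $(x \wedge s)$ is $1$-Lipschitz in $s$ and dominated by $x$, one obtains a bound on $\|\mathcal{L}_\pi - \mathcal{L}_{\tilde\pi}\|$ in both operator and Hilbert--Schmidt norms in terms of $W_1(\pi, \tilde\pi)$ together with first-moment bounds. On $W_1$-bounded subsets of $\mathcal{P}_1([0,\infty))$ that stay bounded away from $\delta_0$, the principal eigenvalue is simple with a uniform spectral gap, so analytic perturbation theory of Kato type yields local Lipschitz dependence of $\lambda_\pi$ and of the normalized eigenfunction $\theta_\pi$ in $L^2(\pi)$. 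The explicit uniform bound on $\|\theta_\pi\|_\infty$ promised in the introduction then promotes this to local Lipschitz continuity of $\theta_\pi$ as a function in $L^1([0,\infty),\,dx)$ and of $\Phi(\pi)$.

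Next I would recast \eqref{eq: IVP meaning} as a mild equation via the method of characteristics. The weak form describes a transport equation with aging at speed $1$, spatially nonuniform killing at rate $\varphi(t)\theta_t(s)$, and a point source at $s = 0$ of total strength $\varphi(t)$. Integrating along characteristics and applying Duhamel yields, for each bounded continuous $f$,
\[
\int\! f\,d\pi_t \;=\; \int\! f(s+t)\,e^{-\int_0^t \varphi(r)\theta_r(s+r)\,dr}\,d\pi_0(s) \;+\; \int_0^t\! \varphi(u) f(t-u)\,e^{-\int_u^t \varphi(r)\theta_r(r-u)\,dr}\,du.
\]
Combined with $\varphi(t) = \Phi(\pi_t)$ and $\theta_t = \theta_{\pi_t}$ this defines a nonlinear map $\Psi: X \to X$ whose fixed points are exactly the weak solutions of \eqref{eq: IVP}. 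Using Step 1, the survival kernels and source term produced by two inputs agreeing at $t = 0$ differ on $[0, T']$ by $O(T')$ times the sup-$W_1$ distance between the inputs. For sufficiently small $T'$, depending only on the Lipschitz constants from Step 1 on a $W_1$-ball about $\pi_0$, $\Psi$ is therefore a contraction in the sup-$W_1$ metric, yielding unique local solvability and, as an immediate byproduct, local Lipschitz dependence of the solution on $\pi_0$.

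To extend the local solution to $[0, T]$, I verify that criticality $\lambda_t \equiv 1$ is preserved. Differentiating $\lambda_t \theta_t = \mathcal{L}_{\pi_t}\theta_t$ in $t$, pairing against $\theta_t$ in $L^2(\pi_t)$, and using self-adjointness of $\mathcal{L}_{\pi_t}$ yield
\[
\dot\lambda_t \int \theta_t^2\,d\pi_t \;=\; \lambda_t \int \theta_t^2\,d\dot\pi_t.
\]
Substituting~\eqref{eq: IVP meaning} with $f = \theta_t^2$ (justified by approximation using the boundedness of $\theta_t$ and of $\theta_t'$), noting $\theta_t(0) = 0$ directly from~\eqref{eq: theta e'vec}, and applying the identity $\int 2\theta_t \theta_t'\,d\pi_t = 1$ obtained by differentiating~\eqref{eq: theta e'vec} in $s$ and symmetrizing the resulting double integral, this reduces to $\dot\lambda_t\,\|\theta_t\|_{L^2(\pi_t)}^2 = \lambda_t\bigl(1 - \varphi(t)\int \theta_t^3\,d\pi_t\bigr)$. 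The choice $\varphi(t) = \Phi(\pi_t)$ therefore enforces $\dot\lambda_t = 0$ exactly, so criticality is preserved. Since $\delta_0$ is age-subcritical, continuity of $\lambda_\pi$ in $W_1$ forces $\pi_t$ to stay bounded away from $\delta_0$ throughout $[0, T]$, so the constants of Step 1 stay uniform and the contraction can be restarted repeatedly up to time $T$.

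The hardest step, I expect, is making Step 1 quantitative in the norms needed for Step 2: upgrading the natural $L^2(\pi)$ perturbation estimate for $\theta_\pi$ to a $W_1$-to-$L^1(dx)$ Lipschitz estimate for $\theta_\pi$, with constants depending only on an upper bound for the first moment of $\pi$ and a lower bound for the spectral gap at $\lambda_\pi$. A secondary obstacle is turning preserved criticality into a quantitative lower bound on $\inf_{t \in [0,T]} \int \theta_t^3\,d\pi_t$, without which $\varphi(t)$ could blow up inside $[0, T]$ and the local contraction could fail to restart.
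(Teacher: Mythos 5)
Your plan is a genuinely different route from the paper's. The paper does not attempt a self-contained fixed-point construction of the solution: existence and the fact that $\lambda_t \equiv 1$ are inherited from the probabilistic limit theorem of \cite{CraneRathYeo} (Theorem~\ref{thm_convergence}), and the analytic content of Section~\ref{S: main proof} is confined to proving uniqueness and locally Lipschitz dependence of solutions on $\pi_0$ via a Gr\"onwall argument on the difference of two solutions, using the travelling-wave integral form~\eqref{eq: travelling wave integral form}. Your mild/Duhamel formulation is correct and in principle could yield a self-contained existence proof, which would be a genuine improvement; but it forces you to also prove preservation of criticality from scratch, a step the paper never needs.

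There are three points you should repair or rethink. First, the Kato-perturbation step has a structural obstacle you do not address: $\mathcal{L}_\pi$ acts on $L^2(\pi)$, which is a different Hilbert space for each $\pi$, and Kato-type analytic perturbation theory presupposes a fixed space. You would need an explicit transfer mechanism (e.g.\ pass to a fixed reference measure, or work throughout with the canonical function representative of $\theta$), and the spectral gap would need a uniform quantitative lower bound that is nowhere established. The paper avoids this entirely: using the normalization $\int\theta\,d\pi=1$ it rewrites the eigenvalue equation as the Volterra identity $\lambda\theta(x)=x-\int_0^x\theta(y)(x-y)\,d\pi(y)$ and deduces the $W_1$-to-$L^\infty$ Lipschitz continuity of $\pi\mapsto\theta$ by a Gr\"onwall inequality (Lemma~\ref{L: theta over x is Lip wrt pi}, Proposition~\ref{P: theta cts wrt pi}), with no spectral-gap input at all. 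Second, the Hellmann--Feynman computation of $\dot\lambda_t$ is formal. To make it rigorous you need $\lambda_t$ to be absolutely continuous in $t$ \emph{before} knowing it is constant, you need to justify using $f=\theta_t^2$ (bounded but not in $C_0^1$, and with kinks at atoms of $\pi_t$) as a test function, and the identity $\int 2\theta_t\theta_t'\,d\pi_t = 1/\lambda_t$ requires choosing the symmetric representative $\tfrac12(\theta_{l}'+\theta_r')$ at atoms; otherwise a diagonal term $\sum_a\theta(a)^2\pi(\{a\})^2$ survives. None of this is fatal, but it is substantially more work than your one-paragraph sketch suggests, and it is exactly the work the paper sidesteps by inheriting $\lambda_t\equiv 1$ from the stochastic model.

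Third, your ``secondary obstacle'' is not an obstacle at all: Jensen's inequality applied to the normalization $\int\theta_t\,d\pi_t=1$ immediately gives $\int\theta_t^3\,d\pi_t\ge 1$, hence $\varphi(t)=\Phi(\pi_t)\le 1$ uniformly in $t$; this is exactly the first line of the paper's Lemma~\ref{L: phi Lipschitz wrt pi}. The quantity whose control is genuinely hard is the opposite one: keeping $\pi_t$ away from $\delta_0$ and $\|\theta_t\|_\infty$ finite, which the paper handles via the explicit tail bound of Lemma~\ref{L: explicit theta bound} and the decay estimate of Lemma~\ref{L: limited decay}. You should redirect your effort to that bound; the rest of your Step~1 (Lipschitz estimate for $\pi\mapsto\theta$) is the real crux, and the paper's Volterra/Gr\"onwall trick is a more elementary and robust way to get it than perturbation theory of a moving Hilbert space.
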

The proof of Theorem~\ref{T: well-posedness theorem} occupies Section~\ref{S: main proof}. The existence statement in Theorem~\ref{T: well-posedness theorem} is already implied by the limit theorems in \cite{CraneRathYeo} that we will describe in~\S\ref{SS: MFFF}. The uniqueness and continuous dependence on initial conditions are proved using a Gr\"onwall argument, which relies on proving the local Lipschitz dependence of $\lambda$ and $\theta$ on $\pi$.

\subsection{Background: the mean field forest fire model}\label{SS: MFFF}
 The mean field forest fire model (MFFF) is defined by modifying the dynamical Erd\H{o}s-R\'enyi random graph process on $n$ vertices by the addition of a Poisson rain of lightning strikes, striking each vertex independently at rate $\lambda(n)$, where $\lambda(n) \to 0$ but $n\lambda(n) \to \infty$ as $n \to \infty$. When lightning strikes a vertex, all edges in the connected component (cluster) of that vertex are instantaneously deleted, but the vertices survive and continue to form new edges as in the Erd\H{o}s-R\'enyi dynamics. Then \cite{RathToth} shows that the model displays \emph{self-organized criticality} in the limit $n \to \infty$.
  This means that its stochastic dynamics drive it into a state where the cluster size distribution exhibits polynomial decay, after which it remains in such a critical state. However, a giant component never forms, which is to say that the model never becomes supercritical. 
  
 The cluster size distribution $\left(v_k^n(t)\right)_{k=1}^\infty$ expresses the proportion of vertices in the model at time $t$ that belong to clusters of size $k$. The sequence $\left(v_k^n(t)\right)_{k=1}^\infty$ is a Markov process on its own, a modification of the Marcus-Lushnikov coagulation process with multiplicative kernel. Norris \cite{Norris} showed that the Marcus-Lushnikov process with multiplicative kernel has the modified Smoluchowski (or Flory) equation as its hydrodynamic limit. R\'ath and T\'oth \cite{RathToth} considered a sequence of MFFF processes where $n \to \infty$ and $v_k^n(0) \to v_k(0)$ as $n \to \infty$ for each $k \ge 1$. They showed that $v_k^n(t) \to v_k(t)$ in probability, where the limit $v_k(t)$ is deterministic, and the vector $\left(v_k(t)\right)_{k=1}^\infty$ satisfies the \emph{critical forest fire equations}. This is a coupled system of infinitely many ODEs, similar to the Flory equations with multiplicative kernel:
 $$ \frac{d}{dt} v_k(t) = \mathbf{1}_{(k=1)} \varphi(t) - k v_k(t) + \sum_{i + j = k} i v_i(t)v_j(t) \quad \text{ for each $k \ge 1$,} $$ 
 $$ \sum_{k=1}^\infty v_k(t) = 1\,.$$
By means of the Laplace transform the critical forest fire equations may be expressed as a controlled inviscid Burgers equation. A crucial role is played by the \emph{control function} $\varphi(t)$. This function represents the limiting rate (per vertex) at which vertices are burned and hence is also the limiting birth rate of singleton vertices, in the limit $n \to \infty$. $\varphi(t)$ is only determined implicitly by the critical forest fire equations. R\'ath and T\'oth assume that $\sum_{k=1}^\infty k^3 v_k(0) < \infty$, which implies the initial configuration is subcritical. They prove that the critical forest fire equations have a unique solution, and describe it as follows.  There is a \emph{gelation time} $t_{gel}  = \left(\sum_{k=1}^\infty k v_k(0)\right)^{-1}$, such that $\varphi(t) = 0$ for $t < t_{gel}$, but $\varphi(\cdot)$ is strictly positive and continuous on $[t_{gel}, \infty)$.
Moreover, for $t \ge t_{gel}$, 
\begin{equation}\label{eq: phi defined by tail} \sqrt{\frac{2\varphi(t)}{\pi}} = \lim_{m \to \infty} m^{1/2} \sum_{k=m}^\infty v_k(t)\,.\end{equation}
 In \cite{CraneRathYeo} it is noted that the technical results of \cite{RathToth} may be extended to cover some cases where the limiting initial configuration is critical, subject to an analytic condition on the behaviour near $z = 1$ of the probability generating function $ \sum_{k=1}^\infty v_k(0) z^k$. In this case $t_{gel} = 0$.

\cite{CraneFreemanToth} studies the stochastic process of the cluster size of a tagged vertex in the MFFF model, proving that in the limit $n \to \infty$ this tagged cluster size becomes Markovian on its own, with a deterministic time-dependent generator that is given in terms of the solution of the critical forest fire equations. In other words, the system exhibits \emph{propagation of chaos}. The limiting probability that a vertex survives unburned from time $t_1$ until a later time $t_2$, either unconditionally or conditioned on its cluster size at time $t_1$, is expressed in terms of characteristic curves of the controlled Burgers equation.

\subsection{Limiting age evolution in the mean field forest fire model} 

The limiting age evolution equations \eqref{eq: IVP} + 
\eqref{eq: phi defined by theta} are derived in \cite{CraneRathYeo} for the \emph{mean field forest fire with ages}, (MFFFA).

 To explain the model, we first define an \emph{age-driven inhomogeneous random graph} on the vertex set $\{1, \dots, n\}$, where each vertex $v$ is labelled with an age $a(v)$. The ages could be deterministic or random. Conditional on the ages, for each pair of distinct vertices $v,w$, independently, an edge joins $v$ and $w$ with probability $1 - \exp(-(a(v) \wedge a(w))/n)$. This is a special case of the inhomogeneous random graph (IRG) studied by Bollob\'as, Janson and Riordan~\cite{BollobasJansonRiordan}.

The model $\textup{MFFFA}(n, \underline{a}_0^n, \lambda)$ is a stochastic process $\mathcal{G}_t^n$ taking values in the set of graphs on the vertex set $\{1, \dots, n\}$ with vertices labelled by ages: at time $t \in [0,\infty)$ each vertex $v$ has age $a_t^n(v) \in [0,\infty)$. The vector of $n$ time-dependent ages is denoted $\underline{a}_t^n$. The initial age vector $\underline{a}_0^n$ may be random. The initial graph $\mathcal{G}_0^n$ is a sample of the age-driven inhomogenous random graph with ages $\underline{a}_0^n$. The dynamics of the model are as follows. The age of each vertex increases at rate 1. Lightning strikes each vertex independently at rate $\lambda$. When a cluster is struck by lightning, all of its edges are immediately deleted and the age of each of its vertices is immediately reset to $0$.  

The dynamics of the MFFFA model preserves the class of mixtures of age-driven inhomogeneous random graphs. More precisely, for any fixed time $t$, if we condition the model on the vector $\underline{a}_s^n$ of vertex ages over all times $s \in[0,t]$, then the graph state of the model at time $t$ is conditionally distributed as the age-driven inhomogeneous random graph driven by the ages $\underline{a}_t^n$. This property is used in \cite{CraneRathYeo} to show that $\underline{a}_t^n$ is a Markov process on its own. 

We do not insist that the initial ages are independent, since the independence of the vertex ages is not preserved by the forest fire dynamics. However, exchangeability is preserved by the dynamics, so it is quite natural to let the initial ages be exchangeable. In both \cite{RathToth} and \cite{CraneRathYeo} the most important example is the \emph{monodisperse} initial condition, in which there are no edges at time $0$ and all $n$ vertices initially have age $0$. 
 
The first main result of \cite{CraneRathYeo} gives conditions under which the empirical age distribution at each time $t > 0$ converges in probability (with respect to the topology of weak convergence) to a deterministic probability measure $\pi_t$, as the size of the model tends to infinity. 
%We state this using the L\'evy distance on the space of Borel probability measures on $[0,\infty)$. For measures $\mu$, $\nu$, with cumulative distribution functions $F_\mu$, $F_\nu$, the L\'evy distance is defined by
%$$L(\mu, \nu) := \inf \{ \epsilon > 0 \,:\, F_\mu(x-\epsilon) - \epsilon < F_\nu(x) < F_\mu(x+\epsilon) + \epsilon \;\text{for all }\; x \in \mathbb{R}\}\,.$$
%The L\'evy distance is a metric that metrizes the topology of weak convergence on $[0,\infty)$ (against bounded continuous functions). The following theorem summarizes some of the main results of \cite{CraneRathYeo}.
\begin{theorem}[{\cite{CraneRathYeo}}]\label{thm_convergence}
Let $(\G^n_t,\,t\in[0,t_{\max}])$ be a family of $\mathrm{MFFFA}(n,\underline{a}^n_0,\lambda(n))$ processes, with lightning rate satisfying $\lambda(n) \to 0$ and $n\lambda(n) \to \infty$ as $n \to \infty$.
Suppose that the initial (random) empirical age measures satisfy $\pi^n_0 \stackrel{\mathbb{P}}{\Rightarrow} \pi_0$, where $\pi_0$ is a deterministic probability measure on $[0,\infty)$ with finite mean that is either age-critical or age-subcritical. Then there exists a deterministic
family of probability measures $(\pi_t)_{0 \leq t \leq t_{\max}}$ on $[0,\infty)$, depending continuously on $t$ with respect to the topology of weak convergence, such that
$ \pi^n_t  \stackrel{\mathbb{P}}{\Rightarrow} \pi_t$ as $n \to \infty$,
where the convergence in probability is with respect to the topology of weak convergence. There is a deterministic finite gelation time $t_{gel}$ which is $0$ if $\pi_0$ is age-critical, but positive if $\pi_0$ is age-subcritical. For $0 \le t < t_{gel}$, $\pi_t$ satisfies the transport equation 
$$\frac{d}{dt} \pi_t = -\delta_0' \ast \pi_t\,,$$ and  $\pi_t$ remains age-subcritical. For $t \ge t_{gel}$, $\pi_t$ is age-critical and satisfies the system \eqref{eq: IVP} \textup{+} \eqref{eq: phi defined by theta}.
\end{theorem}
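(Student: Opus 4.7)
The plan is to exploit the preservation of the age-driven IRG class under the MFFFA dynamics (noted in the description of the model), which makes the age vector $\underline{a}_t^n$ and therefore the empirical measure $\pi_t^n = \frac{1}{n}\sum_v \delta_{a_t^n(v)}$ a Markov process in its own right. I would follow the standard tightness plus identification-of-limit plus uniqueness scheme for measure-valued Markov processes, with uniqueness of the limit at the PDE level provided by Theorem~\ref{T: well-posedness theorem} of the present paper.

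For the identification step, I would compute the generator of $\pi_t^n$ applied to a test functional $\pi\mapsto\langle f,\pi\rangle$ with $f\in C_c^1([0,\infty))$. Aging contributes a drift $\langle f',\pi_t^n\rangle$, while a lightning strike on vertex $v$ resets the ages of the whole cluster $C_t^n(v)$ to $0$, producing the burning contribution
\[ \frac{\lambda(n)}{n}\sum_{v=1}^n\sum_{w\in C_t^n(v)}\bigl(f(0)-f(a_t^n(w))\bigr) = \lambda(n)\sum_{\text{clusters }C}|C|\cdot\frac{1}{n}\sum_{w\in C}\bigl(f(0)-f(a_t^n(w))\bigr).\]
To pass to the limit I would invoke the local weak convergence of the age-driven IRG with profile $\pi_t$ to the multitype Galton-Watson tree whose branching operator is $\mathcal{L}_{\pi_t}$ (which is how $\mathcal{L}_\pi$ arises in the first place). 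Combined with a Smoluchowski-style analysis of the multiplicative coalescent with instantaneous burning in the doubly critical regime $\lambda(n)\to 0$, $n\lambda(n)\to\infty$ (parallel to the treatment in \cite{RathToth,CraneFreemanToth}), this should identify the effective per-vertex burn rate of a vertex of age $s$ as $\varphi(t)\theta_t(s)$, with $\varphi(t)$ given by a tail formula analogous to \eqref{eq: phi defined by tail}; the self-consistency $\lambda_t\equiv 1$ then forces \eqref{eq: phi defined by theta}. The limiting generator reproduces the weak form \eqref{eq: IVP meaning}.

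Tightness of $(\pi_\cdot^n)$ in $D([0,t_{\max}],\mathcal{P}([0,\infty)))$ follows from the uniform first-moment bound $\int s\,d\pi_t^n(s)\le\int s\,d\pi_0^n(s)+t_{\max}$ (ageing adds at most $t_{\max}$ per vertex, burning only decreases ages) together with martingale quadratic-variation estimates that are $o(1)$ because $\lambda(n)\to 0$. The gelation time $t_{gel}$ is identified as the first $t$ at which the spectral radius of $\mathcal{L}_{\pi_t}$ along the pure-transport flow reaches $1$; this is positive for age-subcritical $\pi_0$ and zero for age-critical $\pi_0$. For $t\ge t_{gel}$, Theorem~\ref{T: well-posedness theorem} pins down the unique solution to \eqref{eq: IVP}--\eqref{eq: phi defined by theta}, so all subsequential limits coincide and convergence in probability follows. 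The main technical obstacle is the passage to the limit in the burning term: rigorously identifying the effective rate $\varphi(t)\theta_t(s)$ demands quantitative control of the mesoscopic cluster structure at near-criticality and a careful coupling between the lightning Poisson processes and the cluster dynamics, so that individual clusters are large enough for $\lambda(n)|C|$ to be nontrivial yet no single strike contributes a macroscopic mass to $\pi_t^n$. This delicate balance is precisely the self-organized criticality of the MFFFA, and verifying it rigorously is the heart of the convergence proof carried out in \cite{CraneRathYeo}.
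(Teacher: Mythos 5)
This statement is not proved in the present paper at all: it is Theorem~\ref{thm_convergence} of \cite{CraneRathYeo}, quoted here as an external input (it is what supplies the \emph{existence} half of Theorem~\ref{T: well-posedness theorem}). The paper only gives a heuristic sketch of the method of \cite{CraneRathYeo} -- conditioning on the ages to get an age-driven inhomogeneous random graph, approximating its local structure by the multitype branching process with branching operator $\mathcal{L}_{\pi_t}$, identifying the age profile of burned vertices with the tilted measure $\mu_t$, and pinning down $\varphi(t)$ by analysing the singularity of the total-progeny generating function and comparing with the tail formula \eqref{eq: phi defined by tail} of \cite{RathToth}. Your outline is broadly consistent with that sketch, but it is an outline, not a proof.

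The genuine gap is that the step which constitutes essentially the entire content of the theorem -- showing that in the regime $\lambda(n)\to 0$, $n\lambda(n)\to\infty$ the burning contribution to the evolution of $\pi^n_t$ converges to $-\varphi(t)(\mu_t-\delta_0)$ with $\varphi(t)=\Phi(\pi_t)$ -- is named by you as ``the main technical obstacle'' and then deferred back to \cite{CraneRathYeo}; nothing in your argument produces the factor $\theta_t(s)$ or the normalisation \eqref{eq: phi defined by theta}, and local weak convergence alone cannot, since the burning is driven by the mesoscopic clusters that local limits do not see. Relatedly, your tightness claim is not correct as stated: the quadratic variation of $\langle f,\pi^n_t\rangle$ is of order $\lambda(n)n^{-2}\sum_C|C|^3$, so it is \emph{not} $o(1)$ merely because $\lambda(n)\to 0$; one needs exactly the self-organised-criticality control of cluster sizes that is missing. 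Two further points deserve care: invoking Theorem~\ref{T: well-posedness theorem} for uniqueness of subsequential limits is not circular within this paper (its uniqueness proof does not use Theorem~\ref{thm_convergence}), but it reverses the paper's logical order and could not have been the route of \cite{CraneRathYeo}, and in any case it only helps after you have verified that every subsequential limit solves \eqref{eq: IVP}$+$\eqref{eq: phi defined by theta} -- again the unproved identification step; and the assertions about $t_{gel}$ (positivity in the age-subcritical case, pure transport and persistence of subcriticality before gelation, age-criticality afterwards) are stated but not derived.
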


In the case where the initial age distribution $\pi_0$ is age-subcritical with finite mean, the limiting cluster size distribution exists and has an exponential tail, so it can be used as the limiting initial cluster size distribution $\left(v_k(0)\right)_{k=1}^\infty$ in the main theorem of R\'ath and T\'oth \cite{RathToth}. That theorem determines $t_{gel}$ to be equal to $\left(\sum_{k=1}^\infty kv_k(0)\right)^{-1}$. The function $\varphi:[t_{\mathrm{gel}},\infty) \to (0,\infty)$ defined by equation \eqref{eq: phi defined by theta} coincides with the \emph{control function} $\varphi$ in the statement of R\'ath and T\'oth's main theorem. 

To explain why the age evolution equation takes the form \eqref{eq: IVP} + \eqref{eq: phi defined by theta}, we give a brief sketch of the method of proof in \cite{CraneRathYeo}. The central observation is that conditioned on the ages of the vertices at time $t$, the MFFFA graph is an age-driven inhomogeneous random graph.  The local structure of this graph is well-approximated by a multitype Poisson branching process, whose branching operator is $\mathcal{L}_{\pi_t}$. In this branching process, the offspring of an individual of age (type) $x$ are an almost surely finite set of individuals, whose ages are the points of a Poisson point process with intensity $(x \wedge y)\,d\pi_t(y)$. Because $\theta_t$ is the principal eigenfunction of the branching operator, the tilted measure $\mu_t$ approximates the distribution of ages in very large clusters. Very large clusters account for nearly all of the burning vertices after the gelation time. The quantity $\varphi(t)$ is the limiting total rate of burning (per vertex), so the term $-\varphi(t)\mu_t$ approximates the rate of change in the empirical age distribution due to the removal of burning vertices. The term $\varphi(t)\delta_0$ corresponds to the fact that all vertices burned at time $t$ survive but have their age reset to zero. 

It is shown in \cite{CraneRathYeo} by careful analysis of the singularity at $1$ of the generating function for the total progeny of the multitype branching process, and comparison with equation~\eqref{eq: phi defined by tail}, that the control function $\varphi(\cdot)$ that describes the limiting burning rate in the MFFFA model is described by equation~\eqref{eq: phi defined by theta}. A different (though heuristic) method to obtain~\eqref{eq: phi defined by theta} is to analyze how quickly the giant component of $\mathcal{G}_t$ would begin to grow if the lightning process were switched off at time $t$. (See \cite[Theorem 3.17]{BollobasJansonRiordan} which determines this growth rate.) 
%Later in the present paper we will give a third explanation of~\eqref{eq: phi defined by theta} by finding an expression for the rate of change of $\lambda_t$ for any solution of~\eqref{eq: IVP} with a given control function $\varphi$.

In \cite{CraneRathYeo} it is not shown that the autonomous system describing the age evolution after $t_{gel}$ is well-posed.  However, the results summarized in Theorem~\ref{thm_convergence} do already prove the existence of a solution to $\eqref{eq: IVP} + \eqref{eq: phi defined by theta}$ satisfying $\lambda_t=1$, assuming $\pi_0$ is age-critical with finite mean. 

\subsection{Relation to standard population dynamics models}

 The age-evolution equation is superficially related to the well-known Lotka--Sharpe--McKendrick demographic model, a linear partial differential equation that describes the limiting age distribution for a population in which individuals age at rate 1, die at an age-dependent rate, and reproduce at an age-dependent rate. Like the Lotka-Sharpe-McKendrick model, our age-evolution equation has a birth term, a death term and a transport term. However, it is nonlinear. The coefficients depend in a nonlinear way on the age distribution, but not solely through the total population size as in the nonlinear Gurtin--MacCamy demographic model. In a general age-dependent population dynamics model, the \emph{age-specific mortality modulus} $m(\cdot)$ is the function that specifies the death rate $m(a)$ for individuals of age $a$. In our equation, $m(a) = \varphi(t)\theta_t(a)$.  On its own, $\theta_t$ is a nonlinear function of $\pi_t$. Equation~\eqref{eq: phi defined by theta} together with the normalization $\int \theta_t(x) \,d\pi_t(x) = 1$ causes $m(a)$ to be normalized in the following simple but nonlinear fashion:
  $$ \int m(a)^3\,d\pi(a) = \left(\int m(a)\,d\pi(a)\right)^2\,.$$
The birth rate $\varphi(t)$ equals the total death rate, so that the population size stays fixed, i.e. $\pi$ remains a probability measure.  Although the system is autonomous, involving no boundary conditions, we show that its solutions nevertheless satisfy the boundary condition that the operator $\mathcal{L}_\pi$ remains critical, i.e. $\lambda_t = 1$ for all $t \in [0,T]$. In this sense the system \eqref{eq: IVP} + \eqref{eq: phi defined by theta} exhibits self-organized criticality.

 In the monograph of Webb \cite{Webb}, well-posedness is proven for a very general class of nonlinear age-dependent population dynamics models, under the expected technical assumptions that the age-dependent net emigration (or mortality) rate and the birth rate are locally Lipschitz mappings from $L^1([0,\infty))$ to $L^1(0,\infty))$ and to $[0,\infty)$ respectively. However, the differential equation that we study in this paper is not covered by that general theorem, since the solutions that we must consider take values in the space of non-negative Borel probability measures on $[0,\infty)$ that have finite first moment. The general equation considered in \cite{Webb} deals only with age distributions that are absolutely continuous with respect to Lebesgue measure, so it is phrased in terms of densities taking values in $L^1([0,\infty))$.  Part of the novelty of the present paper is that we work in the larger space of non-negative Borel probability measures. 
 
\subsection{Choice of topology}
 Since the age evolution equation that we consider conserves total population size and preserves the finiteness of the mean age, it is natural to work with a probability metric defined on the space $\mathcal{P}_1([0,\infty))$ of Borel probability measures on $[0,\infty)$ that have finite mean. The $L^1$ norm used in \cite{Webb} would correspond to the total variation distance, but that metric is not appropriate for our problem, since the translation group does not act continuously on the space of Borel probability measures on $\mathbb{R}$ with respect to total variation distance. (It does act continuously on $L^1(\mathbb{R})$.)  We choose instead to use the Wasserstein $W_1$-distance because it is convenient to work with, the solutions of our system move at finite speed with respect to $W_1$, (as we shall prove in Corollary~\ref{Cor: speed limit}), and the continuous dependence of solutions on initial conditions with respect to $W_1$ is meaningful in terms of the forest fire model. It expresses the property of the MFFFA that if we perturb the initial age of each vertex by an amount that is small on average over all the vertices, (and resample the initial graph), then with high probability this causes only small changes in the empirical distribution of ages at later times. 
 
 For a sequence $\pi_n \in \mathcal{P}_1([0,\infty))$,  we have $W_1(\pi_n, \pi) \to 0$ as $n \to \infty$ if and only if $\int x\,d\pi_n(x) \to \int x\,d\pi(x)$ and $\pi_n \to \pi$ in the topology of weak convergence (which means that for every bounded continuous $f: [0,\infty) \to \mathbb{R}$ we have $\int f \,d\pi_n \to \int f \,d\pi$). Thus $\mathcal{P}_1([0,\infty))$ is a closed subspace of $\mathcal{P}([0,\infty))$ with the $W_1$-topology.
 
Note that Theorem~\ref{T: well-posedness theorem} does not state that $\pi_t$ depends continuously on $\pi_0$ with respect to the topology of weak convergence on $\mathcal{P}_1([0,\infty))$. We do not know whether this is true; it does not follow from Theorem~\ref{T: well-posedness theorem}. We will see by an example below that even after restricting to the age-critical subset of $\mathcal{P}_1([0,\infty))$, the first moment $\int x \,d\pi(x)$ is not a continuous functional of $\pi$ with respect to the weak topology. In fact $\mathcal{P}_1([0,\infty))$ is not a closed subspace of $\mathcal{P}([0,\infty))$ with the weak topology.
%Inside any neighborhood in the weak topology of an age-critical $\pi \in \mathcal{P}_1([0,\infty))$, one can in fact find a sequence of age-critical measures which converges weakly to a probability measure $\pi$ for which ~\eqref{eq: operator definition} does not define a  Hilbert-Schmidt operator on $L^2(\pi)$ (and in particular $\pi$ does not have finite mean).  So proving well-posedness with respect to the topology of weak convergence would appear to necessitate dealing with initial conditions $\pi$ beyond the scope of Theorem~\ref{thm_convergence}. 

An apparent obstacle to adapting our proof of Theorem~\ref{T: well-posedness theorem} to work with a metric that metrizes the weak topology, instead of $W_1$, is that the leading eigenvalue of $\mathcal{L}_\pi$ is not a continuous function with respect to the topology of weak convergence on $\mathcal{P}_1([0,\infty))$. Consider the measure $\pi^p := (1-p) \delta_0 + p \delta_{1/p}$, for $p \in (0,1]$. This measure has first moment $1$ and it is age-critical, with $\theta(x) = x \wedge (1/p)$. As $p \to 0$, $\pi^p$ converges weakly to $\delta_0$, which is age-subcritical with $\lambda = 0$. Taking $\pi_0 = \pi^p$ we obtain $\varphi(0) = \Phi(\pi^p) = p^2$ and the initial age-specific mortality rate of the individuals of age $1/p$ is $p$. 
However, $\pi$ does not converge in $W_1$ as $p \to \infty$. In fact $W_1(\pi^p, \pi^q) = 2\left( 1- \frac{p \wedge q}{p \vee q}\right)$,
 demonstrating that $\mathcal{P}_1([0,\infty))$ is not compact in the $W_1$-topology. 
When $p$ is very close to $0$, the solution of \eqref{eq: IVP} + \eqref{eq: phi defined by theta} with initial condition $\pi_0 = \pi^p$ has $\varphi(t)$ very close to $0$ until just before time $1$, so that $\pi_t$ is well approximated by the simple transport equation almost up to time $1$. Around time $1$, $\varphi(t)$ increases quickly to be close to $1$. In the topology of weak convergence, this sequence of solutions converges to the evolution of the limiting age distribution for the monodisperse initial condition $\pi_0 = \delta_0$ in the MFFFA model. That limit obeys the transport equation up to time 1, (its gelation time), and subsequently satisfies \eqref{eq: IVP} + \eqref{eq: phi defined by theta}. 

A second obstacle is that even within the set of age-critical probability measures with mean bounded by $2$, say, the functional $\Phi(\pi)$ is not continuous with respect to the topology of weak convergence, as we can see by considering the probability measure $\pi_n$ supported on $\{0, 1, n^2\}$  such that $\pi_n(\{1\}) = 1-1/n$, $\pi_n(\{n^2\}) = 1/(n^2+n-1)$.  Each $\pi_n$ is age-critical and as $n \to \infty$ we have $\int x \,d\pi_n(x) \nearrow 2$, but $\pi_n \to \delta_1$ in the weak topology. So $\pi_n \not\to \delta_1$ in the $W_1$-topology. Writing $\theta_n$ for the normalized leading eigenfunction of $\mathcal{L}_{\pi_n}$, we have $\theta_n(1) =1$ and $\theta_n(n^2) = n + 1 - 1/n$. Thus $\Phi(\pi_n) \sim 1/n$ as $n \to \infty$, even though $\Phi(\delta_1) = 1$. 

These examples leave open the possibility that looking only at $\pi_t$ and not $\lambda_t, \theta_t$ or $\varphi(t)$, the limiting age evolution identified in Theorem~\ref{thm_convergence} may display continuous dependence on initial condition $\pi_0$ with respect to the topology of weak convergence on the space of age-critical or age-subcritical Borel probability measures on $[0,\infty)$ with positive finite mean.

%At the expense of making the system appear non-autonomous, the age evolution equation with a fixed initial age distribution may be expressed purely in terms of the birth rate and the cohort survival function, which we will see are both $C^1$ functions. If we were only concerned with proving uniqueness of these two functions, we might avoid using $W_1$ and use the $L^1$ or $L^\infty$ distance on these two functions instead. However, this description does not allow one to study the dependence on the initial age distribution.

\subsection{Related work on frozen percolation}
The mean-field frozen percolation model, introduced in R\'ath \cite{Rath} is very similar to the MFFF model, with the difference that when a cluster is struck by lightning its vertices are \emph{frozen}, meaning that they are no longer able to add new edges. Edges arrive only between unfrozen vertices. Using methods similar to those in \cite{RathToth}, R\'ath showed that for the asymptotic r\'egime of lightning rates $\lambda(n) \to 0$, $n \lambda(n) \to \infty$ the frozen percolation model exhibits self-organized criticality in the limit, and the vector of proportions of cluster sizes among the unfrozen vertices has a limit that satisfies the Smoluchowski/Flory coagulation equations with multiplicative kernel. 
 
 Yeo \cite{Yeo} studies frozen percolation starting from an inhomogeneous random graph with finitely many distinct types, defined by a reasonably general $k$ by $k$ kernel. Yeo \cite{Yeo} builds on R\'ath's results by studying the evolution of proportions of the $k$ types among the frozen and unfrozen vertices, showing that there is a deterministic limiting flow as the model size tends to infinity. This flow is the solution of a finite-dimensional system, of a similar nature to the system that we study in the present paper. It is driven by the leading eigenvector of a Perron-Frobenius matrix, scaled so as to keep the leading eigenvalue equal to $1$.  Yeo shows existence and uniqueness of solutions to this system. Yeo also gives a direct construction of a solution of the Smoluchowski equations from the type flow, using the total progeny distribution of the corresponding $k$-type branching processes. In the present paper we do not give a direct proof that the total progeny distribution of the multitype branching process with branching operator $\mathcal{L}_{\pi_t}$ furnishes a solution of the critical forest fire equations whenever $\pi_0$ is age-critical with finite mean and $\pi_t$ solves \eqref{eq: IVP} + \eqref{eq: phi defined by theta}. However this follows from the fact that the solution is unique and arises as the limit of MFFFA, using the stochastic limit theorems proven for MFFF in \cite{RathToth} and for MFFFA in\cite{CraneRathYeo}.  

\subsection{Open question: stability} 
 The critical forest fire equations have a unique fixed point. It is not difficult to show that the system \eqref{eq: IVP} + \eqref{eq: phi defined by theta} also has a unique constant solution $\pi_{\textup{fix}}$, which has density $\frac{1}{2}\mathrm{sech}^2(x/2)$ with respect to Lebesgue measure. The corresponding constant total burning rate and birth rate is $\varphi = \frac{1}{2}$ and the normalized leading eigenfunction of $\mathcal{L}_\pi$ is $\theta(x) = 2\tanh(x/2)$. A curious fact about the constant solution is that the age-specific mortality modulus for an individual is equal to its quantile in the age distribution. 

 It is noted in \cite{CraneFreemanToth} that for $t > t_{gel}$, a consequence of the critical forest fire equations is that 
$$ \varphi(t) - \frac{1}{2} = \frac{d}{dt} \sum_{k=1}^\infty \frac{1}{k} v_k(t)\,,$$ so that the long-time average value of $\varphi$ is $1/2$, for any solution. However, it is not known whether the unique fixed point of the critical forest fire equations is either locally or globally attractive.  Restricting attention to cluster size distributions that arise as local limits of age-driven inhomogeneous random graphs, we may instead ask about the local or global attractiveness of $\pi_{\textup{fix}}$ for the system~\eqref{eq: IVP} + \eqref{eq: phi defined by theta}. Our hope is that it may be easier to find a Liapounov function for the age evolution equations than for the critical forest fire equations.  We tentatively conjecture on the basis of numerical experiments that $W_1(\cdot, \pi_{\textup{fix}})$ is a global Liapounov function for the system~\eqref{eq: IVP} + \eqref{eq: phi defined by theta}.
 
\section{Proof of Theorem~\ref{T: well-posedness theorem}}\label{S: main proof}

\subsection{Alternative formulation of the differential equation}
Since equation~\eqref{eq: IVP} is a modification of the transport equation, it will be convenient to use test functions that are travelling waves. 
\begin{lemma}\label{L: travelling wave 1}
Equation~\eqref{eq: IVP meaning} implies that for all $f \in C_0^1(\mathbb{R})$ we have
\begin{equation}\label{eq: wave test function}
\frac{d}{dt} \int f(s-t)\,d\pi_t(s) =
 -\int f(s-t)\varphi(t)\theta_t(s)\,d\pi_t(s) + \varphi(t) f(-t)\,.
\end{equation}
\end{lemma}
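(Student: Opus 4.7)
My plan is to substitute the time-dependent test function $g_t(s) := f(s-t)$ into the weak formulation \eqref{eq: IVP meaning}, while carefully accounting for the time-dependence of the test function itself. Heuristically, the transport term $\int g_t'(s)\,d\pi_t(s) = \int f'(s-t)\,d\pi_t(s)$ on the right-hand side of \eqref{eq: IVP meaning} is exactly cancelled by the $t$-derivative that arises from differentiating $g_t$ with respect to $t$, which contributes $-\int f'(s-t)\,d\pi_t(s)$. Once these cancel, only the death term $-\varphi(t)\int f(s-t)\theta_t(s)\,d\pi_t(s)$ and the birth term $\varphi(t) g_t(0) = \varphi(t) f(-t)$ remain, giving \eqref{eq: wave test function}.

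To justify this cancellation rigorously, I would work from the difference quotient. Fix $t \in [0,T]$ and for $s$ near $t$ decompose
\begin{equation*}
\int f(u-t)\,d\pi_t(u) - \int f(u-s)\,d\pi_s(u) = \underbrace{\int\bigl[f(u-t)-f(u-s)\bigr]\,d\pi_t(u)}_{(A)} + \underbrace{\int f(u-s)\,d\pi_t(u) - \int f(u-s)\,d\pi_s(u)}_{(B)}.
\end{equation*}
For $(A)$ I write $f(u-t) - f(u-s) = -\int_s^t f'(u-\tau)\,d\tau$, so dividing by $t-s$ and using boundedness of $f'$ together with dominated convergence gives $-\int f'(u-t)\,d\pi_t(u)$ in the limit $s \to t$. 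For $(B)$ I fix $s$ and apply \eqref{eq: IVP meaning} with the time-independent compactly supported $C^1$ test function $h(u) := f(u-s)$, which yields that $(B)$ equals $\int_s^t\bigl[\int h'(u)\,d\pi_\tau(u) - \varphi(\tau)\int h(u)\theta_\tau(u)\,d\pi_\tau(u) + \varphi(\tau)h(0)\bigr]d\tau$. Dividing by $t-s$ and letting $s \to t$, and using continuity in $t$ of $\pi_t$, $\theta_t$ and $\varphi(t)$ (which are guaranteed by Theorem~\ref{T: well-posedness theorem} since we are considering solutions of the system), gives the three terms $\int f'(u-t)\,d\pi_t(u) - \varphi(t)\int f(u-t)\theta_t(u)\,d\pi_t(u) + \varphi(t)f(-t)$.

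Adding the limits of $(A)/(t-s)$ and $(B)/(t-s)$, the $\int f'(u-t)\,d\pi_t(u)$ contributions cancel, producing the right-hand side of \eqref{eq: wave test function}. The main technical point to watch is that one must already know enough regularity in $t$ of $\pi_t$, $\theta_t$ and $\varphi(t)$ to pass to the limit in $(B)$, but this is mild: weak continuity of $t \mapsto \pi_t$ together with boundedness of $\varphi$ and local boundedness of $\theta_t$ on the support of $f(\cdot - s)$ (for $s$ near $t$) is more than enough, since the test functions have fixed compact support in a small neighborhood. I expect this continuity to be the only subtlety; the rest is bookkeeping.
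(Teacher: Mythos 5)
Your proof is essentially the same chain-rule argument that the paper compresses into two sentences. The paper rewrites \eqref{eq: wave test function} in integral form, notes that both sides of the integral identity agree at $t=0$, and then asserts that differentiating both sides yields equal derivatives ``by \eqref{eq: IVP meaning}''; this assertion is precisely the two-variable chain rule that you spell out by splitting the difference quotient into the contribution $(A)$ from the moving test function and the contribution $(B)$ from the evolving measure, and observing that the transport terms cancel. So the approach is the same; yours is the expanded version.

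One remark of substance: you should not invoke Theorem~\ref{T: well-posedness theorem} to justify regularity in $t$, since this lemma is a step in its proof and that would be circular. Fortunately you do not need it. Continuity of $\varphi$ is part of the hypotheses in \eqref{eq: IVP}, and continuity of $\tau \mapsto \int g\,\mathrm{d}\pi_\tau$ for a fixed $g\in C_0^1$ follows directly from \eqref{eq: IVP meaning} (differentiable implies continuous). For passing to the limit in $(B)$ you do not even need a uniform bound on $\theta_\tau$: writing $h_s = h_t + (h_s - h_t)$, the error term is controlled using $\int |h_s-h_t|\,\theta_\tau\,\mathrm{d}\pi_\tau \le \|h_s-h_t\|_\infty \int\theta_\tau\,\mathrm{d}\pi_\tau = \|h_s-h_t\|_\infty$ by the normalization $\int\theta_\tau\,\mathrm{d}\pi_\tau = 1$, together with $\|h_s'-h_t'\|_\infty\to 0$ by uniform continuity of $f'$ and boundedness of $\varphi$. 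With this fix the argument is self-contained and relies only on the standing assumptions in \eqref{eq: IVP} and \eqref{eq: IVP meaning}, as it should.
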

\begin{proof}
Equation~\eqref{eq: wave test function} is equivalent to the integral formulation
\begin{multline}\label{eq: travelling wave integral form} \int f(s-t)\,d\pi_t(s) - \int f(s)\,d\pi_0(s) \\= \int_0^t \left( -\int f(s-u)\varphi(u)\theta_u(s)\,d\pi_u(s) + \varphi(u)f(-u)\right)\,du\,. \end{multline}
This holds trivially for $t=0$. Since $f \in C_0^1(\mathbb{R})$ we may differentiate both sides of~\eqref{eq: travelling wave integral form} with respect to $t$ and~\eqref{eq: IVP meaning} says that the derivatives agree.
\end{proof}

\subsection{Uniform integrability of solutions}

\begin{lemma}\label{L: mean age stays finite} For any solution of~\eqref{eq: IVP} over $t \in [0,T]$, such that $\pi_0 \in \mathcal{P}_1([0,\infty))$, we have
$$\int_0^\infty \sup_{t \in [0,T]} \pi_t([x,\infty)) \,dx < \infty\,.$$
and the family $\{\pi_t: t \in [0,T]\}$ is uniformly integrable. Furthermore, for every $0 \le t \le T$, $$\int x\,d\pi_t(x) \le t + \int x\,d\pi_0(x)\,.$$
\end{lemma}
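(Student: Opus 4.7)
The key idea is a stochastic domination: in \eqref{eq: IVP} the transport term moves mass to the right at unit speed, the death term $-\varphi(t)\theta_t\,d\pi_t$ only removes (nonnegative) mass since $\theta_t \ge 0$, and the birth term $\varphi(t)\delta_0$ adds mass only at age $0$. Hence the tail $\pi_t([x,\infty))$ for $x > t$ can be no larger than the shifted initial tail $\pi_0([x-t,\infty))$. I would prove this domination using Lemma~\ref{L: travelling wave 1} and then read off the three claims by integration.

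For $a > 0$, choose test functions $f_{a,\epsilon,R} \in C_0^1(\mathbb{R})$ with values in $[0,1]$ that vanish on $(-\infty,a] \cup [R+\epsilon,\infty)$, equal $1$ on $[a+\epsilon,R]$, and are monotone on the two transition intervals $[a,a+\epsilon]$ and $[R,R+\epsilon]$. For every $u \ge 0$ we have $f_{a,\epsilon,R}(-u) = 0$, so the birth term in \eqref{eq: wave test function} drops out. Substituting $f_{a,\epsilon,R}$ into the integrated form \eqref{eq: travelling wave integral form} and using $\varphi \ge 0$ and $\theta_u \ge 0$ gives
$$\int f_{a,\epsilon,R}(s-t)\,d\pi_t(s) \le \int f_{a,\epsilon,R}(s)\,d\pi_0(s).$$
Letting $R\to\infty$ and then $\epsilon\downarrow 0$, with the $f_{a,\epsilon,R}$ chosen so as to increase monotonically pointwise to $\mathbf{1}_{(a,\infty)}$, two applications of monotone convergence yield $\pi_t((a+t,\infty)) \le \pi_0((a,\infty))$. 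Replacing $a$ by $x-t-\delta$ and letting $\delta\downarrow 0$ along a decreasing family of sets produces the key inequality
$$\pi_t([x,\infty)) \le \pi_0([x-t,\infty)), \qquad x \ge t,\ t \in [0,T].$$

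The three conclusions follow by elementary integration. Using $\int x\,d\pi_t(x) = \int_0^\infty \pi_t([y,\infty))\,dy$ and splitting at $y=t$ (bounding the first piece trivially by $1$) gives $\int x\,d\pi_t(x) \le t + \int x\,d\pi_0(x)$. For the supremum bound, $\pi_0([y-t,\infty))$ is nondecreasing in $t$, so $\sup_{t\in[0,T]}\pi_t([y,\infty)) \le \pi_0([y-T,\infty))$ whenever $y \ge T$; integrating produces the stated finite bound $T + \int x\,d\pi_0(x)$. Uniform integrability follows by decomposing $\int_K^\infty x\,d\pi_t(x) = K\pi_t([K,\infty)) + \int_K^\infty \pi_t((y,\infty))\,dy$: for $K \ge T$ the first term is at most $K\pi_0([K-T,\infty)) \le \frac{K}{K-T}\int_{K-T}^\infty x\,d\pi_0(x)$, and the second is at most $\int_{K-T}^\infty \pi_0([y,\infty))\,dy$, both of which vanish as $K\to\infty$ since $\pi_0 \in \mathcal{P}_1([0,\infty))$. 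The only delicate point is justifying the passage to the limit in \eqref{eq: travelling wave integral form}, which requires integrability in $u$ of the death-term integrand on $[0,t]$; this is routine given the continuity of $\varphi$ and the normalization $\int \theta_u\,d\pi_u = 1$.
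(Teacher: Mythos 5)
Your proof is correct and follows essentially the same route as the paper: both use Lemma~\ref{L: travelling wave 1} with nonnegative test functions vanishing near $0$ (so the birth term drops and the death term only helps), deduce the shifted tail domination $\pi_t([x+t,\infty)) \le \pi_0([x,\infty))$, and then read off the three conclusions by integrating tails. You spell out the monotone-approximation and uniform-integrability steps in more detail than the paper (which simply asserts ``uniform integrability follows''), but the underlying argument is identical.
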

\begin{proof} 
Let $x \ge 0$. For every $f \in C_0^1(\mathbb{R})$ with values in $[0,1]$ and supported on $[x,\infty)$ we have from~\eqref{eq: wave test function} that for all $t \ge 0$ $$ \int f(s-t)\,d\pi_t(s) \le \int f(s) \,d\pi_0(s) \le \pi_0([x,\infty))\,.$$ $f(\cdot - t)$ may approximate the indicator function of $[x,\infty)$ from below, so $$ \pi_t([x+t,\infty)) \le \pi_0([x,\infty))\,.$$
Hence $$\sup_{t \in [0, T]} \pi_t([x,\infty)) \le \begin{cases} 1 & \text{if $x \le T$},\\ \pi_0([x-T,\infty)) & \text{if $x \ge T$.}\end{cases}$$
Uniform integrability follows, as does
\begin{eqnarray*} \int_0^\infty \sup_{t \in [0, T)} \pi_t([x,\infty))\,dx  &\le& T + \int_0^\infty \pi_0([y,\infty))\,dy \\ &=& T + \int x \,d\pi_0(x)\,<\, \infty\,.\end{eqnarray*}
For any $0 \le t \le T$,
\begin{eqnarray*}\int x\, d\pi_t(x) &\le & t + \int (x-t)^+  d\pi_t(x) =  t + \int_{0}^\infty \pi_t([x+t,\infty))\,dx \\ & \le & t + \int_0^\infty \pi_0([x,\infty)) = t + \int x \,d\pi_0(x)\,.
\end{eqnarray*}
\end{proof}

\subsection{The metric $W_1$}\label{SS: W_1 metric}
 
In this section we give a quick summary of the $W_1$ metric, followed by a first application to bound the speed of $\pi_t$ in the $W_1$ metric. For a fuller account of the properties of $W_1$, see Rachev~\cite{Rachev} or Dudley~\cite[\S11.8]{Dudley}.
 
Let $(X,d)$ be a complete separable metric space. Denote by $\mathcal{P}_1(X,d)$ the space of Borel probability measures $\mu$ on $(X,d)$ s.~t.~$\int d(x,x_0)\,d\mu(x) < \infty$ for some (hence any) basepoint $x_0 \in X$. For $\mu, \nu \in \mathcal{P}_1(X,d)$, consider the set $\mathcal{C}(\mu,\nu)$ of all Borel probability measures on $X \times X$ with marginals $\mu$ and $\nu$, i.e. the set of couplings of $\mu$ and $\nu$. Define
$$W_1(\mu, \nu) = \inf_{m \in \mathcal{C}(\mu,\nu)} \int d(x,y) \,dm(x,y)\,.$$
Then $W_1$ is a complete separable metric on $\mathcal{P}_1(X,d)$.

  For a measure $\mu \in \mathcal{P}_1(X,d)$ and a sequence $\left(\mu_n\right)$ of measures in $\mathcal{P}_1(X,d)$, $W_1(\mu_n,\mu) \to 0$ if and only if both $\mu_n \to \mu$ weakly (against bounded continuous functions) and $\int d(x,x_0) d\mu_n \to \int d(x,x_0) d\mu$.  Note the latter condition does not depend on the choice of basepoint, for the function $x \mapsto d(x,x_0) - d(x,x_1)$ is bounded and continuous.

The $W_1$ metric has a useful dual formulation, originally due to Kantorovich and Rubinstein for compact metric spaces, and generalized to separable metric spaces by Dudley and de Acosta. Denote by $\mathrm{Lip}^1(X,d)$ the space of real-valued functions of $X$ that are $1$-Lipschitz with respect to the metric $d$. Then for any $\mu,\nu \in \mathcal{P}_1(X,d)$, we have 
\begin{equation} \label{eq: W1 dual formulation} W_1(\mu,\nu)  = \sup\left\{ \left|\int f d\mu - \int f d\nu\right|: \;f \in \mathrm{Lip}^1(X,d)\right\}\,.\end{equation}   
For example, if $(X,d)$ is $\mathbb{R}$ with the Euclidean metric then taking $f(x) = x$ shows that $\int x\,d\pi(x)$ is a $1$-Lipschitz functional of $\pi$ with respect to $W_1$. We write $C_0^1(\mathbb{R})$ for the space of continuously differentiable and compactly supported real-valued functions on $\mathbb{R}$.

\begin{lemma}\label{L: restricted duality}
For any $\mu, \nu \in \mathcal{P}_1(\mathbb{R})$ we have
\begin{equation} \label{eq: W1 dual formulation, restricted} W_1(\mu,\nu)  = \sup\left\{ \left|\int f d\mu - \int f d\nu\right|: \;f \in \mathrm{Lip}^1(\mathbb{R}) \cap C_0^1(\mathbb{R})\right\}\,.\end{equation}   
\end{lemma}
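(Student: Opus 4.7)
The plan is to prove two inequalities. One direction, that the restricted supremum is at most $W_1(\mu,\nu)$, is immediate from \eqref{eq: W1 dual formulation} since $\mathrm{Lip}^1(\mathbb{R}) \cap C_0^1(\mathbb{R}) \subseteq \mathrm{Lip}^1(\mathbb{R})$. The substantive direction is the reverse: given any $f \in \mathrm{Lip}^1(\mathbb{R})$ and any $\delta>0$, I would produce some $h \in \mathrm{Lip}^1(\mathbb{R})\cap C_0^1(\mathbb{R})$ with $|\int f\,d\mu - \int h\,d\mu|<\delta$ and $|\int f\,d\nu - \int h\,d\nu|<\delta$. Combined with \eqref{eq: W1 dual formulation}, this yields \eqref{eq: W1 dual formulation, restricted}. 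Since $\int f\,d\mu - \int f\,d\nu$ and the Lipschitz constant of $f$ are both invariant under addition of constants, I may normalize $f(0)=0$, which gives the pointwise envelope $|f(x)| \le |x|$; this envelope is integrable against $\mu$ and $\nu$ because $\mu,\nu \in \mathcal{P}_1(\mathbb{R})$.

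Step one is to cut $f$ off to compact support without losing the $1$-Lipschitz property. Define the tent function $g_R(x) := (2R-|x|)^+$ and set $f_R := \max\bigl(-g_R,\min(g_R,f)\bigr)$. As a pointwise max/min of $1$-Lipschitz functions, $f_R$ is $1$-Lipschitz; it vanishes outside $[-2R,2R]$; and on $[-R,R]$ we have $|f| \le R \le g_R$, so $f_R = f$ there. Globally $|f_R(x)| \le |x|$. Hence $f_R \to f$ pointwise as $R\to\infty$, and dominated convergence with dominating function $|x|$ gives $\int f_R\,d\mu \to \int f\,d\mu$ and likewise for $\nu$.

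Step two is to smooth $f_R$ to make it $C^1$. Fix a standard mollifier $\rho_\epsilon \ge 0$ with $\int \rho_\epsilon = 1$ and $\mathrm{supp}\,\rho_\epsilon \subseteq [-\epsilon,\epsilon]$, and set $h_{R,\epsilon} := f_R * \rho_\epsilon$. Convolution with a nonnegative probability kernel preserves the $1$-Lipschitz property and produces a $C^\infty$ function whose support lies in $[-2R-\epsilon,2R+\epsilon]$; hence $h_{R,\epsilon}\in \mathrm{Lip}^1(\mathbb{R}) \cap C_0^1(\mathbb{R})$. By Fubini,
\[
\int h_{R,\epsilon}\,d\mu = \int \rho_\epsilon(y) \left(\int f_R(x-y)\,d\mu(x)\right) dy,
\]
and the inner integral is continuous in $y$ (by dominated convergence, using continuity of $f_R$ and the bound $|f_R(x-y)| \le |x|+|y|$), so $\int h_{R,\epsilon}\,d\mu \to \int f_R\,d\mu$ as $\epsilon\to 0$, and likewise for $\nu$.

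Combining the two limits, choosing $R$ sufficiently large and then $\epsilon$ sufficiently small produces the desired $\delta$-approximation; since $\delta>0$ was arbitrary, the restricted supremum equals $W_1(\mu,\nu)$. The only point requiring care is the construction of $f_R$ in step one: the tent $g_R$ must be taken with height $2R$ (not $R$) so that the clip $\max(-g_R,\min(g_R,f))$ truly equals $f$ on the whole of $[-R,R]$, which is what allows the tail estimate to be governed by $\int_{|x|>R} |x|\,(d\mu+d\nu)\to 0$. No genuine obstacle arises; the lemma is essentially a density statement for $\mathrm{Lip}^1\cap C_0^1$ inside $\mathrm{Lip}^1$ relative to the $\mathcal{P}_1$-integrable functionals determined by $\mu$ and $\nu$.
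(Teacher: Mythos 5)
Your proof is correct and follows essentially the same two-step strategy as the paper: truncate to compact support while preserving the $1$-Lipschitz bound, then mollify. The minor technical differences are cosmetic — you clip $f$ against the tent $(2R-|x|)^+$ and invoke dominated convergence with the envelope $|x|$, whereas the paper interpolates $f$ linearly to zero past $\pm n$ and bounds the error by the explicit tail integral $2\int_n^\infty \mu([x,\infty))\,dx$; likewise for the mollification step you pass via Fubini and weak convergence of $\rho_\epsilon$, while the paper just uses $\|g_n*\rho_{1/n}-g_n\|_\infty\le 1/n$ directly. Both routes are valid and equally economical.
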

\begin{proof}
It suffices to show that for any $f \in \mathrm{Lip}^1(\mathbb{R})$ we may find a sequence $f_n$ of functions in $\mathrm{Lip}^1(\mathbb{R}) \cap C_0^1(\mathbb{R})$ such that
$$\left| \int f_n \,d\mu - \int f_n\,d\nu\right| \to \left| \int f\,d\mu - \int f\,d\nu\right|\,.$$
We do this in two steps. First, approximate $f$ by a sequence of functions $g_n$ with support in $[-2n-|f(0)|,2n+|f(0)|]$. To do this, let $g_n$ be the $1$-Lipschitz function that agrees with $f$ on the interval $[-n,n]$, which interpolates linearly between $f(n)$ at $n$ and $0$ at $n+|f(n)|$ and between $f(-n)$ at $-n$ and $0$ at $-n-|f(-n)|$, and which vanishes outside $[-n-|f(-n)|, n+|f(n)|]$. Then $f-g_n$ is $2$-Lipschitz and vanishes on $[-n,n]$, so
$$\left|\int f - g_n \,d\mu\right| \le 2\int_n^\infty   \mu([x,\infty))\,dx \to 0 \quad\text{as $n \to \infty$}\,,$$ and likewise for $\nu$. Second, let $f_n$ be the convolution of $g_n$ with a smooth bump function that is supported in $[-1/n,1/n]$ and has integral equal to $1$. Then $f_n$ is $1$-Lipschitz, and $\|f_n - g_n\|_\infty \le   1/n$ because $g_n$ is $1$-Lipschitz. Hence 
$$\left|\int f_n - g_n \,d\mu\right| \le 1/n \,,$$ and likewise for $\nu$, since $\mu$ and $\nu$ are probability measures.
\end{proof}

For probability measures on $\mathbb{R}$ we may express the $W_1$ metric in terms of the cumulative distribution functions of the measures. If $F_\mu$ and $F_\nu$ denote the cumulative distribution functions of $\mu$ and $\nu$, then $W_1(\mu,\nu)$ is the $L^1$ norm of $F_\mu - F_\nu$.  For example, letting $\delta_0$ denote the atom of unit mass at $0$, we have
$$\int x \,d\pi(x) = W_1(\pi,\delta_0)\,.$$
The monotonic quantile coupling of $\mu$ and $\nu$ achieves the infimum in the definition of $W_1(\mu,\nu)$, but is often useful to bound $W_1$ above using other couplings. 

We let $\tau_r$ denote translation by $r$, operating on measures: for any Borel set $A$,
$$ \tau_r(\mu)(A)  = \mu(\{x: x+r \in A\})\,.$$ For any $\mu \in \mathcal{P}_1(\mathbb{R})$ we have $W_1(\mu, \tau_r(\mu)) = r$. The coupling of $\mu$ and $\tau_t(\mu)$ that translates all of the mass of $\mu$ by $r$ is optimal. Thus any solution of the pure transport equation on $\mathcal{P}_1([0,\infty))$ moves at speed $1$ with respect to $W_1$.

\begin{lemma}\label{L: speed bound}
 Let $\pi_t: t \in [0,T]$ be any solution of \eqref{eq: IVP} taking values in $\mathcal{P}_1([0,\infty))$. Then for $0 \le u \le v\ \le T$, 
 \begin{equation} \label{eq: W1 speed} W_1(\pi_u,\pi_v) \le |v-u| + \int_u^v \varphi(t)\int s \theta_t(s) \,d\pi_t(s)\,dt \end{equation}
\end{lemma}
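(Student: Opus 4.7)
The plan is to use the Kantorovich--Rubinstein duality from Lemma~\ref{L: restricted duality}, which says that $W_1(\pi_u,\pi_v)$ equals the supremum of $\bigl|\int f\,d\pi_v - \int f\,d\pi_u\bigr|$ taken over $f \in \mathrm{Lip}^1(\mathbb{R}) \cap C_0^1(\mathbb{R})$. So I fix such a test function $f$ and try to bound this difference by the right-hand side of~\eqref{eq: W1 speed}.

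Given $f \in \mathrm{Lip}^1(\mathbb{R}) \cap C_0^1(\mathbb{R})$, I apply the weak formulation~\eqref{eq: IVP meaning} and integrate from $u$ to $v$, obtaining
\[
\int f\,d\pi_v - \int f\,d\pi_u = \int_u^v\!\int f'(s)\,d\pi_t(s)\,dt + \int_u^v\!\left(\varphi(t) f(0) - \varphi(t)\!\int f(s)\theta_t(s)\,d\pi_t(s)\right)dt.
\]
Since $\|f'\|_\infty \le 1$, the first integrand is bounded in absolute value by $1$, contributing at most $v-u$. For the second bracket, the crucial trick is to use the normalization $\int \theta_t(s)\,d\pi_t(s) = 1$ to absorb the $\varphi(t)f(0)$ term, rewriting the bracket as
\[
-\varphi(t)\int \bigl(f(s)-f(0)\bigr)\theta_t(s)\,d\pi_t(s).
\]
Because $f$ is $1$-Lipschitz and $\pi_t$ is supported on $[0,\infty)$, we have $|f(s)-f(0)| \le s$, and $\theta_t \ge 0$ as the Perron eigenfunction, so this term is bounded in absolute value by $\varphi(t)\int s\,\theta_t(s)\,d\pi_t(s)$. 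Combining the two bounds yields
\[
\left|\int f\,d\pi_v - \int f\,d\pi_u\right| \le (v-u) + \int_u^v \varphi(t)\int s\,\theta_t(s)\,d\pi_t(s)\,dt,
\]
and taking the supremum over $f$ by Lemma~\ref{L: restricted duality} completes the proof.

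There is no real obstacle here: the argument is a direct calculation. The only subtlety worth flagging is the use of $\int \theta_t\,d\pi_t = 1$ to turn the separate birth and death contributions into a single manifestly Lipschitz expression $f(s) - f(0)$; without this cancellation one would only bound $|f|$ by a constant depending on $f$, which would fail to give a uniform bound over $\mathrm{Lip}^1$. The restriction to $f \in C_0^1$ (as opposed to arbitrary $\mathrm{Lip}^1$) in Lemma~\ref{L: restricted duality} is precisely what makes~\eqref{eq: IVP meaning} applicable, so the two lemmas fit together cleanly.
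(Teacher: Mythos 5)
Your argument is exactly the paper's: reduce via Lemma~\ref{L: restricted duality} to $1$-Lipschitz $C_0^1$ test functions, bound the transport term by $|v-u|$, and use the normalization $\int\theta_t\,d\pi_t=1$ to merge the birth and death terms into $-\varphi(t)\int(f(s)-f(0))\theta_t(s)\,d\pi_t(s)$, then apply $|f(s)-f(0)|\le s$. Correct, and essentially the same as the paper's proof.
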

\begin{proof}
By Lemma~\ref{L: restricted duality},  it suffices to show that for any test function $f \in \mathrm{Lip}^1([0,\infty)) \cap C_0^1(\mathbb{R})$,  $|\int f \,d\pi_u - \int f\,d\pi_v|$ is bounded by the RHS of \eqref{eq: W1 speed}. This follows when we observe
\begin{eqnarray*} \left| \frac{d}{dt} \int f(s) \,d\pi_t(s)\right| & \le &  \int |f'(s)|\,d\pi_t(s) \; +\; \left|\int (f(s) - f(0)) \varphi(t) \theta_t(s)\,d\pi_t(s)\right| \\ & \le & 1 + \int s \varphi(t) \theta_t(s)\,d\pi_t(s)\,.\end{eqnarray*}
\end{proof}
It is not obvious a priori that the inner integral on the RHS of \eqref{eq: W1 speed} is finite. After some work we will show that $\theta_t$ may be uniformly bounded in terms of the tail of $\pi_t$, and from this we will deduce that the speed of $\pi_t$ with respect to $W_1$ may be bounded for $t$ in some time interval $[0,\epsilon)$ where $\epsilon$ and the bound on the speed both depend on $\pi_0$. 

\subsection{A useful system of neighborhoods}
Define the following subsets of $\mathcal{P}_1([0,\infty))$:
$$\mathcal{P}_{r,c} := \left\{\pi \in \mathcal{P}_1([0,\infty))\,:\, \int \mathbf{1}(x \ge r)\,x \,d\pi(x) < c\right\}\,.$$
For each fixed $c > 0$, the sets $\mathcal{P}_{r,c}\,:\, r \in (0,\infty)$ are nested open sets in the $W_1$ topology, which cover $\mathcal{P}_1([0,\infty))$. If $\pi \in \mathcal{P}_{r,c}$ then $\int x\,d\pi(x) \le r + c$.

\begin{lemma}\label{L: metric nestedness}
Let $r' > r \ge 0$. If $\pi \in \mathcal{P}_{r,c}$ then for any $r' > r$ and any $\tilde{\pi} \in \mathcal{P}_1([0,\infty))$ we have
$$ \tilde{\pi} \in \mathcal{P}_{r',\, c + (r'/(r'-r)) W_1(\tilde{\pi},\pi)}\,.$$
\end{lemma}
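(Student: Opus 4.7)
The strategy is to construct an appropriate piecewise-linear Lipschitz test function that sandwiches the indicator-weighted mass functional $x \mapsto \mathbf{1}(x \ge r')\,x$ between itself and the corresponding functional with threshold $r$, and then apply Kantorovich--Rubinstein duality (Lemma~\ref{L: restricted duality}, or rather its unrestricted form \eqref{eq: W1 dual formulation}) to transfer from $\tilde{\pi}$ to $\pi$ at the cost of a term proportional to $W_1(\tilde{\pi},\pi)$.

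Concretely, I would define $f: [0,\infty) \to [0,\infty)$ by setting $f(x) = 0$ for $x \in [0,r]$, $f(x) = \frac{r'}{r'-r}(x-r)$ for $x \in [r,r']$, and $f(x) = x$ for $x \ge r'$. A direct calculation shows $f$ is continuous with piecewise-constant slopes $0$, $\frac{r'}{r'-r}$, and $1$, so $f$ is $\frac{r'}{r'-r}$-Lipschitz. The two pointwise comparisons $\mathbf{1}(x \ge r')\,x \le f(x) \le \mathbf{1}(x \ge r)\,x$ are each immediate on $[0,r]$ and on $[r',\infty)$, and on the middle interval $[r,r']$ they reduce respectively to the trivial bound $0 \le \frac{r'}{r'-r}(x-r)$ and to the rearrangement $\frac{r'}{r'-r}(x-r) \le x \iff x \le r'$.

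Given these bounds, I would estimate
\begin{align*}
\int \mathbf{1}(x \ge r')\,x\,d\tilde{\pi}(x)
&\le \int f(x)\,d\tilde{\pi}(x) \\
&= \int f(x)\,d\pi(x) + \left(\int f\,d\tilde{\pi} - \int f\,d\pi\right) \\
&\le \int \mathbf{1}(x \ge r)\,x\,d\pi(x) + \tfrac{r'}{r'-r}\,W_1(\tilde{\pi},\pi),
\end{align*}
where the last step uses the upper sandwich on the first summand and the duality formula \eqref{eq: W1 dual formulation} applied to $f$, whose Lipschitz constant is $\frac{r'}{r'-r}$. Since $\pi \in \mathcal{P}_{r,c}$ gives $\int \mathbf{1}(x \ge r)\,x\,d\pi(x) < c$, the desired strict inequality $\int \mathbf{1}(x \ge r')\,x\,d\tilde{\pi}(x) < c + \tfrac{r'}{r'-r}W_1(\tilde{\pi},\pi)$ follows.

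There is no real obstacle here; the main small care-point is confirming that the Kantorovich--Rubinstein inequality is applicable to $f$ even though $f$ is unbounded. This is fine because $\pi,\tilde{\pi} \in \mathcal{P}_1([0,\infty))$, so $f$ is integrable against both measures, and the duality \eqref{eq: W1 dual formulation} extends verbatim to unbounded Lipschitz functions on $\mathcal{P}_1$ (one can, if desired, truncate $f$ above height $M$ and let $M \to \infty$, using the uniform integrability of the tails supplied by $\int x\,d\pi(x),\int x\,d\tilde\pi(x) <\infty$). The constant $r'/(r'-r)$ is sharp in the sense that it is exactly the reciprocal of the horizontal gap over which one interpolates from slope $0$ to slope $1$ while preserving the target value $x$ at $x = r'$.
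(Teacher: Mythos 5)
Your proof is correct and takes essentially the same route as the paper: both define the same piecewise-linear interpolating function with slope $r'/(r'-r)$ on $[r,r']$, sandwich $\mathbf{1}(x\ge r')x$ below it and $\mathbf{1}(x\ge r)x$ above it, and apply Kantorovich--Rubinstein duality to shift the integral from $\tilde\pi$ to $\pi$ at cost $\tfrac{r'}{r'-r}W_1(\tilde\pi,\pi)$. The remark on unbounded test functions is prudent but already covered, since the paper's duality formula~\eqref{eq: W1 dual formulation} is stated for all of $\mathrm{Lip}^1$ on $\mathcal{P}_1$.
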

\begin{proof}
$$ \int \mathbf{1}(x > r') x \,d\tilde{\pi}(x)  \; \le \; \int \frac{r'}{r'-r}(x-r) \mathbf{1}(r < x \le r') + x \mathbf{1}(x > r')\,d\tilde{\pi}(x)\,.$$
The integrand on the right is $\frac{r'}{r'-r}$-Lipschitz so $\int \mathbf{1}(x > r') x \,d\tilde{\pi}(x)$ is bounded above by
\begin{eqnarray*} &  & \frac{r'}{r'-r}W_1(\tilde{\pi},\pi) + \int \frac{r'}{r'-r}(x-r) \mathbf{1}(r < x \le r') + x \mathbf{1}(x > r')\,d\pi(x)\\ & \le & \frac{r'}{r'-r}W_1(\tilde{\pi},\pi) + \int \mathbf{1}(x > r) x \,d\pi(x)\;\le\; \frac{r'}{r'-r} W_1(\tilde{\pi},\pi)\,+ c\,. \end{eqnarray*}
\end{proof}

\subsection{Properties of the operator $\mathcal{L}_\pi$}

Let $\pi \in \mathcal{P}_1([0,\infty))$. Assume that $\pi \neq \delta_0$ so that $\mathcal{L}_\pi$ is not the zero operator. Denote the inner product and norm on the Hilbert space $L^2(\pi)$ by $\langle \cdot,\cdot\rangle_\pi$ and $\|\cdot\|_\pi$. For any $f \in L^2(\pi)$ we have
\begin{eqnarray*} \|\mathcal{L}_\pi f\|^2_\pi & = & \int \left(\int (x \wedge y) f(y)\,d\pi(y)\right)^2 \,d\pi(x) \\ & \le & \int  \left(\int (x \wedge y)^2\,d\pi(y)\right)\left(\int f(y)^2 \,d\pi(y)\right) \,d\pi(x)\\
& \le & \|f\|_\pi^2 \int\!\int xy\,d\pi(y)\,d\pi(x) = \|f\|_\pi^2 \left(\int x\,d\pi(x)\right)^2\,.
\end{eqnarray*}
Thus $\mathcal{L}_\pi$ is a bounded operator with operator norm $\|\mathcal{L}_\pi\| \le \int x \,d\pi(x)$. In fact $\mathcal{L}_\pi$ is a Hilbert-Schmidt integral operator, with Hilbert-Schmidt norm $$\|\mathcal{L}_\pi\|_{\mathrm{HS}} = \left(\int\int(x \wedge y)^2 \,d\pi(x)\,d\pi(y)\right)^{1/2} \le \int x \,d\pi(x)\, < \infty\,.$$ It follows (see e.g. \cite[Theorem VI.22]{reed_simon})
that $\mathcal{L}_\pi$ is a compact operator and $\|\mathcal{L}_\pi\| \le \|\mathcal{L}_\pi\|_{\mathrm{HS}}$. The following facts about $\mathcal{L}_\pi$ are proved in \cite[Lemma 4.3]{CraneRathYeo}.
\begin{lemma}\label{L: operator properties}{\quad}
\begin{enumerate}[(i)]
\item \label{L:self_adjoint_pos} $\mathcal{L}_\pi$ is a positive semidefinite and compact self-adjoint  operator. 
\item \label{L:Lipsch} Each element of the image of $\mathcal{L}_\pi$ is represented by a Lipschitz function, and $\mathcal{L}_\pi$ maps non-negative functions to increasing non-negative functions.
\item \label{L:principal}
$\mathcal{L}_\pi$ has a simple principal eigenvalue $\lambda$ satisfying $0 < \lambda = \|\mathcal{L}_\pi\|$ (i.e. the eigenspace associated to $\lambda$ is one-dimensional).
\item \label{L:eigen} There exists a unique eigenfunction $\theta\in L^2(\pi)$ for which $\mathcal{L}_\pi \theta=\lambda \theta$ and $\int \theta(x)\mathrm{d}\pi(x)=1$. We may identify $\theta$ with its increasing Lipschitz-continuous representative defined for each $y \ge 0$ by
\begin{equation}\label{eq: theta rep defn} \lambda \theta(y) = \int (x \wedge y) \theta(x) \,d\pi(x)\,.\end{equation}
%\item \label{L:spectralgap} $\mathcal{L}_\pi$ maps $\theta^\perp$ to itself and acts on $\theta^\perp$ with spectral radius strictly less than $\lambda$.
\end{enumerate}
\end{lemma}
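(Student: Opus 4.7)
My plan is to verify the four assertions in sequence, leveraging the Hilbert--Schmidt computation derived above. For (i), self-adjointness is immediate from Fubini and the symmetry $x\wedge y=y\wedge x$. Positive semidefiniteness follows from the factorisation
$$x\wedge y=\int_0^\infty \mathbf{1}_{[0,x]}(t)\,\mathbf{1}_{[0,y]}(t)\,dt,$$
which after swapping the order of integration expresses $\langle\mathcal{L}_\pi f,f\rangle_\pi$ as $\int_0^\infty \bigl(\int\mathbf{1}_{[0,x]}(t)\,f(x)\,d\pi(x)\bigr)^{\!2}\,dt\ge 0$. Compactness is the Hilbert--Schmidt bound already established. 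Part (ii) follows from the pointwise estimate $0\le(x\wedge y_2)-(x\wedge y_1)\le y_2-y_1$ valid for $0\le y_1\le y_2$, which yields
$$|\mathcal{L}_\pi f(y_2)-\mathcal{L}_\pi f(y_1)|\le(y_2-y_1)\int|f|\,d\pi\le(y_2-y_1)\|f\|_\pi$$
by Cauchy--Schwarz against the probability measure $\pi$. For $f\ge 0$ the same pointwise inequality gives monotonicity of the image, and non-negativity is even more immediate.

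For (iii), the spectral theorem for compact self-adjoint operators supplies a sequence of real eigenvalues accumulating only at $0$, all non-negative by (i), with maximum $\lambda=\|\mathcal{L}_\pi\|$ attained. Since $\pi\ne\delta_0$ I can pick $r>0$ with $\pi([r,\infty))>0$, and then $\mathcal{L}_\pi\mathbf{1}(r)\ge r\,\pi([r,\infty))>0$, confirming $\lambda>0$.

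The main obstacle is simplicity of $\lambda$, for which I would deploy a Perron--Frobenius argument. Writing any real eigenfunction $\theta$ at $\lambda$ as $\theta=\theta_+-\theta_-$, the elementary identity
$$|\theta(x)||\theta(y)|-\theta(x)\theta(y)=2\theta_+(x)\theta_-(y)+2\theta_-(x)\theta_+(y)\ge 0$$
together with non-negativity of the kernel yields the chain
$$\langle\mathcal{L}_\pi|\theta|,|\theta|\rangle_\pi\ge\langle\mathcal{L}_\pi\theta,\theta\rangle_\pi=\lambda\|\theta\|_\pi^2=\lambda\||\theta|\|_\pi^2\ge\langle\mathcal{L}_\pi|\theta|,|\theta|\rangle_\pi.$$
Equality throughout forces the $\pi\otimes\pi$-integral of $(x\wedge y)(\theta_+(x)\theta_-(y)+\theta_-(x)\theta_+(y))$ to vanish, and since $x\wedge y>0$ on $(0,\infty)^2$ this means one of $\theta_+,\theta_-$ vanishes $\pi$-almost everywhere on $(0,\infty)$. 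Combined with $\theta(0)=0$, which is forced pointwise by the eigenvalue equation since $x\wedge 0\equiv 0$, every real eigenfunction at $\lambda$ has constant sign $\pi$-a.e. If the eigenspace had dimension at least two I would replace basis elements by their absolute values to obtain two linearly independent non-negative eigenfunctions $\theta_1,\theta_2$, each with strictly positive $\pi$-integral; then $c=\int\theta_1\,d\pi/\int\theta_2\,d\pi>0$ makes $\theta_1-c\theta_2$ a non-zero eigenfunction with zero $\pi$-integral, violating the constant-sign property. Hence $\lambda$ is simple. For (iv) I normalise the unique non-negative eigenfunction so that $\int\theta\,d\pi=1$, which is possible because its $\pi$-integral is strictly positive, and identify it with the continuous, increasing, Lipschitz representative supplied by (ii); the stated integral identity is then tautological and monotonicity in $y$ follows from $y\mapsto x\wedge y$ being non-decreasing.
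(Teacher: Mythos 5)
Your proof is correct, and it is worth noting up front that the present paper does not actually prove this lemma internally: the text explicitly defers it to \cite[Lemma~4.3]{CraneRathYeo}, where (as the introduction remarks) the argument leans on the connection to multitype branching processes. You supply a self-contained, purely functional-analytic proof, which is a genuinely different route and arguably cleaner in this setting.

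Two elements of your argument deserve highlighting. The factorisation $x\wedge y=\int_0^\infty\mathbf{1}_{[0,x]}(t)\,\mathbf{1}_{[0,y]}(t)\,dt$ gives positive semidefiniteness in one line without invoking any probabilistic interpretation, and it is the same ``square-root of the kernel'' trick that underlies the Kosambi--Karhunen--Lo\`eve connection mentioned in the paper. For simplicity of $\lambda$, you use the classical Perron--Frobenius absolute-value argument: the identity $|\theta(x)||\theta(y)|-\theta(x)\theta(y)=2\theta_+(x)\theta_-(y)+2\theta_-(x)\theta_+(y)\ge 0$ combined with strict positivity of $x\wedge y$ on $(0,\infty)^2$ forces any top eigenfunction to have constant sign $\pi$-a.e.\ on $(0,\infty)$, and the zero-integral contradiction then rules out a two-dimensional eigenspace. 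This is a standard but entirely rigorous route; just be explicit that self-adjointness lets you restrict to real eigenfunctions, and that the equality case of the Rayleigh bound is what converts the chain of inequalities into the vanishing of $\int\!\int (x\wedge y)(\theta_+(x)\theta_-(y)+\theta_-(x)\theta_+(y))\,d\pi\,d\pi$. One very minor point of care: nontriviality of $\theta$ as an element of $L^2(\pi)$ together with $\theta(0)=0$ (forced by $\lambda>0$ and $x\wedge 0\equiv0$) guarantees $\theta$ is not $\pi$-a.e.\ zero on $(0,\infty)$, so the constant-sign function indeed has strictly positive integral; you implicitly use this, and it is correct. Overall the proof is sound and, compared with the branching-process route in the cited reference, buys elementarity and transparency at no real cost.
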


\begin{remark}In fact \cite[Thm 4.6]{Aleksandrov} shows (after a change of variables) that $\mathcal{L}_\pi$ is a bounded operator on $L^2(\pi)$ if and only if $\pi([x,\infty)) = O(1/x)$ as $x \to \infty$, it is compact if and only if $\pi([x,\infty)) = o(1/x)$ as $x \to \infty$, and it belongs to the trace class if and only if $\int x\,d\pi(x) < \infty$. (See also \cite[Example 17.6]{BollobasJansonRiordan}.)
 Thus for $\pi \in \mathcal{P}_1([0,\infty))$, $\mathcal{L}_\pi$ has discrete spectrum and the sum of the eigenvalues is the trace, which can be shown to be $\int x\,d\pi(x)$. The sum of the squared eigenvalues is $\|\mathcal{L}_\pi\|_{HS}^2$. One can show that the spectral gap of $\mathcal{L}_\pi$ is at least $1/\lim_{x \to \infty} \theta(x)$. We will not use these properties in the proof of Theorem~\ref{T: well-posedness theorem}. 
\end{remark}

\subsection{Correspondence between $\theta$ and $\pi$}
\label{SS: theta-pi correspondence}

The following results explain that for $\pi \in \mathcal{P}_1((0,\infty))$, the eigenvalue problem for $\mathcal{L}_\pi$ is a generalized Sturm-Liouville problem, and that the principal solution $(\lambda, \theta)$ determines $\pi$. Moreover, $\theta$ is bounded, and the explicit bound on $\theta$ in terms of the tail of $\pi$ will be crucial in later sections.

\begin{lemma}[$\theta$ determines $\pi$]
For $\pi \in \mathcal{P}_1((0,\infty))$, let $\theta$ be the representative of the normalized principal eigenfunction of $\mathcal{L}_\pi$ with eigenvalue $\lambda$ defined by equation~\eqref{eq: theta rep defn}. Then $\theta$ is Lipschitz, concave and increasing and $\theta(x) = o(x)$ as $x \to \infty$. Moreover, $\theta$ determines $\pi$ since for each $0 < a < b < \infty$ we have \begin{equation}\label{eq: theta-pi correspondence integral form} \int \theta(x) \mathbf{1}(a < x < b) \,d\pi(x) = \lambda( \theta_r'(a) - \theta_l'(b))\,,\end{equation} 
where $\theta_l'$ and $\theta_r'$ denote the left and right derivatives of $\theta$ respectively. 
\end{lemma}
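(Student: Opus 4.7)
The plan is to derive everything from the defining integral equation~\eqref{eq: theta rep defn}, namely $\lambda\theta(y) = \int (x \wedge y)\theta(x)\,d\pi(x)$. Since $y \mapsto x \wedge y$ is, for each fixed $x$, non-negative, increasing, concave and $1$-Lipschitz, integrating against the positive finite measure $\theta\,d\pi$ (of total mass $1$ by normalization) transfers these properties directly to $\lambda\theta$. Hence $\theta$ is increasing, concave and $(1/\lambda)$-Lipschitz on $[0,\infty)$. For the sublinear growth $\theta(y) = o(y)$, I would write $\lambda\theta(y)/y = \int (x/y \wedge 1)\theta(x)\,d\pi(x)$ and apply dominated convergence: the integrand is dominated by $\theta \in L^1(\pi)$ and tends to $0$ pointwise as $y \to \infty$.

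Next I would differentiate the defining equation one-sidedly at each $y > 0$. Splitting $(x \wedge (y+h)) - (x \wedge y)$ into the three cases $x \le y$, $y < x \le y+h$, $x > y+h$ and passing to the limit gives, after elementary estimates,
\begin{equation*}
\lambda\theta_r'(y) = \int_{(y,\infty)} \theta(x)\,d\pi(x),\qquad \lambda\theta_l'(y) = \int_{[y,\infty)} \theta(x)\,d\pi(x),
\end{equation*}
where the left derivative picks up an extra $\theta(y)\pi(\{y\})$ from any atom of $\pi$ at $y$. This independently reconfirms concavity via $\lambda(\theta_l' - \theta_r')(y) = \theta(y)\pi(\{y\}) \ge 0$, and subtracting yields $\lambda(\theta_r'(a) - \theta_l'(b)) = \int_{(a,\infty)}\theta\,d\pi - \int_{[b,\infty)}\theta\,d\pi = \int_{(a,b)} \theta(x)\,d\pi(x)$, which is exactly~\eqref{eq: theta-pi correspondence integral form}. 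The main point requiring care is the bookkeeping of atoms in the one-sided derivative computation: the clean \emph{open}-interval form of~\eqref{eq: theta-pi correspondence integral form} holds precisely because the compensating atomic jumps appear in $\theta_l'$ rather than $\theta_r'$.

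Finally, to see that $\theta$ determines $\pi$, I would first observe that $\lambda$ itself is recoverable from $\theta$: evaluating the formula for $\lambda\theta_r'$ at $y = 0$ and using $\pi(\{0\}) = 0$ together with the normalization gives $\lambda\theta_r'(0) = \int_{(0,\infty)}\theta\,d\pi = 1$, so $\lambda = 1/\theta_r'(0^+)$. Then~\eqref{eq: theta-pi correspondence integral form} determines the finite measure $\theta\,d\pi$ on every open subinterval of $(0,\infty)$, hence on every Borel subset by outer regularity. To invert and recover $\pi$ itself, I would show $\theta(y) > 0$ for every $y > 0$: if $\theta(y_0) = 0$ then $(x \wedge y_0)\theta(x) = 0$ for $\pi$-a.e.\ $x$, but $(x \wedge y_0) > 0$ on $(0,\infty)$ and $\pi(\{0\}) = 0$, forcing $\theta = 0$ $\pi$-a.e.\ and contradicting $\int\theta\,d\pi = 1$. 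Hence dividing $\theta\,d\pi$ by the positive function $\theta$ reconstructs $\pi$ on $(0,\infty)$, which is all of its support.
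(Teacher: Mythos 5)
Your proof is correct and follows essentially the same route as the paper: differentiate the eigenvalue equation one-sidedly to obtain $\lambda\theta_r'(y)=\int_{(y,\infty)}\theta\,d\pi$ and $\lambda\theta_l'(y)=\int_{[y,\infty)}\theta\,d\pi$, and subtract to get the stated identity. Your variations (reading off concavity, monotonicity and the Lipschitz bound directly from the kernel $y\mapsto x\wedge y$, using dominated convergence for $\theta(y)=o(y)$ instead of $\theta_r'(y)\to 0$, and the extra remark recovering $\lambda=1/\theta_r'(0^+)$ and dividing out by $\theta>0$) are cosmetic refinements, not a different argument.
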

We can abbreviate~\eqref{eq: theta-pi correspondence integral form} by saying that the pair $(\lambda, \theta)$ determines $\pi$ via 
\begin{equation} d\pi(x) = \frac{-\lambda \theta''(x)}{\theta(x)}\,dx\,, \label{eq: theta-pi correspondence}\end{equation}
where $\theta''$ is interpreted in its distributional sense.
\begin{proof}
We may differentiate equation~\eqref{eq: theta rep defn} with respect to $y$ to obtain left and right derivatives
\begin{equation}\label{eq: left derivative} \lambda \theta_l'(y) = \int \mathbf{1}(x \ge y) \theta(x) \,d\pi(x)\,, \text{ for $y > 0$,} \end{equation} 
\begin{equation}\label{eq: right derivative} \lambda \theta_r'(y) = \int \mathbf{1}(x > y) \theta(x) \,d\pi(x)\,,\text{ for $y \ge 0$.}\end{equation}
Since these are both non-negative decreasing functions of $y$, we see that $\theta$ is increasing and concave. We have $\theta(0) = 0$ and $\theta'_r(0)  = \lambda^{-1} < \infty$ so $\theta$ is Lipschitz with constant $\lambda^{-1}$.
Letting $y \to \infty$  we see that $\lim_{y \to \infty} \theta_l'(y) = \lim_{y \to \infty} \theta'_r(y) = 0$, which implies that $\theta(x) = o(x)$ as $x \to \infty$.  Taking the difference of~\eqref{eq: left derivative} and~\eqref{eq: right derivative} we obtain
$$ \int \theta(x) \mathbf{1}(a < x < b) \,d\pi(x) = \lambda( \theta_r'(a) - \theta_l'(x))\,.$$
\end{proof}

\begin{proposition} \label{P: theta-pi correspondence}
 Let $\theta: [0,\infty) \to [0,\infty)$ be a continuous concave increasing function such that $\theta(0) = 0$, $\theta(x) > 0$ for $x > 0$, and $\theta(x) = o(x)$ as $x \to \infty$, and let $\lambda > 0$. Then there is a non-trivial locally finite positive Borel measure $\pi$ on $(0,\infty)$ defined by \eqref{eq: theta-pi correspondence}. Let $\theta'_r$ denote the right-derivative of $\theta$. Then
\begin{enumerate}
 \item $\int \mathbf{1}_{x > 1} x \,d\pi(x) < \infty$ if and only if $\theta$ is bounded,
 \item $\int \mathbf{1}_{ x < 1} x \, d\pi(x) < \infty$ if and only if $\theta'_r$ is bounded, i.e.~$\theta_r'(0) < \infty$.
\end{enumerate}
When both of these conditions hold then $\mathcal{L}_\pi$ is a positive semidefinite self-adjoint Hilbert-Schmidt operator on $L^2(\pi)$ with simple principal eigenvalue $\lambda$ and $\theta$ represents a principal eigenfunction of $\mathcal{L}_\pi$.
\end{proposition}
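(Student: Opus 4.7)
The plan is to handle the proposition in several stages. First I would verify that~\eqref{eq: theta-pi correspondence} defines a locally finite nontrivial positive Borel measure on $(0,\infty)$: since $\theta$ is concave, $-\theta''$ is a positive Radon measure on $(0,\infty)$; since $\theta$ is continuous and strictly positive there, $1/\theta$ is locally bounded, and so $d\pi = \lambda(-\theta'')/\theta$ is locally finite. It is nontrivial because the hypothesis $\theta(x)=o(x)$ together with $\theta>0$ on $(0,\infty)$ prevents $\theta$ from being linear on all of $[0,\infty)$. The easy direction of each moment equivalence is a direct estimate: when $\theta$ is bounded, $\theta(x)\ge\theta(1)$ on $[1,\infty)$ combined with the IBP identity $\int_1^R (-x\theta''(x))\,dx = \theta'(1) - R\theta'(R) + \theta(R) - \theta(1) \le \theta'(1) + \theta(\infty) - \theta(1)$ yields $\int_1^\infty x\,d\pi < \infty$; when $\theta_r'(0)<\infty$, the inequalities $\theta(x)\ge x\theta(1)$ on $[0,1]$ and $\int_0^1(-\theta'')\,dx \le \theta_r'(0)$ yield $\int_0^1 x\,d\pi < \infty$.

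For the hard directions I would use the integration-by-parts identity (obtained with $u=x/\theta(x)$ and $dv = -\theta''(x)\,dx$)
$$\int_a^b x\,d\pi(x) \;=\; \lambda\left[k(a) - k(b) + \int_a^b \frac{k(x)(1-k(x))}{x}\,dx\right], \qquad k(x) := \frac{x\theta'(x)}{\theta(x)} \in [0,1],$$
where the range of $k$ follows from concavity of $\theta$ combined with $\theta(0)=0$. Changing variable by $t=\log x$ (for Claim 1) or $t=-\log x$ (for Claim 2), and letting $\tilde k$ denote the corresponding composition of $k$, the finiteness of $\int_1^\infty x\,d\pi$ or of $\int_0^1 x\,d\pi$ becomes equivalent to that of $\int_0^\infty \tilde k(1-\tilde k)\,dt$. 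The same substitutions convert the monotonicity of $g' = \theta'/\theta$ (which holds because $g=\log\theta$ is concave whenever $\theta$ is) into the distributional inequality $\tilde k' \le \tilde k$ in Claim 1 or $\tilde k' \ge -\tilde k$ in Claim 2; they also convert the negated hypotheses ``$\theta$ unbounded'' or ``$\theta_r'(0) = \infty$'' into $\int_0^\infty \tilde k\,dt = \infty$, while $\theta(x) = o(x)$ (always assumed) and $\theta(0)=0$ supply $\int_0^\infty (1-\tilde k)\,dt = \infty$.

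Both hard directions therefore reduce to the following lemma, which I expect to be the main obstacle: if $\tilde k\colon [0,\infty) \to [0,1]$ is measurable, satisfies either $\tilde k' \le \tilde k$ or $\tilde k' \ge -\tilde k$ as a distribution on $(0,\infty)$, and both $\int_0^\infty \tilde k\,dt$ and $\int_0^\infty (1-\tilde k)\,dt$ diverge, then $\int_0^\infty \tilde k(1-\tilde k)\,dt = \infty$. I plan to prove this lemma by contradiction. If $\int_0^\infty \tilde k(1-\tilde k)\,dt < \infty$, then the set $C := \{1/3 \le \tilde k \le 2/3\}$ has finite Lebesgue measure (since $\tilde k(1-\tilde k) \ge 2/9$ on $C$), and similarly $\int_{\{\tilde k < 1/3\}}\tilde k\,dt < \infty$ and $\int_{\{\tilde k > 2/3\}}(1-\tilde k)\,dt < \infty$. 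The differential inequality in each case forces every upward crossing of $C$ in Case 1 (respectively every downward crossing in Case 2) to take at least $\log 2$ time units, consuming that much Lebesgue measure of $C$; since $|C|<\infty$, only finitely many such crossings occur. Consequently, for all sufficiently large $t$, $\tilde k$ lies entirely below $1/3$ or entirely above $2/3$, the first alternative contradicting $\int \tilde k\,dt = \infty$ and the second contradicting $\int (1-\tilde k)\,dt = \infty$.

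Once both moment conditions are established, $\pi$ has finite first moment, so by Lemma~\ref{L: operator properties} and the remark following it, $\mathcal{L}_\pi$ is a compact positive semidefinite self-adjoint Hilbert--Schmidt operator on $L^2(\pi)$. Direct integration by parts, exploiting $\theta(0)=0$ and $\theta'(\infty)=0$ (the latter from bounded concavity), yields $(\mathcal{L}_\pi\theta)(y) = -\lambda\int (x\wedge y)\theta''(x)\,dx = \lambda\theta(y)$, and the estimate $\int \theta^2\,d\pi \le \theta(\infty) \int \theta\,d\pi = \lambda\theta(\infty)\theta_r'(0) < \infty$ confirms $\theta \in L^2(\pi)$. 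Finally, since $\theta$ is strictly positive on $(0,\infty)$, Lemma~\ref{L: operator properties}\eqref{L:eigen} identifies $\theta$ as the unique normalized positive eigenfunction and hence the principal eigenfunction of $\mathcal{L}_\pi$, with $\lambda$ as the principal eigenvalue.
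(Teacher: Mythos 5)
Your proposal is correct and takes a genuinely different route from the paper. The paper first reduces claim 2 to claim 1 via the involution $\theta \mapsto \eta$, $\eta(x) := x\theta(1/x)$ (which interchanges the behaviour near $0$ and near $\infty$, and interchanges $\theta(\infty)$ with $\theta_r'(0)$), and then proves the hard direction of claim 1 by approximating $\theta$ from below by piecewise-linear and then $C^2$ concave increasing functions, passing the key integration-by-parts inequality to the limit, and obtaining the explicit bound $\theta(\infty) \le \theta(x_0)\,e^{c/(1-c)}$. You instead perform a single integration by parts on $\int_a^b x\,d\pi$ to produce the identity in terms of $k(x)=x\theta'(x)/\theta(x)\in[0,1]$, change variables $t=\pm\log x$, and reduce both hard directions to one clean real-analysis lemma: a $[0,1]$-valued function obeying a one-sided linear differential inequality and with both $\int \tilde k$ and $\int(1-\tilde k)$ divergent must have $\int \tilde k(1-\tilde k)$ divergent as well, proved by a crossing/counting argument. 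This is arguably more elegant --- it avoids the smooth-approximation machinery entirely (the concavity of $\log\theta$, with $\theta'$ interpreted as $\theta_r'$, is all that is needed for the distributional inequality), and it unifies the two claims instead of reducing one to the other via the involution. What the paper's approach buys, and yours as written does not, is the \emph{explicit quantitative} bound \eqref{eq: explicit theta bound in proof}, which the paper re-uses in Lemma~\ref{L: explicit theta bound} and which is essential to the later control of solutions; your IBP identity could be made to yield a comparable bound (since $k\le c$ on $[x_0,\infty)$ when $\int\mathbf{1}(y>x_0)y\,d\pi\le\lambda c$, which follows from the eigenvalue relation (19)), but you would need to carry this out, not merely the qualitative dichotomy. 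Two small points: (a) in the change of variable for claim 2, the roles of $\tilde k$ and $1-\tilde k$ are in fact swapped relative to what you wrote ($\theta(0)=0$ forces $\int\tilde k=\infty$, while $\theta_r'(0)=\infty$ forces $\int(1-\tilde k)=\infty$); this is harmless since your lemma hypothesis requires both integrals to diverge, but the bookkeeping should be corrected. (b) In the crossing argument you should specify the version of $\tilde k$ you work with (e.g.\ built from the right derivative $\theta_r'$, so that $e^{-t}\tilde k(t)=g_r'(e^t)$ is genuinely decreasing with only downward jumps), so that ``every upward crossing of $[1/3,2/3]$ consumes at least $\log 2$ of $|C|$'' is literally true and the crossings can be made disjoint.
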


Note that the measure $\pi$ determined by $\theta$ in Proposition~\ref{P: theta-pi correspondence} need not be a probability measure; in fact it need not be finite. Also, not every $\pi \in \mathcal{P}([0,\infty))$ arises from a $(\lambda, \theta)$ pair as in Proposition~\ref{P: theta-pi correspondence} solving~\eqref{eq: theta e'vec}. For example, if $\pi = \sum_{i=1}^\infty p_i \delta_{a_i}$ where $\sum_{i=1}^\infty p_i = 1$ but $\sup_i (p_i a_i) = \infty$ then from~\eqref{eq: theta rep defn} we find
$$ \lambda \theta(a_i)  \ge p_i a_i \theta(a_i)\,, $$
which cannot hold for all $i$. Nevertheless, Proposition~\ref{P: theta-pi correspondence} establishes a one-to-one correspondence between non-zero elements of $\mathcal{M}_1^{+}((0,\infty))$, up to multiplication by positive scalars, and bounded Lipschitz concave increasing functions $\theta: [0,\infty) \to [0,\infty)$ with $\theta(0) =0$ and $\theta_r'(0) < \infty$, also up to multiplication by positive scalars.

\begin{proof}
By concavity the derivative of $\theta$ exists Lebesgue-almost everywhere. The right-derivative $\theta_r'$ exists everywhere on $[0,\infty)$, possibly taking the value $\infty$ at $0$ but finite everywhere else. $\theta'_r$ is a decreasing right-continuous function on $[0, \infty)$.  Likewise, the left-derivative $\theta_l'$ exists and is finite everywhere on $(0,\infty)$ and defines a decreasing left-continuous function there. We have $\lim_{y \nearrow x} \theta_r'(y) = \theta_l'(x) \ge \theta_r'(x) = \lim_{y \searrow x} \theta_l'(y)$ for all $x > 0$, and $\theta_l'(x) = \theta_r'(x)$ exactly where $\theta'(x)$ exists, which occurs Lebesgue-a.e.

% NOT USED ANY MORE:
%For use later, we define another derivative $\theta_m'(x)$ such that if $\theta'(x)$ exists then $\theta_m'(x) = \theta'(x)$, and otherwise 
%$$\log \theta_l' - \log \theta_r' = \frac{\theta_l'(x) - \theta_r'(x)}{\theta_m'(x)}$$
%Since $$\log \theta_l' - \log \theta_r' = \log (\theta_l'/\theta_r') = \log \left(1 + \frac{(\theta_l' - \theta_r')}{\theta_r'}\right) \le \frac{\theta_l' - \theta_r'}{\theta_r'} $$ 
%and
%$$\log \theta_r' - \log \theta_l' = \log (\theta_r'/\theta_l') = \log \left(1 - \frac{(\theta_l' - \theta_r')}{\theta_l'}\right) \le -\frac{\theta_l' - \theta_r'}{\theta_l'}\,,$$ 
%we have
%$$\theta_r'(x) \le \theta_m'(x) \le \theta_l'(x)\,.$$

Since $\theta$ is continuous and $\theta(x) > 0$ for all $x > 0$, the equation~\eqref{eq: theta-pi correspondence integral form} determines a positive Borel measure $\pi$, which has an atom of mass $(\theta_l'(x) - \theta_r'(x))/\theta(x)$ at each point $x$ where $\theta_l'(x) \neq \theta'_r(x)$, and no other atoms.

$\theta$ is concave increasing and $\theta(0) = 0$, so $0 \le \theta_r'(x) \le \theta_l'(x) \le \theta(x)/x$ for all $x > 0$, and since $\theta(x) = o(x)$ as $x \to \infty$, we have $\theta_l'(x) \to 0$ and $\theta_r'(x) \to 0$ as $x \to \infty$.  Letting $b \to\infty$ in~\eqref{eq: theta-pi correspondence integral form} we obtain
\begin{equation}\label{eq: right derivative of theta} \int \theta(x)\mathbf{1}(a < x)\,d\pi(x) = \lambda \theta_r'(a), \end{equation}
and adding on the contribution from any atom of $\pi$ at $a$ we also have
\begin{equation}\label{eq: right derivative of theta 2} \int \theta(x)\mathbf{1}(a \le x)\,d\pi(x) = \lambda \theta_l'(a), \end{equation}
Integrating equation~\eqref{eq: right derivative of theta} with respect to $a$ from $0$ to $s$ and using Fubini's theorem we obtain for each $s \ge 0$ that
\begin{eqnarray} \notag \int (x \wedge s) \theta(x)\,d\pi(x) & = & \int_0^s \int \theta(x) \mathbf{1}(a < x)\,d\pi(x) \,da\\ & = & \lambda \int_0^s \theta_r'(x)\,da = \lambda(\theta(s) - \theta(0)) = \lambda \theta(s)\,.\label{eq: eigenvalue equation with s} \end{eqnarray} 
Define $\theta(\infty) = \lim_{x \to \infty} \theta(x)$, possibly taking the value $\infty$. Then taking the limit as $s \to \infty$ in~\eqref{eq: eigenvalue equation with s}, we have by monotone convergence that
$$ \int x \theta(x)\,d\pi(x) = \lambda \theta(\infty)$$
so
$$ \int \mathbf{1}_{x > 1} x\,d\pi(x) \le \int \mathbf{1}_{x > 1} \frac{\theta(x)}{\theta(1)} x\,d\pi(x) \le \lambda \frac{\theta(\infty)}{\theta(1)}\,.$$ 
and
$$ \int \mathbf{1}_{x \le 1} x\,d\pi(x) \le \int \mathbf{1}_{x \le 1} \frac{\theta(x)}{\theta(1)}\,d\pi(x) = \lambda \frac{\theta'_r(0) - \theta'_r(1)}{\theta(1)}\,.$$
Therefore $\int \mathbf{1}_{x > 1} x\,d\pi(x) < \infty$ if $\theta$ is bounded and $\int \mathbf{1}_{x < 1} x\,d\pi(x) < \infty$ if $\theta_r'(0) < \infty$.

The converse parts of claims 1 and 2 are closely related. To see this consider the function $\eta: (0,\infty) \to (0,\infty)$ defined by $\eta(x):=x \theta(1/x)$. It is a simple exercise to check that the transformation $ \theta \mapsto \eta$ is an involution on the set of continuous increasing concave functions on $(0,\infty)$ such that $\theta(x) \to 0$ as $x \to 0$ and $\theta(x) = o(x)$ as $x \to \infty$.

%This transformation is an involution: we also have $\theta(x) = x \eta(1/x)$. Since $\theta$ is concave and $\theta(0) = 0$, we find that for $x < y$, $\theta(x)/x \ge \theta(y)/y$ and it follows that $\eta$ is increasing. Since $\theta(x)/x \to 0$ as $x \to \infty$ we have $\lim_{x \to 0} \eta(x) = 0$. We have $\eta'(x) = \theta(1/x) -  \theta'(1/x)/x$, wherever $\theta'(1/x)$ exists. Since $\theta(x) \to 0$ as $x \to 0$, we have $\eta'(x) \to 0$ as $x \to \infty$. Moreover, $\eta$ is concave: because $\theta$ is concave, for any $0 < x < y < z$ we have 
%$$ \theta(1/y) \ge \left(\frac{1/y - 1/z}{1/x - 1/z}\right)\theta(1/x) + \left(\frac{1/x - 1/y}{1/x - 1/z}\right)\theta(1/z)\,,$$
%and rearranging gives $$ \eta(y) \ge \left(\frac{z-y}{z-x}\right)\eta(x) + \left(\frac{y-x}{z-x}\right)\eta(z)\,.$$

Let $\nu$ be the measure defined distributionally by $d\nu = - \lambda \frac{\eta''}{\eta}\,dx$, as in~\eqref{eq: theta-pi correspondence}.
As an equality of distributions, we have $$\eta''(x) dx = \frac{\theta''(1/x)}{x^3}\, dx\,.$$ Making the substitution $x = 1/y$ we have
\begin{eqnarray*} \int \mathbf{1}_{y < 1}\, y \,d\pi(y) & = & -\lambda\int \mathbf{1}_{y < 1}\, y\, \frac{\theta''(y)}{\theta(y)}\,dy  = -\lambda\int \mathbf{1}_{x > 1}\, \frac{1}{x} \frac{\theta''(1/x)}{\theta(1/x)} \frac{dx}{x^2} \\ & = & -\lambda\int \mathbf{1}_{x > 1}\, x \frac{\eta''(x)}{\eta(x)} \,dx  =  \int \mathbf{1}_{x > 1}\, x\,d\nu(x)\,. \end{eqnarray*}
We have $\lim_{x \to \infty} \theta(x)  = \lim_{x \to 0} \eta_r'(x)$ and $\lim_{x \to \infty} \eta(\infty) = \lim_{x \to 0} \theta_r'(0)$. 
So to prove claim 2 it suffices to prove claim 1 for $\theta$ and $\pi$ and apply it to $\eta$ and $\nu$.
A similar calculation shows that the finite size-biased measures that we may informally write as $y \,d\pi(y)$ and $x\,d\nu(x)$ are each other's pushforward under the transformation $y = 1/x$.

Suppose that $\int \mathbf{1}_{x > 1} x \,d\pi(x) < \infty$. We must prove that $\theta(\infty) < \infty$. Observe that $\theta$ is eventually constant if and only if $\pi$ has compact support.  So we may suppose that $\theta$ is not eventually constant, and hence $\theta_r'(x) > 0$ everywhere.
Because $\theta$ is concave, $\theta(0) = 0$, and $\theta(x) > 0$ for every $x > 0$, we have $$ 0 < \frac{x\theta_r'(x)}{\theta(x)} \le \frac{x\theta_l'(x)}{\theta(x)} \le 1 \quad \text{ for every  $x > 0$.}$$ 
The example $\theta(x) \equiv x^{1/2}$ shows that for a general concave increasing function satisfying $\theta(x) = 0$ and $\theta(x) = o(x)$ as $x \to \infty$ we need not have $x \theta_l'(x)/\theta(x) \to 0$ as $x \to \infty$. However, the condition $\int \mathbf{1}(x > 1) x \,d\pi(x) < \infty$ implies $x \theta_l'(x)/\theta(x) \to 0$ as $x \to \infty$. Indeed, since  $\theta$ is concave and $\theta(0) =  0$, $\theta(x)/x$ is decreasing in $x$ and therefore if $0 < x < y$ then $x\theta(y)/\theta(x) \le y$. Using equation~\eqref{eq: right derivative of theta 2} we have
\begin{eqnarray} \notag \lambda \frac{x \theta_l'(x)}{\theta(x)} & = &  \int\! \mathbf{1}(y \ge x) \frac{x}{\theta(x)} \theta(y) \,d\pi(y) \\ & \le & \int \!\mathbf{1}(y \ge x)\, y \,d\pi(y)\,.\end{eqnarray}
It follows that $x\theta_l'(x)/\theta(x) \to 0$ as $x \to \infty$. Likewise we have
\begin{equation}\label{eq: quotient bound} \lambda \frac{x\theta_r'(x)}{\theta(x)} \le \int \mathbf{1}(y > x) y \,d\pi(y)\,.\end{equation}

%We claim that for any $0 < x < y$ we have
%\begin{equation}\label{eq: variation of quotient} \lambda^{-1}\int \mathbf{1}(x < s < y) s \,d\pi(s) \ge \frac{x\theta_r'(x)}{\theta(x)} - \frac{y\theta_l'(y)}{\theta(y)}\,.\end{equation}
The explicit inequality that will show that $\theta$ is in fact bounded is proved by a simple integration by parts, in the case where $\theta$ is twice continuously differentiable. We handle the general case by approximating $\theta$ by smooth concave increasing functions, but it requires a little care. 

 Fix any $0 < x < y$. We claim we may approximate $\theta$ uniformly from below on $[x,y]$ by a sequence of twice continuously differentiable concave increasing functions $\theta_n: [x,y] \to [\theta(x),\theta(y)]$ such that $\theta_n(x) = \theta(x)$, $\theta_n(y) = \theta(y)$, and $\sup_{[x,y)} \theta_n'/\theta_r'  \to 1$ as $n \to \infty$. Then \begin{equation}\label{eq: approximation}\sup_{s \in [x,y)} \frac{\theta_n'(s) \theta(s)}{\theta_r'(s) \theta_n(s)} \to 1 \quad\text{as $n \to \infty$}\,.\end{equation} To achieve such an approximation, we first approximate $\theta$ from below by a sequence of piecewise linear convex functions $\tilde{\theta}_n$, where $\tilde{\theta}_n$ interpolates linearly between the values of $\theta$ at any sequence of points $s_i \in [x,y]$ where $s_0 = x$, $s_n = y$ and $$\theta_r'(s_i) \le \theta_r'(x)^{(n-i)/n}\theta_l'(y)^{i/n} \le \theta_l'(s_i) \quad \text{for $i \in \{1, \dots, n-1\}$.}$$
  Then $\sup_{[x,y]} \tilde{\theta}_n'/\theta_r' \le \left(\theta_l'(y)/\theta_r'(x)\right)^{1/n} \to 1$ as $n \to \infty$. It follows that $\tilde{\theta}_n \to \theta$ pointwise on $[x,y]$. It is simple to approximate $\tilde{\theta}_n$ from below by a smooth concave function $\theta_n$ such that $\theta_n(x) = \tilde{\theta}_n(x) = \theta(x)$, $\theta_n(y) = \tilde{\theta}_n(y) = \theta(y)$, and $\theta_n \le (1+1/n) \tilde{\theta}_n$, $\theta_n' \le (1+1/n)\tilde{\theta}_n'$ on $[x,y]$. 

  Let $\pi_n$ be the measure on $[x,y]$ defined by $$d\pi_n(s) = -\lambda \frac{\theta_n''(s)}{\theta_n(s)}\,ds\, = -\frac{\lambda}{\theta_n(s)} d\theta_n'(s)\,.$$
It follows from the fact that each $\theta_n$ is concave and $\theta_n \nearrow \theta$ pointwise that at every point $s \in (x,y)$ where $\theta'(s)$ exists, we have $\theta_n'(s) \to \theta'(s)$ as $n \to \infty$. Thus $\theta_n'(s) \to \theta'(s)$ for a.e.~$s \in (x,y)$. We also have $\theta_n'(x) \to \theta_r'(x)$, $\theta_n'(y) \to \theta_l'(y)$ as $n \to \infty$. It follows that $\pi_n$ converges weakly to the restriction of $\pi$ to $(x,y)$. Since $$\theta_n'(x) \le\theta_r'(x) \le \theta(x)/x = \theta_n(x)/x\,,$$ and $\theta_n$ is concave, $s/\theta_n(s)$ is increasing in $s$ for each $n$. 

%We have
%\begin{eqnarray}
%\notag\frac{y\theta_n'(y)}{\theta_n(y)} - \frac{x\theta_n'(x)}{\theta_n(x)} & = &  \int_x^y \frac{d}{ds}\left(\frac{s\theta_n'(s)}{\theta_n(s)}\right)\,ds \\ \label{eq: theta n formula}   & = &   \int_x^y s \frac{\theta_n''(s)}{\theta_n(s)}\,ds \,+\, \int_x^y \theta_n'(s) \frac{d}{ds}\left(\frac{s}{\theta_n(s)}\right)\,ds \end{eqnarray}
%So
%$$ \frac{y\theta_n'(y)}{\theta_n(y)} - \frac{x\theta_n'(x)}{\theta_n(x)} \ge  \int_x^y s \frac{\theta_n''(s)}{\theta_n(s)}\,ds   =   - \frac{1}{\lambda}\!\int\!\mathbf{1}(x < s < y)\,s\,d\pi_n(s)\,.$$
%Now take the limit as $n \to \infty$ to obtain~\eqref{eq: variation of quotient}.
%For any $x > 0$ we have 
%\begin{equation}\label{eq: jump of quotient} \frac{x \theta_l'(x)}{\theta(x)} - \frac{x \theta_r'(x)}{\theta(x)} = \frac{x}{\lambda} \pi(\{x\})\,.\end{equation}
%Combining~\eqref{eq: jump of quotient} and~\eqref{eq: variation of quotient}, we have
%\begin{equation}\label{eq: variation of quotient 2} \lambda^{-1}\int \mathbf{1}(x \le s < y) s \,d\pi(s) \ge \frac{x\theta_l'(x)}{\theta(x)} - \frac{y\theta_l'(y)}{\theta(y)}\,.\end{equation}
%Therefore $x \mapsto x\theta_l'(x)/\theta(x)$ is a function of bounded variation. Since it is not bounded away from $0$ as $x \to \infty$, we conclude that $x\theta_r'(x)/\theta(x) \to 0$ as $x \to \infty$. Since $$\sum_{x: \pi(\{x\}) > 0} x\pi(\{x\}) \le \int s \,d\pi(s) < \infty\,,$$ we have $x\,\pi(\{x\}) \to 0$ as $x \to \infty$, hence $x\theta_l'(x)/\theta(x) \to 0$ as $x \to \infty$.

Now choose $0 < c < 1$ and find $x_0 > 0$ depending on $c$ such that $$\int \mathbf{1}(x > x_0) x \,d\pi(x) < \lambda c\,.$$ 
Then we have $\sup_{s \ge x_0} s\theta'_r(s)/\theta(s) < c$, by~\eqref{eq: quotient bound}.  Now assume $x_0 \le x < y$. It follows from~\eqref{eq: approximation} that for $n$ sufficiently large,
\begin{equation}\label{eq: useful sup}\sup_{s \in [x,y)} \frac{s\theta_n'(s)}{\theta_n(s)} < c\,.\end{equation} 
Assuming~\eqref{eq: useful sup} holds,  integrating by parts we have 
\begin{eqnarray*} \lambda^{-1}\int\! s\,d\pi_n(s) &=&  \int_x^y \frac{-s \theta_n''(s)}{\theta_n(s)}\,ds \\ & =  & \frac{x \theta_n'(x)}{\theta_n(x)}  - \frac{y \theta_n'(y)}{\theta_n(y)} + \int_x^y \frac{\theta_n'(s)}{\theta_n(s)}\left(1 - \frac{s\theta_n'(s)}{\theta_n(s)}\right)\,ds 
\\ & \ge & -\frac{y \theta_n'(y)}{\theta_n(y)} + (1-c) \int_x^y \frac{\theta_n'(s)}{\theta_n(s)}\,ds 
\\ &= & -\frac{y \theta_n'(y)}{\theta_n(y)} + (1-c)\log \theta_n(y) - (1-c)\log \theta_n(x)
\\ & = & -\frac{y \theta_n'(y)}{\theta_n(y)} + (1-c)\log \theta(y) - (1-c)\log \theta(x)\,.\end{eqnarray*}
Taking the limit as $n \to \infty$, we have
$$ c > \lambda^{-1} \int \mathbf{1}(x < s < y)\,s \,d\pi(s) \ge -\frac{y\theta_l'(y)}{\theta(y)} + (1-c)\log \frac{\theta(y)}{\theta(x)}\,.$$
Taking the limit as  $y \to \infty$, we find 
$$ c \ge (1-c)\log\frac{\theta(\infty)}{\theta(x_0)}\,,$$
or equivalently
\begin{equation}\label{eq: explicit theta bound in proof} \theta(\infty) \le \theta(x_0) e^{c/(1-c)}\,.\end{equation}
This completes the proof of claim 1, and consequently also claim 2. 

Now suppose that $\theta$ is bounded and $\theta_r'(0) < \infty$, so that $\int x\,d\pi(x) < \infty$. The definition of $\mathcal{L}_\pi$ and most of the proof of Lemma~\ref{L: operator properties} do not rely on the total mass of $\pi$ being equal to $1$. In fact, when $\pi$ is any positive Borel measure on $(0,\infty)$ such that $\int x \,d\pi(x) < \infty$, we may still define $\mathcal{L}_\pi$ as a self-adjoint Hilbert-Schmidt integral operator on $L^2(\pi)$, and it still has a simple principal eigenvalue $\tilde{\lambda}$ with an associated non-negative  eigenfunction $\tilde{\theta}$ on $(0,\infty)$.  The only place where the finiteness of $\pi$ is used in the proof of Lemma~\ref{L: operator properties} is to ensure that the chosen normalization is possible. Equation~\eqref{eq: eigenvalue equation with s} says that $\theta$ is also an eigenfunction of $\mathcal{L}_\pi$, with eigenvalue $\lambda$. Since $\theta$ is non-negative and the eigenfunctions of $\mathcal{L}_\pi$ belonging to distinct eigenvalues are orthogonal in $L^2(\pi)$, we must have $\lambda = \tilde{\lambda}$. Since the principal eigenvalue of $\lambda$ is simple, $\theta$ agrees with $\tilde{\theta}$ up to multiplication by a positive scalar.

\end{proof}

\subsection{Explicit bounds on $\lambda$ and $\theta$}
In this section we suppose that $\pi \in \mathcal{P}_1([0,\infty))$ is a positive Borel probability measure on $[0,\infty)$ with first moment $m = \int x \,d\pi(x) < \infty$, and we assume $\pi$ is not supported on $\{0\}$, so that $\mathcal{L}_\pi$ is non-trivial. Let $\lambda$ be the leading eigenvalue of $\mathcal{L}_\pi$, with eigenfunction $\theta$ normalized by $\int \theta(x) \,d\pi(x) = 1$. We may write $\pi = p\delta_0 + \pi_{+}$ where $p \ge 0$ and $\pi_{+}$ is a non-trivial positive Borel measure on $(0,\infty)$. Then $\mathcal{L}_{\pi} = \mathcal{L}_{\pi_{+}}$ and their normalized leading eigenfunctions coincide. The normalization conditions are identical because the eigenfunctions take the value $0$ at $0$.

We have $$ \lambda \theta(y) = \int (x \wedge y) \theta(x) d\pi(x) \le y \int \theta(x) \,d\pi(x) = y$$
Therefore \begin{equation}\label{eq: theta slope bound}\theta(y) \le y/\lambda \quad\text{ for all $y$}\,.\end{equation}
The limit $\theta(\infty) = \lim_{x \to\infty} \theta(x)$ can be arbitrarily large, as we saw from the example $\pi = (1-p)\delta_0 + p\delta_{1/p}$, where $\theta(\infty) = 1/p$. But from the proof of Proposition~\ref{P: theta-pi correspondence} we may extract the following explicit bound on $\theta(\infty)$ in terms of the tail of $\pi$.
\begin{lemma} \label{L: explicit theta bound} Let $\pi \in \mathcal{P}_1([0,\infty))\setminus\{\delta_0\}$ and let $\mathcal{L}_\pi$ have leading eigenvalue $\lambda$ and leading eigenfunction $\theta$ normalized by $\int \theta(x)\,d\pi(x) = 1$. Let $0 < C < 1$ and suppose $$\int \mathbf{1}(x > x_0) \,x\, d\pi(x) \le \lambda C\,.$$ Then \begin{equation}\label{eq: explicit theta bound}\theta(\infty) \le \frac{x_0}{\lambda} \,\exp(C/(1-C))\,.\end{equation}
\end{lemma}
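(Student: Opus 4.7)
The plan is to extract the explicit bound directly from the argument already carried out in the proof of Proposition~\ref{P: theta-pi correspondence}, and then combine it with the elementary slope bound $\theta(y) \le y/\lambda$ from equation~\eqref{eq: theta slope bound}.

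More precisely, inspecting the tail-end of the proof of Proposition~\ref{P: theta-pi correspondence}, I would observe that the inequality~\eqref{eq: explicit theta bound in proof}, namely
\[\theta(\infty) \le \theta(x_0)\,e^{C/(1-C)},\]
was established under the exact hypothesis of the present lemma: choose $c = C$, pick the same $x_0$, and rerun the integration-by-parts argument verbatim. The argument there used a piecewise-linear/smooth concave approximation $\theta_n \nearrow \theta$ on an interval $[x,y]$ with $x_0 \le x < y$, the bound~\eqref{eq: quotient bound} giving $\sup_{s \ge x_0} s\theta_r'(s)/\theta(s) \le C$, and the transfer of this sup to the approximants via~\eqref{eq: approximation}. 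After integration by parts on $[x,y]$ with respect to the approximating measure $\pi_n$ and taking $n\to\infty$, then $y\to\infty$ and $x \searrow x_0$, one arrives at $(1-C)\log(\theta(\infty)/\theta(x_0)) \le C$, which is~\eqref{eq: explicit theta bound in proof}.

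To convert this into the stated bound~\eqref{eq: explicit theta bound}, I would simply apply the elementary inequality~\eqref{eq: theta slope bound}, which gives $\theta(x_0) \le x_0/\lambda$. Substituting yields
\[\theta(\infty) \le \theta(x_0)\,e^{C/(1-C)} \le \frac{x_0}{\lambda}\,e^{C/(1-C)},\]
as required. Since Proposition~\ref{P: theta-pi correspondence} is already proved, there is nothing further to verify; the only genuine ``step'' is to notice that the quantity $\theta(x_0)$ appearing in the proposition's proof is, for a \emph{normalized} principal eigenfunction, pointwise dominated by $x_0/\lambda$.

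No serious obstacle is expected, as all the nontrivial work (the approximation-plus-integration-by-parts) is already in place in the preceding proposition; this lemma is essentially a repackaging that records the explicit constant in a form convenient for the later sections, where the tail of $\pi_t$ will be controlled uniformly in $t$ by Lemma~\ref{L: mean age stays finite} and the neighborhood system $\mathcal{P}_{r,c}$.
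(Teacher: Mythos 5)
Your proposal is correct and is essentially the paper's own proof: extract inequality~\eqref{eq: explicit theta bound in proof} from the proof of Proposition~\ref{P: theta-pi correspondence} and combine it with the elementary slope bound $\theta(x_0)\le x_0/\lambda$ from~\eqref{eq: theta slope bound}. The only (cosmetic) difference is that the paper avoids the strict-inequality requirement in the proposition's argument -- which asks for $\int\mathbf{1}(x>x_0)x\,d\pi(x)<\lambda c$ -- by running it for each $c\in(C,1)$ and then letting $c\searrow C$, whereas you propose to run it directly at $c=C$; that also works provided one is slightly careful that the approximants satisfy the sup bound only up to an error $\epsilon_n\to 0$, a point absorbed in the $n\to\infty$ limit.
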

\begin{proof} For any $c \in (C,1)$ we have $\int \mathbf{1}(x > x_0) x\,d\pi(x) < \lambda C$ so  \eqref{eq: explicit theta bound in proof} holds. Combining this with \eqref{eq: theta slope bound}  and taking the limit as $c \searrow C$ we obtain~\eqref{eq: explicit theta bound}.
\end{proof}

\begin{lemma}\label{L: lambda lower bound} Let $\pi \in \mathcal{P}_1([0,\infty))$ and let $\lambda$ be the leading eigenvalue of $\mathcal{L}_\pi$. For every $x > 0$ we have $$\lambda \ge x \pi([x,\infty))\,.$$
\end{lemma}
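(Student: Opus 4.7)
The plan is to exploit the variational (Rayleigh quotient) characterization of the leading eigenvalue of the positive semidefinite self-adjoint operator $\mathcal{L}_\pi$, and then test this quotient against a cleverly chosen indicator function. By Lemma~\ref{L: operator properties}~\eqref{L:self_adjoint_pos}~and~\eqref{L:principal} together with the spectral theorem for compact self-adjoint operators, we have
\begin{equation*}
\lambda \;=\; \|\mathcal{L}_\pi\| \;=\; \sup_{f \in L^2(\pi),\, \|f\|_\pi = 1} \langle \mathcal{L}_\pi f, f\rangle_\pi\,.
\end{equation*}

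Fix $x > 0$. If $\pi([x,\infty)) = 0$ the claim is trivial, so assume $\pi([x,\infty)) > 0$ and take the test function $f = \pi([x,\infty))^{-1/2}\,\mathbf{1}_{[x,\infty)}$, which has $\|f\|_\pi = 1$. Then
\begin{equation*}
\langle \mathcal{L}_\pi f, f\rangle_\pi \;=\; \frac{1}{\pi([x,\infty))}\,\int\!\!\int \mathbf{1}(y \ge x)\mathbf{1}(s \ge x)\,(y \wedge s)\,d\pi(y)\,d\pi(s)\,.
\end{equation*}
On the integration region $y,s \ge x$ we have $y \wedge s \ge x$, so the double integral is bounded below by $x\,\pi([x,\infty))^2$, giving $\langle \mathcal{L}_\pi f, f\rangle_\pi \ge x\,\pi([x,\infty))$, and hence $\lambda \ge x\,\pi([x,\infty))$.

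There is no real obstacle here: the inequality is essentially a one-line Rayleigh-quotient estimate. The only point worth checking is that the indicator $\mathbf{1}_{[x,\infty)}$ indeed lies in $L^2(\pi)$, which is automatic since $\pi$ is a finite measure.
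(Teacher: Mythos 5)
Your proof is correct, but it takes a genuinely different route from the paper's. The paper works directly with the eigenvalue identity~\eqref{eq: theta e'vec}: it evaluates $\lambda\theta(x) = \int (x\wedge y)\theta(y)\,d\pi(y)$, restricts to the region $y \ge x$ (where $x\wedge y = x$), and uses that $\theta$ is increasing and positive to get $\lambda\theta(x) \ge x\theta(x)\pi([x,\infty))$, then cancels $\theta(x)>0$. You instead invoke the Rayleigh-quotient characterization $\lambda = \sup_{\|f\|_\pi=1}\langle\mathcal{L}_\pi f, f\rangle_\pi$ and test with the normalized indicator $\pi([x,\infty))^{-1/2}\mathbf{1}_{[x,\infty)}$. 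Your argument needs nothing about the eigenfunction $\theta$ at all --- only positive semidefiniteness and self-adjointness of $\mathcal{L}_\pi$ together with $\lambda=\|\mathcal{L}_\pi\|$ --- whereas the paper's argument leans on the qualitative properties (positivity, monotonicity) of $\theta$ that it has already established in Lemma~\ref{L: operator properties}. Both are one-line computations once set up; the variational approach is a bit more self-contained and generic, while the paper's version is more in keeping with the style of directly manipulating the integral equation for $\theta$ that runs throughout the section.
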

\begin{proof}
We have
\begin{eqnarray*} \lambda \theta(x)  =  \int (x \wedge y) \theta(y)\,d\pi(y)  & \ge & \int \mathbf{1}(y \ge x) (x \wedge y) \theta(y)\,d\pi(y)\\ & \ge & \int \mathbf{1}(y \ge x) x\theta(x) \,d\pi(y) = x \theta(x) \pi([x,\infty))\,. 
\end{eqnarray*}
Since $\theta(x) > 0$ we obtain the desired inequality.
\end{proof}

%\begin{lemma}\label{L: lambda locally bounded below}
% The leading eigenvalue $\lambda$ of $\mathcal{L}_\pi$ is locally uniformly bounded away from $0$ on $\mathcal{P}_1([0,\infty)) \setminus \{\delta_0\}$.
%\end{lemma}
%\begin{proof}
% Suppose $\pi \in \mathcal{P}_1([0,\infty)) \setminus \{\delta_0\}$. There exist $x > 0$ and $\epsilon > 0$ such that $\pi([x,\infty)) > \epsilon$. Now suppose $W(\tilde{\pi}, \pi) < x\epsilon/4$, and let $\tilde{\lambda}$ be the leading eigenvalue of $\mathcal{L}_{\tilde{\pi}}$. Then we have $\tilde{\pi}(x/2, \infty) > \epsilon/2$, for otherwise the cumuluative distribution functions of $\pi$ and $\tilde{\pi}$ would differ by at least $\epsilon/2$ throughout $(x/2,x)$, so that $W(\tilde{\pi},\pi)$ would be at least $x\epsilon/4$. Hence by Lemma~\ref{L: lambda lower bound} we have $\tilde{\lambda} \ge x\epsilon^2/8.$
%\end{proof}

\subsection{$\lambda$ is a Lipschitz function of $\pi$ with respect to $W_1$}
\begin{lemma}\label{L: f theta is Lip if f is Lip}
 Let $\pi \in \mathcal{P}_1([0,\infty))\setminus\{\delta_0\}$. Suppose $f: [0,z] \to \mathbb{R}$ is $1$-Lipschitz with $f(0) = 0$. Then $x \mapsto f(x)\theta(x)$ is $2\theta(z)$-Lipschitz on $[0,z]$. If $f: [0,\infty) \to \mathbb{R}$ is $1$-Lipschitz with $f(0) = 0$ then $x \mapsto f(x)\theta(x)$ is $2\theta(\infty)$-Lipschitz on $[0,\infty)$.
\end{lemma}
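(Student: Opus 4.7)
My plan is a direct telescoping estimate using two key properties established earlier: (i) $\theta$ is increasing with $\theta(0) = 0$, and (ii) $\theta$ is concave, which by the secant-slope characterization gives $(\theta(y) - \theta(x))/(y-x) \le \theta(x)/x$ for any $0 < x < y$ (since the slope of chords from $(0,0)$ dominates the slope of chords to the right of $x$).

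For $0 \le x < y \le z$, I would write
$$ f(y)\theta(y) - f(x)\theta(x) = (f(y) - f(x))\theta(y) + f(x)(\theta(y) - \theta(x)) $$
and estimate the two pieces separately. The first piece is bounded using the Lipschitz property of $f$ and monotonicity of $\theta$:
$$ |f(y) - f(x)|\,\theta(y) \le (y-x)\,\theta(z). $$
For the second piece, $f(0)=0$ and the $1$-Lipschitz property give $|f(x)| \le x$; combining with the concavity bound $\theta(y) - \theta(x) \le (y-x)\theta(x)/x$ (valid for $x > 0$), I get
$$ |f(x)|\,(\theta(y) - \theta(x)) \le x \cdot \frac{(y-x)\theta(x)}{x} = (y-x)\theta(x) \le (y-x)\theta(z). $$
The boundary case $x=0$ is handled separately (or by a limit): $|f(y)\theta(y)| \le y\theta(y) \le (y-0)\theta(z)$, which is even stronger. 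Summing the two bounds yields $|f(y)\theta(y) - f(x)\theta(x)| \le 2(y-x)\theta(z)$, as required.

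For the unbounded case $f: [0,\infty) \to \mathbb{R}$, the same argument goes through verbatim after replacing the upper bound $\theta(z)$ with $\theta(\infty) = \lim_{x \to \infty}\theta(x)$ (which may be infinite, in which case the statement is vacuous, but is finite under the bounded-$\theta$ hypothesis).

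I do not anticipate a genuine obstacle here; the only subtlety is remembering that the key concavity estimate $\theta(y) - \theta(x) \le (y-x)\theta(x)/x$ requires $\theta(0) = 0$, which is one of the standing facts about $\theta$ established in the Sturm-Liouville correspondence subsection. The factor of $2$ in the Lipschitz constant is sharp for this method and comes from adding the two pieces, each contributing $(y-x)\theta(z)$.
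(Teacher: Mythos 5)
Your proof is correct and takes essentially the same approach as the paper's: a product-rule telescoping of $f\theta$, the bound $|f(t)|\le t$ from $f(0)=0$ and $1$-Lipschitzness, and the concave-chord estimate stemming from $\theta(0)=0$. The only (immaterial) difference is the choice of middle term — you insert $f(x)\theta(y)$ and use $\theta(y)-\theta(x)\le (y-x)\theta(x)/x$, while the paper inserts $f(y)\theta(x)$ and uses $\theta(y)-\theta(x)\le (y-x)\theta(y)/y$, the latter having the tiny advantage of remaining valid at $x=0$ without a separate case.
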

\begin{proof}
For $0 \le x < y \le z$ we have $ 0 \le \theta(y) - \theta(x) \le \frac{(y-x)\theta(y)}{y}$, because $\theta$ is increasing and concave.
Hence
\begin{eqnarray*}|f(y)\theta(y) - f(x) \theta(x)| & \le & |f(y)(\theta(y) - \theta(x))| + |\theta(x)(f(y) - f(x))|\\
& \le & y\,.\,\frac{(y-x)\theta(y)}{y} + \theta(x)|y-x| \\ & = & 2 \theta(z) |y-x|\,.
\end{eqnarray*}
\end{proof}
\begin{lemma}\label{L: useful function is Lip}
For each fixed $x \ge 0$, the function $f_x: y \mapsto \theta(y)(x \wedge y)$ is Lipschitz with constant $2\theta(x)$. 
\end{lemma}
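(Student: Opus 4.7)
The plan is to reduce this to the previous Lemma (on $f\theta$ being Lipschitz when $f$ is Lipschitz) applied on the subinterval $[0,x]$, and then handle the interval $[x,\infty)$ separately by a direct concavity argument.

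First I split the domain of $f_x$ into the two pieces $[0,x]$ and $[x,\infty)$. On $[0,x]$ the function $f_x(y) = (x \wedge y)\theta(y) = y\theta(y)$, and the map $y \mapsto y$ is $1$-Lipschitz on $[0,x]$ with value $0$ at $0$. So by the preceding Lemma~\ref{L: f theta is Lip if f is Lip} applied with $z=x$, the restriction of $f_x$ to $[0,x]$ is $2\theta(x)$-Lipschitz.

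Next I treat the interval $[x,\infty)$, where $f_x(y) = x\theta(y)$. Here I use that $\theta$ is concave with $\theta(0)=0$, which forces $y \mapsto \theta(y)/y$ to be decreasing on $(0,\infty)$ and hence $\theta_l'(y) \le \theta(y)/y \le \theta(x)/x$ for every $y \ge x$. Consequently, for $x \le y_1 \le y_2$, $|\theta(y_2) - \theta(y_1)| \le (y_2-y_1)\theta(x)/x$, giving $|f_x(y_2)-f_x(y_1)| \le (y_2-y_1)\theta(x)$, which is better than $2\theta(x)$-Lipschitz.

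Finally, for $y_1 \in [0,x]$ and $y_2 \in [x,\infty)$, I insert $x$ and apply the triangle inequality: the two pieces contribute at most $2\theta(x)(x-y_1)$ and $\theta(x)(y_2-x)$ respectively, giving the combined bound $2\theta(x)(y_2-y_1)$. There is no real obstacle; the only mild subtlety is to remember that the concavity bound $\theta(y_2)-\theta(y_1) \le (y_2-y_1)\theta(y_2)/y_2$ used in the previous lemma must be replaced on $[x,\infty)$ by the sharper $(y_2-y_1)\theta(x)/x$, since we want the Lipschitz constant pinned to $x$ rather than $\infty$ (the function $\theta$ itself need not be bounded).
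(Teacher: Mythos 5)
Your proof is correct and follows essentially the same route as the paper's: split at $y=x$, apply Lemma~\ref{L: f theta is Lip if f is Lip} to $y\theta(y)$ on $[0,x]$, and use concavity (with $\theta(0)=0$) to show $\theta$ is $\theta(x)/x$-Lipschitz on $[x,\infty)$, giving $f_x$ a Lipschitz constant of $\theta(x)$ there. The only cosmetic difference is that the paper bounds the slope of $\theta$ on $[x,\infty)$ by $\theta'_r(x)$ and then uses $x\theta'_r(x)\le\theta(x)$, whereas you go directly through $\theta(y)/y\le\theta(x)/x$; both are equivalent, and your explicit merge of the two pieces by inserting $x$ is a step the paper leaves implicit.
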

\begin{proof}
On $[x,\infty)$, $f_x(y) = x\theta(y)$ which is $x\theta'_r(x)$-Lipschitz since $\theta$ is concave and increasing. But $x\theta'_r(x) \le \theta(x)$ since $\theta$ is concave and $\theta(0) = 0$. So on $[x,\infty)$, $f_x$ is $\theta(x)$-Lipschitz.

On $[0,x]$, $f_x(y) = y\theta(y)$ and we apply Lemma~\ref{L: f theta is Lip if f is Lip}.
\end{proof}

The next lemma shows that $\lambda$ is a $2$-Lipschitz functional of $\pi$ with respect to the $W_1$ metric. \begin{lemma}\label{L: lambda Lipschitz bound}
 Suppose $\pi$, $\tilde{\pi} \in \mathcal{P}_1([0,\infty)) \setminus\{\delta_0\}$, and let $\theta, \tilde{\theta}$ be the normalized leading eigenfunctions of $\mathcal{L}_{\pi},  \mathcal{L}_{\tilde{\pi}}$ with eigenvalues $\lambda, \tilde{\lambda}$ respectively. 
Then
\begin{equation}\label{eq: lambda Lipschitz bound}\tilde{\lambda} \ge \lambda - 2 W_1(\tilde{\pi},\pi)\,.\end{equation}
\end{lemma}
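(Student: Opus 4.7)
The plan is to derive~\eqref{eq: lambda Lipschitz bound} from the variational (Rayleigh quotient) characterization of $\tilde\lambda$, using the principal eigenfunction $\theta$ of $\mathcal{L}_\pi$ as a test function in $L^2(\tilde\pi)$. Since $\pi \in \mathcal{P}_1([0,\infty))\setminus\{\delta_0\}$, Lemma~\ref{L: explicit theta bound} guarantees that $\theta$ is bounded, so $\theta \in L^2(\tilde\pi)$. Moreover $\theta > 0$ on $(0,\infty)$ and $\tilde\pi((0,\infty)) > 0$ because $\tilde\pi \neq \delta_0$, so $\|\theta\|_{\tilde\pi} > 0$. The operator $\mathcal{L}_{\tilde\pi}$ is positive semidefinite self-adjoint, so
$$\tilde\lambda \;=\; \|\mathcal{L}_{\tilde\pi}\|_{L^2(\tilde\pi)} \;\geq\; \frac{\langle\theta,\mathcal{L}_{\tilde\pi}\theta\rangle_{\tilde\pi}}{\|\theta\|_{\tilde\pi}^2}.$$
Thus it suffices to establish the pointwise lower bound
$$\mathcal{L}_{\tilde\pi}\theta(x) \;\geq\; \bigl(\lambda - 2W_1(\tilde\pi,\pi)\bigr)\theta(x) \qquad\text{for every } x \geq 0,$$
since then integrating against the nonnegative function $\theta$ with respect to $\tilde\pi$ yields $\langle\theta,\mathcal{L}_{\tilde\pi}\theta\rangle_{\tilde\pi} \geq (\lambda - 2W_1(\tilde\pi,\pi))\|\theta\|_{\tilde\pi}^2$.

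The pointwise bound is where $W_1$ duality enters. For fixed $x \geq 0$ the function $g_x(y) := (x \wedge y)\theta(y)$ is bounded (since $\theta$ is bounded) and, by Lemma~\ref{L: useful function is Lip}, is $2\theta(x)$-Lipschitz on $[0,\infty)$. Applying the Kantorovich--Rubinstein duality~\eqref{eq: W1 dual formulation} to $g_x/(2\theta(x))$ gives
$$\left|\int g_x \,d\tilde\pi - \int g_x\,d\pi\right| \;\leq\; 2\theta(x)\,W_1(\tilde\pi,\pi).$$
The integral against $\pi$ equals $\mathcal{L}_\pi\theta(x) = \lambda\theta(x)$, while the integral against $\tilde\pi$ equals $\mathcal{L}_{\tilde\pi}\theta(x)$. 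Rearranging yields precisely the required pointwise inequality.

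There is no real obstacle here; the argument is essentially an assembly of previously established ingredients. The two points that must be handled carefully are the use of Lemma~\ref{L: useful function is Lip} (which supplies the constant $2$ appearing in~\eqref{eq: lambda Lipschitz bound}) and the fact that $\theta$ belongs to $L^2(\tilde\pi)$ as a nonzero element, both of which rely only on boundedness of $\theta$ and the assumption $\tilde\pi \neq \delta_0$. If $\lambda - 2W_1(\tilde\pi,\pi) \leq 0$ the inequality is vacuous, since $\tilde\lambda \geq 0$, so the argument functions uniformly in the size of $W_1(\tilde\pi,\pi)$.
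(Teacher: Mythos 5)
Your proof is correct and follows essentially the same route as the paper's: both use the pointwise bound $\mathcal{L}_{\tilde{\pi}}\theta(x) \ge (\lambda - 2W_1(\tilde{\pi},\pi))\theta(x)$ obtained from Lemma~\ref{L: useful function is Lip} together with Kantorovich--Rubinstein duality, then test $\theta$ against $\mathcal{L}_{\tilde{\pi}}$. The only cosmetic difference is that you bound $\tilde{\lambda}$ from below by the Rayleigh quotient $\langle\theta,\mathcal{L}_{\tilde{\pi}}\theta\rangle_{\tilde{\pi}}/\|\theta\|_{\tilde{\pi}}^2$, while the paper uses $\|\mathcal{L}_{\tilde{\pi}}\theta\|_{\tilde{\pi}}/\|\theta\|_{\tilde{\pi}}$; both are valid for the positive semidefinite self-adjoint operator $\mathcal{L}_{\tilde{\pi}}$.
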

\begin{proof}
 Since $\theta$ is bounded and continuous it represents a non-negative non-trivial element of $L^2(\tilde{\pi})$.
Using Lemma~\ref{L: useful function is Lip} we have 
\begin{eqnarray*}\mathcal{L}_{\tilde{\pi}} \theta(x) &  = & \int \theta(y) (x \wedge y) \,d\tilde{\pi}(y) \\ & \ge & \int \theta(y) (x \wedge y) \, d\pi(y) - 2\theta(x) W_1(\tilde{\pi},\pi)\\ & = & \left(\lambda - 2W_1\left(\tilde{\pi},\pi\right)\right) \theta(x)\,.\end{eqnarray*}
Therefore
 $$\tilde{\lambda}  = \|\mathcal{L}_{\tilde{\pi}}\| \ge \frac{ \|\mathcal{L}_{\tilde{\pi}}\theta\|_{\tilde{\pi}}}{\|\theta\|_{\tilde{\pi}}} \ge \lambda - 2W_1\left(\tilde{\pi},\pi\right)\,.$$
\end{proof}

\begin{lemma}\label{L: theta locally uniformly bounded}
The mapping $\pi \mapsto \theta(\infty)$ is locally bounded on $\mathcal{P}_1([0,\infty))$.
\end{lemma}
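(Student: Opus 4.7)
The plan is to produce, for each $\pi \in \mathcal{P}_1([0,\infty))\setminus\{\delta_0\}$, an explicit $W_1$-ball around $\pi$ on which $\tilde\theta(\infty)$ will admit a uniform upper bound depending only on $\pi$. The three ingredients already at hand are: Lemma~\ref{L: explicit theta bound}, which converts a tail-to-eigenvalue condition $\int \mathbf{1}(x > x_0)\,x\,d\tilde\pi(x) \le C\tilde\lambda$ with $C \in (0,1)$ into the bound $\tilde\theta(\infty) \le (x_0/\tilde\lambda)\exp(C/(1-C))$; Lemma~\ref{L: lambda Lipschitz bound}, which supplies the lower Lipschitz bound $\tilde\lambda \ge \lambda_0 - 2W_1(\tilde\pi,\pi)$; and Lemma~\ref{L: metric nestedness}, which propagates membership of the nested sets $\mathcal{P}_{r,c}$ under $W_1$-perturbations.

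To execute, I will set $\lambda_0 := \lambda(\pi) > 0$, which is strictly positive because $\pi \neq \delta_0$. Since $\pi$ has finite mean I can then choose $x_0 > 0$ large enough that $\int \mathbf{1}(x \ge x_0)\,x\,d\pi(x) \le \lambda_0/16$, so that $\pi \in \mathcal{P}_{x_0,\lambda_0/16}$. I then restrict attention to the $W_1$-ball $U := \{\tilde\pi : W_1(\tilde\pi,\pi) < \lambda_0/32\}$. For any $\tilde\pi \in U$, Lemma~\ref{L: lambda Lipschitz bound} gives $\tilde\lambda \ge 15\lambda_0/16$, while Lemma~\ref{L: metric nestedness} with $r = x_0$, $r' = 2x_0$ (so that $r'/(r'-r) = 2$) gives $\tilde\pi \in \mathcal{P}_{2x_0,\,\lambda_0/8}$, i.e.\ $\int \mathbf{1}(x > 2x_0)\,x\,d\tilde\pi(x) < \lambda_0/8 \le (2/15)\tilde\lambda$. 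Lemma~\ref{L: explicit theta bound} applied with threshold $2x_0$ and $C = 2/15$ should then yield
\[
  \tilde\theta(\infty) \;\le\; \frac{2x_0}{\tilde\lambda}\exp\!\left(\frac{2/15}{1 - 2/15}\right) \;\le\; \frac{32\,x_0}{15\,\lambda_0}\,e^{2/13},
\]
a bound depending only on $\pi$ and valid uniformly on $U$.

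The only real subtlety is the bookkeeping needed to simultaneously control $\tilde\lambda$ from below and the tail of $\tilde\pi$ from above; the genuine analytic work has been packaged into Lemma~\ref{L: explicit theta bound}. I should note that the above bound degenerates as $\tilde\lambda \to 0$, and the example $\pi_q := (1-q)\delta_0 + q\delta_1$, for which a direct calculation gives $\tilde\theta(\infty) = 1/q$ while $W_1(\pi_q,\delta_0) = q$, shows this degeneration is genuinely unavoidable: $\theta(\infty)$ really is unbounded in every $W_1$-neighborhood of $\delta_0$. Hence the statement must be interpreted as local boundedness on $\mathcal{P}_1([0,\infty))\setminus\{\delta_0\}$, consistent with the formulation given in the introduction.
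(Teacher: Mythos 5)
Your proof is correct and follows essentially the same strategy as the paper's: pick $r$ and $c$ with $\pi\in\mathcal{P}_{r,c}$ for a suitable small $c$ relative to $\lambda_0$, propagate both the eigenvalue lower bound (Lemma~\ref{L: lambda Lipschitz bound}) and the tail bound (Lemma~\ref{L: metric nestedness}) to a $W_1$-ball around $\pi$, then invoke Lemma~\ref{L: explicit theta bound}; the only differences are the particular constants chosen. Your closing remark, with the example $\pi_q=(1-q)\delta_0+q\delta_1$ showing that $\theta(\infty)$ is unbounded on every $W_1$-neighborhood of $\delta_0$, is correct and a worthwhile clarification of the precise scope of the statement.
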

\begin{proof}
Let $\pi \in \mathcal{P}_1\setminus \{\delta_0\}$.  We may choose $r < \infty$ such that $\pi \in \mathcal{P}_{r,\lambda/4}$. Suppose $W_1(\pi,\tilde{\pi}) < \lambda/8$.  Then by Lemma~\ref{L: metric nestedness} with $r' = 2r$ we have $\tilde{\pi} \in \mathcal{P}_{2r, \lambda/2}$.  Let the normalized leading eigenfunction of $\mathcal{L}_{\tilde{\pi}}$ be $\tilde{\theta}$ with leading eigenvalue $\tilde{\lambda}$. By Lemma~\ref{L: lambda Lipschitz bound} we have $\tilde{\lambda} > 3\lambda/4$. 
Since $\tilde{\pi} \in \mathcal{P}_{2r, \lambda/2}$, we have 
$$ \int \mathbf{1}(x \ge 2r) x \,d\tilde{\pi}(x) \le \frac{\lambda}{2} \le \frac{3\tilde{\lambda}}{8} < \frac{\tilde{\lambda}}{2}\,,$$
so applying Lemma~\ref{L: explicit theta bound} with $C=1/2$ gives us
$$ \tilde{\theta}(\infty) \le \frac{2 e r}{\tilde{\lambda}} \le \frac{8 e r}{3\lambda} \,.$$
\end{proof}

\subsection{Initial control of solutions}

Theorem~\ref{T: well-posedness theorem} is concerned with solutions of \eqref{eq: IVP} where in fact $\varphi(t) \le 1$ because of the condition \eqref{eq: phi defined by theta}. For the moment we consider solutions where $\varphi: [0,T] \to [0,1]$ is an arbitrary continuous control function, not necessarily agreeing with $\Phi(\pi_t)$.

\begin{lemma}\label{L: theta bounded for time 1/8}  Let $\pi_t$ be a solution of \eqref{eq: IVP} over a compact time interval $[0,T]$ with initial condition  $\pi_0 \in \mathcal{P}_1([0,\infty)) \setminus \{\delta_0\}$, for any continuous control function $\varphi(t): [0,T] \to [0,1]$. Then $\lambda_t$ is bounded away from $0$ over $t \in [0,T]$ with a positive lower bound that depends only on $\pi_0$, and $\theta_t$ is uniformly bounded over $t \in [0, T]$, with a bound that depends only on $\pi_0$ and $T$. 
\end{lemma}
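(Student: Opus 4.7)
The plan is to combine a survival estimate for an initial tail of $\pi_0$ under the forest fire dynamics with the uniform integrability of Lemma~\ref{L: mean age stays finite} and the local boundedness of $\theta(\infty)$ from Lemma~\ref{L: theta locally uniformly bounded}. The key mechanism is a compactness argument: the trajectory $\{\pi_t:t\in[0,T]\}$ will turn out to be a compact subset of $\mathcal{P}_1([0,\infty))\setminus\{\delta_0\}$ on which $\theta(\infty)$ is automatically uniformly bounded.

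First, since $\pi_0\neq\delta_0$, I choose $a>0$ with $p_0:=\pi_0([a,\infty))>0$. Applying \eqref{eq: wave test function} with $C_0^1(\mathbb{R})$ test functions approximating $\mathbf{1}_{[a,\infty)}$ from below yields
\[\frac{d}{dt}\pi_t([a+t,\infty))=-\varphi(t)\!\int_{[a+t,\infty)}\!\theta_t(s)\,d\pi_t(s)\ge -\theta_t(\infty)\,\pi_t([a+t,\infty)),\]
using $\varphi\le 1$ and $\theta_t(s)\le\theta_t(\infty)$. Integrating this differential inequality gives $\pi_t([a+t,\infty))\ge p_0 e^{-H(t)}$ where $H(t):=\int_0^t\theta_u(\infty)\,du$, and Lemma~\ref{L: lambda lower bound} applied at $y=a+t$ yields $\lambda_t\ge ap_0 e^{-H(t)}$. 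So a bound on $H(T)$ translates directly into the desired lower bound on $\lambda_t$.

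Next, I verify that $t\mapsto\pi_t$ is $W_1$-continuous on $[0,T]$. Weak continuity is immediate from the integral form \eqref{eq: travelling wave integral form}, whose integrand is uniformly bounded (using $\varphi\le 1$, the normalization $\int\theta_u\,d\pi_u=1$, and the regularity of the test function). Combined with the uniform integrability of $\{\pi_t\}_{t\in[0,T]}$ from Lemma~\ref{L: mean age stays finite}, weak continuity upgrades to $W_1$-continuity of the first moment, hence to $W_1$-continuity of $\pi_t$. Because \eqref{eq: IVP} requires $\theta_t$ to be defined, $\pi_t\neq\delta_0$ for every $t\in[0,T]$, so the trajectory is a continuous image of $[0,T]$ inside $\mathcal{P}_1\setminus\{\delta_0\}$, and is therefore compact in the $W_1$-topology. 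Lemma~\ref{L: theta locally uniformly bounded} supplies a $W_1$-ball of boundedness of $\theta(\infty)$ around each point of the trajectory; a finite subcover extracts a uniform bound $\theta_t(\infty)\le M$ on $[0,T]$ with $M=M(\pi_0,T)$. Substituting into the first step, $H(T)\le MT$ and hence $\lambda_t\ge ap_0 e^{-MT}$.

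The main obstacle is closing the bootstrap between the two estimates (lower bound on $\lambda_t$, upper bound on $\theta_t(\infty)$) without already assuming one of them: a naive Gr\"onwall on the resulting inequality $H'(t)\le K e^{H(t)}$ only controls $H$ for $t<1/K$, which does not cover an arbitrary interval $[0,T]$. The resolution is that the compactness of the trajectory in $\mathcal{P}_1\setminus\{\delta_0\}$ can be obtained independently of any bound on $\theta_t(\infty)$, since weak continuity and the uniform integrability from Lemma~\ref{L: mean age stays finite} are already available; once compactness is secured, the local-boundedness of $\theta(\infty)$ delivers the uniform bound in one step.
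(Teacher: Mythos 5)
Your proposal takes a genuinely different route from the paper. The paper's proof is entirely explicit: it first shows $\pi_t([x_1+t,\infty))\ge \pi_0([x_1,\infty))-t$ (giving a lower bound on $\lambda_t$ for small $t$, depending on $\pi_0$), then makes the clever separate observation that $\pi_t([0,s))\le s$ for $0\le s\le t$ (since mass can only be injected at $0$ at rate $\varphi(t)\le 1$), which via Lemma~\ref{L: lambda lower bound} gives $\lambda_t\ge t(1-t)$ for $t\le 1/2$ and $\lambda_t\ge 1/4$ for $t\ge 1/2$. Combining yields a lower bound $b(\pi_0)>0$ on $\lambda_t$ that depends only on $\pi_0$. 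Then it produces $x_0=x_0(\pi_0)$ controlling the tail of $\pi_t$ uniformly and applies the explicit bound of Lemma~\ref{L: explicit theta bound}. Your approach instead establishes $W_1$-continuity of $t\mapsto\pi_t$ directly from the integral form (which uses only $\varphi\le1$ and $\int\theta_u\,d\pi_u=1$, so it does not pre-suppose $\theta_t$ is bounded) together with the uniform integrability of Lemma~\ref{L: mean age stays finite}, then uses compactness of the trajectory inside $\mathcal P_1\setminus\{\delta_0\}$ to apply Lemma~\ref{L: theta locally uniformly bounded} via a finite subcover. This is a legitimate way to break the bootstrap. Your argument is also self-redundant in a pleasant way: once you have compactness, you do not even need your first step, since the trajectory is compact and $\lambda$ is continuous (Lemma~\ref{L: lambda Lipschitz bound}) and positive on $\mathcal P_1\setminus\{\delta_0\}$, so $\inf_{t\in[0,T]}\lambda_t>0$ directly.

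There is however one real gap relative to the stated lemma. The lemma asserts that the positive lower bound on $\lambda_t$ depends \emph{only on $\pi_0$}, not on $T$; the bound on $\theta_t$ is allowed to depend on both $\pi_0$ and $T$, but the $\lambda$ bound is not. Your compactness argument gives a bound that necessarily depends on the whole track $\{\pi_t:t\in[0,T]\}$, hence on $T$, and your explicit estimate $\lambda_t\ge a p_0 e^{-MT}$ visibly deteriorates as $T\to\infty$. The paper achieves $T$-independence through the second observation $\pi_t([0,s))\le s$, which constrains how fast mass can pile up at $0$ and yields $\lambda_t\ge 1/4$ uniformly for all $t\ge 1/2$ regardless of $\pi_0$ or $T$. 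This is the ingredient your proof is missing, and I do not see a way to recover it from compactness alone. For the purposes of proving well-posedness over a fixed $[0,T]$ this deficiency is harmless, but as a proof of Lemma~\ref{L: theta bounded for time 1/8} as literally stated it falls short.
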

\begin{proof}
Firstly, we bound $\lambda_t$ away from $0$ for a short time in terms of $\pi_0$. Since $\pi_0$ is not supported on $\{0\}$ there exist $x_1, \epsilon > 0$ such that $\pi_0([x_1, \infty)) > \epsilon$.  
We have $$\varphi(t) \mu_t([0,\infty)) = \varphi(t) \le 1\,.$$ Therefore, applying equation~\eqref{eq: wave test function} to suitable non-negative test functions bounded above by $\mathbf{1}(x > x_1)$, we deduce
$$\pi_t([x_1,\infty)) \ge \pi_t([x_1+t,\infty) \ge \pi_0([x_1,\infty)) - t\,.$$
Thus for $t \le \epsilon/2$ we have $$\pi_t([x_1,\infty)) \ge \epsilon/2\,,$$ and so by Lemma~\ref{L: lambda lower bound} we have $\lambda_t \ge x_1 \epsilon/2$ for $t \le \epsilon/2$.

Secondly, we bound $\lambda_t$ below in a way that does not depend on $\pi_0$. Applying equation~\eqref{eq: wave test function} to non-negative $C_0^1$ test functions supported on $(-\infty,s-t]$ and using $\varphi(t) \le 1$, we find that for $0 \le s \le t$, we have
$\pi_t([0,s)) \le s$ and hence $\pi_t([s,\infty)) \ge 1 - s$. Applying Lemma~\ref{L: lambda lower bound}, we deduce
\[ \lambda_t \ge \sup_{0 \le s \le t} s(1-s) = \begin{cases} t(1-t), & (0 \le t \le 1/2), \\ \frac{1}{4}, & (t \ge 1/2)\,. \end{cases}\]

Combining these two lower bounds for $\lambda_t$ yields $b(\pi_0) > 0$ such that $\lambda_t \ge b(\pi_0)$ for all $t$.

Choose any $c \in (0,1)$. Since $\pi_0$ has finite mean, there exists $x_0 > 0$ such that both $$\int \mathbf{1}(x \ge x_0) x \,d\pi_0(x) \le \frac{b(\pi_0)c}{2}$$ and 
$$\pi_0([x_0,\infty)) \le \frac{b(\pi_0) c}{2T}\,. $$ Then for all $t \le T$ we have 
\begin{multline*} \int \mathbf{1}(x \ge x_0 + t) x \,d\pi_t(x) \le \int \mathbf{1}(x \ge x_0) (x + t)\,d\pi_0 \\ \le \frac{b(\pi_0)}{2} c + \frac{b(\pi_0) c t}{2T} \le b(\pi_0)c \le \lambda_t c\,,\end{multline*} so that by Lemma~\ref{L: explicit theta bound} we have for all $t \in [0,T]$ that
$$ \|\theta_t\|_\infty  = \theta_t(\infty) \le \frac{(x_0+t)}{\lambda_t} \exp(c/(1-c))\le \frac{x_0 + T}{b(\pi_0)} \exp(c/(1-c))\,.$$
\end{proof}
At this point we recall Lemma~\ref{L: speed bound}. Now that we know that both the mean of $\pi_t$ and $\|\theta_t\|_\infty$ are bounded for $t \in [0, T]$, we may deduce the following.
\begin{corollary}\label{Cor: speed limit}
 In the situation of Lemma~\ref{L: theta bounded for time 1/8}, $\pi_t$ moves at bounded speed during the time interval $[0,T]$ with respect to the $W_1$ metric. The bound on the speed depends only on $\pi_0$ and $T$. 
\end{corollary}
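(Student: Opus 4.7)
The plan is to combine three earlier results to bound the right-hand side of the speed inequality from Lemma~\ref{L: speed bound}. That lemma already tells us that for $0 \le u \le v \le T$,
\begin{equation*}
W_1(\pi_u,\pi_v) \le |v-u| + \int_u^v \varphi(t) \int s\, \theta_t(s)\,d\pi_t(s)\,dt,
\end{equation*}
so it suffices to produce a constant $K = K(\pi_0, T) < \infty$ such that
$\varphi(t) \int s\, \theta_t(s)\,d\pi_t(s) \le K$ for all $t \in [0,T]$.

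First I would pull the eigenfunction out of the inner integral. Lemma~\ref{L: theta bounded for time 1/8} supplies a finite uniform bound $M = M(\pi_0, T)$ on $\|\theta_t\|_\infty$ over $t \in [0,T]$, so
\begin{equation*}
\int s\, \theta_t(s)\,d\pi_t(s) \le M \int s\,d\pi_t(s).
\end{equation*}
Next I would control the first moment of $\pi_t$ using Lemma~\ref{L: mean age stays finite}, which gives $\int s\,d\pi_t(s) \le T + \int x\,d\pi_0(x)$ for all $t \in [0,T]$. Since we are in the setting where $\varphi(t) \le 1$, these bounds combine to
\begin{equation*}
\varphi(t) \int s\, \theta_t(s)\,d\pi_t(s) \le M\left(T + \int x\,d\pi_0(x)\right) =: K(\pi_0, T).
\end{equation*}

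Substituting back into the speed inequality yields $W_1(\pi_u, \pi_v) \le (1 + K(\pi_0,T))|v-u|$, which is the required Lipschitz-in-time bound with constant depending only on $\pi_0$ and $T$. There is no genuine obstacle here — all the hard work went into Lemma~\ref{L: theta bounded for time 1/8} — and the proof is essentially a two-line assembly of the preceding estimates.
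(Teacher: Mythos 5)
Your proof is correct and is exactly the assembly the paper intends: the sentence immediately preceding the corollary explicitly points to Lemma~\ref{L: speed bound} together with the bounds on $\|\theta_t\|_\infty$ (from Lemma~\ref{L: theta bounded for time 1/8}) and the mean of $\pi_t$ (from Lemma~\ref{L: mean age stays finite}), plus $\varphi(t)\le 1$. Your step-by-step combination of these, using the non-negativity of $\theta_t$ to pull out $\|\theta_t\|_\infty$, is the intended argument.
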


\subsection{Locally Lipschitz dependence of $\theta$ on $\pi$}\label{SS: theta is continuous in t}
The goal of this section is to prove the following proposition:
\begin{proposition}\label{P: theta cts wrt pi}
 The normalized leading eigenfunction $\theta$ of $\mathcal{L}_\pi$ depends continuously on $\pi$, as $\pi$ ranges over $\mathcal{P}_1([0,\infty))\setminus \{\delta_0\}$. More precisely,  $\pi \mapsto \theta$ is locally Lipschitz from $W_1$ to $L^\infty$.
\end{proposition}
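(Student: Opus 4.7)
My plan is to fix $\pi_0 \in \mathcal{P}_1([0,\infty))\setminus\{\delta_0\}$ and work in a $W_1$-neighborhood $\mathcal{U}$ of $\pi_0$ on which all relevant quantities can be controlled uniformly. Combining Lemma~\ref{L: lambda Lipschitz bound}, Lemma~\ref{L: theta locally uniformly bounded}, and Lemma~\ref{L: metric nestedness}, I can shrink $\mathcal{U}$ so that constants $b,M,A > 0$ depending only on $\pi_0$ satisfy $\lambda \ge b$, $\theta(\infty) \le M$, and $\int_{[A,\infty)} y\,d\pi(y) \le b/4$ simultaneously for every $\pi \in \mathcal{U}$. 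For $\pi,\tilde\pi \in \mathcal{U}$, let $u := \theta - \tilde\theta$ and $\epsilon := \tilde\lambda - \lambda$. Subtracting the two eigenvalue equations and regrouping yields the identity
\begin{equation*}
(\lambda I - \mathcal{L}_\pi)\,u \;=\; (\mathcal{L}_\pi - \mathcal{L}_{\tilde\pi})\tilde\theta \;+\; \epsilon\,\tilde\theta \;=:\; R\,.
\end{equation*}
By Lemma~\ref{L: useful function is Lip} the function $y\mapsto(x\wedge y)\tilde\theta(y)$ is $2\tilde\theta(x)$-Lipschitz, hence $|(\mathcal{L}_\pi - \mathcal{L}_{\tilde\pi})\tilde\theta(x)| \le 2M\,W_1(\pi,\tilde\pi)$; combined with $|\epsilon|\le 2 W_1(\pi,\tilde\pi)$ from Lemma~\ref{L: lambda Lipschitz bound}, this gives $\|R\|_\infty \le 4M\,W_1(\pi,\tilde\pi)$.

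Next I decompose $u = c\theta + u^\perp$ orthogonally in $L^2(\pi)$. Using self-adjointness of $\mathcal{L}_\pi$ and $\mathcal{L}_\pi\theta = \lambda\theta$ one checks that $\langle R,\theta\rangle_\pi = 0$, so that $(\lambda I - \mathcal{L}_\pi)u^\perp = R$ on the spectral complement of $\theta$. On that complement $\mathcal{L}_\pi$ has spectrum in $[0,\lambda_2]$, and invoking the spectral gap bound $\lambda - \lambda_2 \ge 1/\theta(\infty) \ge 1/M$ from the remark after Lemma~\ref{L: operator properties} yields $\|u^\perp\|_{L^2(\pi)} \le M\|R\|_{L^2(\pi)} \le 4M^2 W_1(\pi,\tilde\pi)$. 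I then upgrade this $L^2$ bound to an $L^\infty$ bound by splitting the defining integral for $\mathcal{L}_\pi u^\perp(x)$ at the cutoff $A$: using $(x\wedge y)\le y \le A$ on $[0,A]$ and $(x\wedge y)\le y$ on $(A,\infty)$, together with Cauchy--Schwarz (noting that $\pi$ is a probability measure) and $\int_{[A,\infty)} y\,d\pi \le b/4 \le \lambda/4$,
\begin{equation*}
|\mathcal{L}_\pi u^\perp(x)| \;\le\; A\|u^\perp\|_{L^2(\pi)} \;+\; (\lambda/4)\|u^\perp\|_\infty\,,
\end{equation*}
uniformly in $x$. Substituting into $\lambda u^\perp(x) = \mathcal{L}_\pi u^\perp(x) + R(x)$ and rearranging gives $(3\lambda/4)\|u^\perp\|_\infty \le A\|u^\perp\|_{L^2(\pi)} + \|R\|_\infty$, hence $\|u^\perp\|_\infty = O(W_1(\pi,\tilde\pi))$.

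To close the argument I must control $c$. Subtracting the two normalizations $\int\theta\,d\pi = 1 = \int\tilde\theta\,d\tilde\pi$ gives $\int u\,d\pi = \int\tilde\theta\,(d\tilde\pi - d\pi)$, which is at most $(1/b)W_1(\pi,\tilde\pi)$ in absolute value because $\tilde\theta$ is $\tilde\lambda^{-1}$-Lipschitz by~\eqref{eq: theta slope bound}. Since $\int u\,d\pi = c + \int u^\perp\,d\pi$ and $\|u^\perp\|_{L^1(\pi)}\le \|u^\perp\|_{L^2(\pi)} = O(W_1)$, this forces $|c| = O(W_1(\pi,\tilde\pi))$. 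Combining, $\|u\|_\infty \le |c|\,M + \|u^\perp\|_\infty = O(W_1(\pi,\tilde\pi))$ with implicit constant depending only on $\pi_0$ through the choices of $b$, $M$ and $A$, which is the local Lipschitz bound claimed in Proposition~\ref{P: theta cts wrt pi}.

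I expect the main obstacle to be the principled use of a uniform spectral gap. The argument above leans on the assertion $\lambda - \lambda_2 \ge 1/\theta(\infty)$ noted in the remark after Lemma~\ref{L: operator properties}, which the paper otherwise declines to use for Theorem~\ref{T: well-posedness theorem}; a fully self-contained write-up would need either to justify that spectral gap estimate inline, or to replace the orthogonal-projection step by a Neumann-series/iteration argument that exploits the positivity and simplicity of the leading eigenvalue directly and then bootstraps the $L^\infty$ bound through the same tail-splitting of $\mathcal{L}_\pi$. A secondary obstacle is pure book-keeping: one must propagate the neighborhood constants through the chain $W_1 \to L^2(\pi) \to L^\infty$ without appealing to any moment of $\pi$ stronger than what the $W_1$-topology delivers, which is why the tail control $\int_{[A,\infty)}y\,d\pi \le b/4$ provided by Lemma~\ref{L: metric nestedness} plays a central role.
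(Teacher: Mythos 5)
Your overall plan is sound and the $L^2 \to L^\infty$ bootstrap via tail-splitting at $A$ is a nice idea, but the argument has a genuine gap that you yourself flag, and it is fatal as written. The resolvent bound $\|u^\perp\|_{L^2(\pi)} \le (\lambda_1 - \lambda_2)^{-1}\|R\|_{L^2(\pi)}$ requires a positive lower bound on the spectral gap that is \emph{uniform over the $W_1$-neighborhood $\mathcal{U}$}. You invoke the inequality $\lambda_1 - \lambda_2 \ge 1/\theta(\infty)$ from the remark after Lemma~\ref{L: operator properties}, but that remark is explicitly unproved in this paper (``We will not use these properties in the proof of Theorem~\ref{T: well-posedness theorem}''), and as stated it appears to be false: for $\pi$ uniform on $[0,1]$ the eigenvalues are $\lambda_n = 4/((2n-1)^2\pi^2)$ so $\lambda_1 - \lambda_2 \approx 0.36$, while $\theta(\infty) = \pi/2$ gives $1/\theta(\infty) \approx 0.64$, so the claimed bound is violated (the remark presumably intends a relative or differently normalized gap). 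Your suggested fallback of a Neumann-series argument does not escape this problem: the series $\lambda^{-1}\sum_k (\mathcal{L}_\pi/\lambda)^k$ on $\theta^\perp$ has radius of convergence controlled by exactly the same ratio $\lambda_2/\lambda_1$, so you would still need a uniform quantitative gap. You would therefore have to prove, from scratch, that $\lambda_2$ is (upper semi-)continuous in $W_1$ and strictly less than $\lambda_1$ locally uniformly; this is not trivial and is not available from any lemma in the paper.

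The paper's own proof sidesteps the spectral gap entirely. It first establishes the intermediate result that $\pi \mapsto \theta(\cdot)/(\cdot)$ is locally Lipschitz from $W_1$ to $L^\infty((0,\infty))$ (Lemma~\ref{L: theta over x is Lip wrt pi}). This is done by rewriting the eigenvalue equation as the inhomogeneous Volterra-type identity~\eqref{eq: inhomogeneous Fredholm for theta}, $\lambda\theta(x) = x - \int_0^x \theta(y)(x-y)\,d\pi(y)$, subtracting the two instances, bounding the $d(\pi_2 - \pi_1)$ term by the dual $W_1$ formulation (since $y \mapsto \theta_2(y)(x-y)$ is $x/\lambda_2$-Lipschitz on $[0,x]$), and applying Gr\"onwall's inequality to $|\theta_1(x) - \theta_2(x)|/x$. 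The proof of Proposition~\ref{P: theta cts wrt pi} then splits the integral defining $\mathcal{L}_{\pi_1}(\theta_1-\theta_2)$ at the cutoff $2r$, using the $\theta/x$ Lipschitz bound on $[0,2r]$ and the tail condition $\int\mathbf{1}(y\ge 2r)y\,d\pi_1 \le \lambda/2$ on $[2r,\infty)$, exactly in the spirit of your $L^2 \to L^\infty$ bootstrap --- so your tail-splitting idea is on the mark. The crucial difference is that the paper's Gr\"onwall argument gives the needed $L^\infty$ control on the compact part directly, with no orthogonal projection and hence no spectral gap. If you want to salvage your approach, you would need to first prove a locally uniform positive lower bound on $\lambda_1 - \lambda_2$; otherwise, adopt the Volterra/Gr\"onwall route.
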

Remark: taking $\pi_0  = \pi^p = (1-p)\delta_0 + p\delta{1/p}$, the solution of \eqref{eq: IVP} + \eqref{eq: phi defined by theta} initially moves at speed $1+O(p)$ with respect to $W_1$, yet $\frac{d}{dt}\theta_t(1/p)|_{t = 0} = 1/p$. This example shows that the mapping $\pi \mapsto \theta$ is not globally Lipschitz from $W^1$ to $L^\infty$, even if we restrict attention to the subspace of age-critical measures.

\begin{lemma}\label{L: theta over x is Lip wrt pi}
 The map $\pi \mapsto ( x \mapsto \theta(x)/x)$ is locally Lipschitz from $W_1$ to $L^\infty((0,\infty))$, on $\mathcal{P}_1([0,\infty))\setminus\{\delta_0\}$.
\end{lemma}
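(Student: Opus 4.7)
The plan is to derive a pointwise Gr\"onwall-type inequality from the eigenvalue equation $\lambda g(x) = \int \min(y/x,1)\,\theta(y)\,d\pi(y)$ (obtained by dividing $\mathcal{L}_\pi\theta = \lambda\theta$ through by $x$) and its analog for $\tilde\pi$. Subtracting and splitting into a change-of-measure piece and a change-of-eigenfunction piece, Lemma~\ref{L: useful function is Lip} (after dividing through by $x$) shows that the change-of-measure integrand $y\mapsto \min(y/x,1)\theta(y)$ is $2g(x)$-Lipschitz, so by Kantorovich--Rubinstein duality that piece is bounded in modulus by $2g(x)W_1(\pi,\tilde\pi)$. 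Writing $\theta(y)-\tilde\theta(y)=y(g(y)-\tilde g(y))$ in the remaining piece, and combining with $|\lambda-\tilde\lambda|\le 2W_1(\pi,\tilde\pi)$ from Lemma~\ref{L: lambda Lipschitz bound}, I obtain
\begin{equation*}
\tilde\lambda\,|g(x)-\tilde g(x)| \;\le\; 4\,g(x)\,W_1(\pi,\tilde\pi) \;+\; \int \min(y/x,1)\,y\,|g(y)-\tilde g(y)|\,d\tilde\pi(y). \quad(\star)
\end{equation*}

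Working inside a small $W_1$-ball around a base measure $\pi_0\in\mathcal{P}_1([0,\infty))\setminus\{\delta_0\}$, Lemmas~\ref{L: metric nestedness}, \ref{L: lambda Lipschitz bound}, and \ref{L: theta locally uniformly bounded} supply uniform lower bounds $\lambda,\tilde\lambda\ge b>0$, upper bounds $\theta(\infty),\tilde\theta(\infty)\le M$, and bounded first moments; in particular $g(x)\le 1/b$ globally and the right-hand side of $(\star)$ is finite. The naive Gr\"onwall closure of $(\star)$ in $L^\infty$ however barely fails, because $\sup_x\int\min(y/x,1)y\,d\tilde\pi(y) = m_1(\tilde\pi)$, and $m_1(\tilde\pi)\ge\tilde\lambda$ by the operator-norm bound $\|\mathcal{L}_{\tilde\pi}\|\le\int y\,d\tilde\pi$ from Section~2.5.

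To close the estimate I exploit that this obstruction only occurs in the limit $x\to 0^+$, where the behaviour is known explicitly: $g(0^+)=1/\lambda$ and $\tilde g(0^+)=1/\tilde\lambda$, so $|g(0^+)-\tilde g(0^+)| = |\tilde\lambda-\lambda|/(\lambda\tilde\lambda) = O(W_1(\pi,\tilde\pi))$. Rewriting $\lambda g(x)=1-\int(1-y/x)_+\,\theta(y)\,d\pi(y)$ (using that $(1-y/x)_+$ is supported on $y\le x$) and repeating the argument gives the sharper near-zero variant
\begin{equation*}
\tilde\lambda\,|g(x)-\tilde g(x)| \;\le\; 4W_1(\pi,\tilde\pi)/\lambda \;+\; \Bigl(\int_0^x y\,d\tilde\pi(y)\Bigr)\cdot\|g-\tilde g\|_{\infty,[0,x]},
\end{equation*}
whose integral coefficient vanishes as $x\to 0$. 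Choosing $x_1>0$ with $\int_0^{x_1}y\,d\tilde\pi(y)\le\tilde\lambda/2$ uniformly over the neighbourhood, this yields $\|g-\tilde g\|_{\infty,[0,x_1]}=O(W_1(\pi,\tilde\pi))$. For $x\to\infty$, dominated convergence (the integrand is bounded by $y\in L^1(\tilde\pi)$ and tends to zero pointwise) gives $\int\min(y/x,1)y\,d\tilde\pi(y)\to 0$, so $(\star)$ closes cleanly for $x$ beyond some threshold $X$, using also $g(x)\le\theta(\infty)/x\to 0$. The intermediate range $[x_1,X]$ is handled by a splitting combining both bounds and the continuity/monotonicity of $g$ and $\tilde g$.

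The main obstacle is precisely this calibration: the natural Gr\"onwall coefficient in $(\star)$ equals $m_1(\tilde\pi)$ in the limit, which coincides with the upper bound for $\tilde\lambda$, so the estimate fails in a borderline fashion. Closing it forces one to bring in the explicit endpoint value $g(0^+)=1/\lambda$ near zero and the decay $g(x)\le\theta(\infty)/x$ at infinity, rather than relying on $(\star)$ alone.
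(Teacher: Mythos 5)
Your proposal finds the same key algebraic rewrite as the paper — using the normalization $\int\theta\,d\pi=1$ to pass from $\lambda\theta(x)=\int (x\wedge y)\theta(y)\,d\pi(y)$ to $\lambda\theta(x)=x-\int_0^x(x-y)\theta(y)\,d\pi(y)$, i.e.\ $\lambda g(x)=1-\int(1-y/x)_+\theta(y)\,d\pi(y)$ — and also correctly supplies the Kantorovich--Rubinstein bound for the change-of-measure term and the $|\lambda-\tilde\lambda|\le 2W_1$ bound from Lemma~\ref{L: lambda Lipschitz bound}. However there is a real gap in how you close the estimate, and it is worth being precise about why.

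First, your ``naive'' inequality $(\star)$ is not a Gr\"onwall inequality at all: the right-hand side integral $\int\min(y/x,1)\,y\,|g(y)-\tilde g(y)|\,d\tilde\pi(y)$ runs over all $y$, including $y>x$, so it is a Fredholm-type fixed-point estimate, not a Volterra one, and the issue is not merely borderline constants. Second, when you invoke the rewrite you immediately coarsen it to a sup-norm bound $\bigl(\int_0^x y\,d\tilde\pi\bigr)\,\|g-\tilde g\|_{\infty,[0,x]}$, and this self-improving estimate genuinely fails once $\int_0^x y\,d\tilde\pi(y)>\tilde\lambda$, which must eventually happen since $m_1(\tilde\pi)\ge\tilde\lambda$. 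Your remedy — splitting $(0,\infty)$ into $[0,x_1]$, $[x_1,X]$, $[X,\infty)$ — does not close: the large-$x$ argument via $(\star)$ still needs control of $h(y)$ for $y\in[x_1,X]$ (since $\min(y/x,1)=1$ there), and the proposed handling of $[x_1,X]$ is left as ``combining both bounds,'' which is not an argument. The fix is simply not to coarsen: keep the near-zero variant in its Volterra integral form
\[
\tilde\lambda\,|g(x)-\tilde g(x)|\;\le\;\frac{\text{const}\cdot W_1(\pi,\tilde\pi)}{\lambda}\;+\;\int_0^x y\,|g(y)-\tilde g(y)|\,d\tilde\pi(y)\,,
\]
whose kernel $y\,d\tilde\pi(y)$ on $(0,\infty)$ has \emph{finite} total mass $m_1(\tilde\pi)$. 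Gr\"onwall's inequality does not require the kernel mass to be below any threshold; it merely produces the factor $e^{\,2m_1/\lambda}$, which is uniformly bounded on a $W_1$-neighbourhood. This is exactly what the paper does at~\eqref{eq: inequality suitable for Gronwall}--\eqref{eq: Gronwall bound for theta difference}, and it makes the three-region split, the dominated-convergence argument at infinity, and the endpoint analysis at $0^+$ all unnecessary.
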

\begin{proof}
 Let $\pi, \pi_1, \pi_2 \in \mathcal{P}_1([0,\infty))$ and let $m= \int x\,d\pi(x)$.  Let $\theta, \theta_1$ and $\theta_2$ be the normalized principal eigenfunctions of the operators $\mathcal{L}_{\pi}, \mathcal{L}_{\pi_1}$ and $\mathcal{L}_{\pi_2}$, with leading eigenvalues $\lambda$, $\lambda_1$ and $\lambda_2$ respectively.  Suppose also that $\pi_1$ and $\pi_2$ lie in the neighborhood of $\pi$ defined by $W_1(\pi_i,\pi) \le \lambda/4$, so that by Lemma~\ref{L: lambda Lipschitz bound} we have $\lambda/2 \le \lambda_i \le 2\lambda$ for $i=1,2$.
We have to find a constant $C(\pi)$ depending only on $\pi$ such that for all $x \in (0,\infty)$, $$\left|\frac{\theta_1(x)}{x} - \frac{\theta_2(x)}{x}\right| \le C(\pi) W_1(\pi_1,\pi_2)\,.$$
For the remainder of this proof we will use the symbol $\int_0^x$ to denote the integral over the open interval $(0,x)$. For $i=1,2$ and for each $x \ge 0$ we have
\begin{eqnarray} \notag \lambda \theta_i(x)  = \int (x \wedge y) \theta_i(y) \,d\pi_i(y) & = & \int (x - (x-y)_{+}) \theta_i(y)\,d\pi_i(y)\\ & = & x - \int_0^x \theta_i(y)(x-y)\,d\pi_i(y),\label{eq: inhomogeneous Fredholm for theta}\end{eqnarray}
because $\int \theta_i(y)\,d\pi_i(y) = 1$, and the integrand in the final integral vanishes at the endpoints $0$ and $x$.  
Hence
$$ \lambda_1\theta_1(x) - \lambda_2\theta_2(x) \;=\;  \int_0^x \theta_1(y)(x-y) \,d\pi_1(y) - \int_0^x \theta_2(y)(x-y) \,d\pi_2(y) $$
\begin{equation}  = \; \int_0^x (\theta_1(y) - \theta_2(y)) (x-y) \,d\pi_1(y)\; + \; \int_0^x \theta_2(y) (x-y) \,d(\pi_2 - \pi_1)(y)\,.\label{eq: difference of thetas}
\end{equation}

To control the last integral, we use the dual formulation of the $W_1$ distance. For any $x > 0$, we claim the function $y \mapsto \theta_2(y)(x-y)$ is Lipschitz on the interval $[0,x]$, with Lipschitz constant $x/\lambda_2$. We know that $\theta_2$ is $1/\lambda_2$-Lipschitz and $\theta_2(0) = 0$. Therefore for $0 \le z \le y \le x$ we have
\begin{eqnarray*}|\theta_2(y)(x-y) - \theta_2(z)(x-z)| & \le & |(\theta_2(y) - \theta_2(z))(x-y)| + |\theta_2(z)(y-z)| \\ & \le &\frac{1}{\lambda_2}\left( (y-z)(x-y) + z(y-z)\right) \\ & = & \frac{(x-y + z)(y-z)}{\lambda_2} \; \le \; \frac{x}{\lambda_2} \,|y-z|.\end{eqnarray*}
Therefore
\begin{equation}\label{eq: integral bound via dual W1} \left|\int_0^x \theta_2(y) (x-y) \,d(\pi_2 - \pi)(y) \right| \le \frac{x}{\lambda_2}\,W_1(\pi_2,\pi_1)\,.\end{equation}
Substituting the bound~\eqref{eq: integral bound via dual W1} into~\eqref{eq: difference of thetas} and applying the triangle inequality and $(x-y) < x$ yields
$$|\lambda_1\theta_1(x) - \lambda_2\theta_2(x)| \le x \int_0^x | \theta_1(y) - \theta_2(y)|  \,d\pi_1(y) + \frac{x}{\lambda_2} W_1(\pi_2,\pi_1)\,.  $$
Since $\lambda_2 \ge \lambda/2$ we have
\begin{multline*}  \frac{\lambda}{2}\, \left|\theta_1(x)  -  \theta_2(x)\right| \; \le \;  \lambda_2\left|\theta_1(x) -\theta_2(x)\right| \\ \le \left|\lambda_1\theta_1(x) - \lambda_2\theta_2(x)\right| \;+\; \left|\lambda_2-\lambda_1\right|\theta_1(x)\\  \le   x \int_0^x \left|\theta_1(y) - \theta_2(y)\right|  \,d\pi_1(y) \,+\, \frac{x}{\lambda_2} W_1(\pi_2,\pi_1)\,+ \,\frac{2x}{\lambda_1}W_1(\pi_2,\pi_1)\\ \le  x \int_0^x \left|\theta_1(y) - \theta_2(y)\right|  \,d\pi_1(y) \,+\, \frac{2x}{\lambda} W_1(\pi_2,\pi_1)\,+\,\frac{4x}{\lambda}W_1\left(\pi_2,\pi_1\right) \\ \le  x \int_0^x \left|\theta_1(y) - \theta_2(y)\right|  \,d\pi_1(y) \,+\, \frac{6x}{\lambda} W_1\left(\pi_2,\pi_1\right)\,. \phantom{+\, \frac{6x}{\lambda} W_1\left(\pi_2,\pi_1\right)\,.} \end{multline*}

Noting that $\theta_1(0) = \theta_2(0) = 0$, we may rearrange to obtain for every $x \in (0,\infty)$ that
\begin{equation}\label{eq: inequality suitable for Gronwall} \frac{|\theta_1(x) - \theta_2(x)|}{x} \le \frac{12W_1(\pi_2,\pi_1)}{\lambda^2} + \int_0^x \frac{|\theta_1(y) - \theta_2(y)|}{y} \,\frac{2y}{\lambda}\,d\pi_1(y) \end{equation}
We have $\int x \,d\pi_1(x) \le m + \lambda/4$ because the mean is a $1$-Lipschitz functional with respect to $W_1$. Thus the measure $\nu$ defined by $\frac{d\nu}{d\pi_1}(y) = 2y/\lambda$ is finite with total mass $2m_1/\lambda \le 2(m+\lambda/4)/\lambda = \tfrac{1}{2} + 2m/\lambda $, so inequality~\eqref{eq: inequality suitable for Gronwall} allows us to apply Gr\"onwall's inequality, obtaining for all $x > 0$
\begin{eqnarray}
\notag
\left|\frac{\theta_1(x)}{x} - \frac{\theta_2(x)}{x}\right| & \le &  \frac{12}{\lambda^2}W_1(\pi_2,\pi_1) \left(1 + \int_0^x \exp\nu((y,x)) d\nu(y)\right) \\ \notag & \le & \frac{12}{\lambda^2}W_1(\pi_2,\pi_1)\left(1+\left(\frac{1}{2} + \frac{2m}{\lambda}\right) \exp \left(\frac{1}{2} + \frac{2m}{\lambda}\right)\right)\\ & = & C(\pi)\,W_1(\pi_1,\pi_2)\,.\label{eq: Gronwall bound for theta difference}
\end{eqnarray}
\end{proof}

\begin{proof}[Proof of Proposition~\ref{P: theta cts wrt pi}]
Let $\pi \in \mathcal{P}_1$ have mean $m = \int x \,d\pi(x) < \infty$, and let ${\pi}_1, {\pi}_2 \in \mathcal{P}_1$ satisfy $W_1({\pi}_i,\pi) < \lambda/8$ for $i=1,2$. We may choose $r < \infty$ such that $\pi \in \mathcal{P}_{r,\lambda/4}$. By Lemma~\ref{L: metric nestedness} with $r' = 2r$ we have ${\pi}_i \in \mathcal{P}_{2r, \lambda/2}$ for $i=1,2$.  Let the normalized leading eigenfunctions of $\mathcal{L}_\pi$ and 
$\mathcal{L}_{{\pi}_i}$ be $\theta$ and ${\theta}_i$ respectively, with eigenvalues $\lambda, {\lambda}_i$. By Lemma~\ref{L: lambda Lipschitz bound} we have 
$$\frac{3\lambda}{4} \le \lambda_i \le \frac{5\lambda}{4} \quad \text{for $i=1,2$, and }\quad |\lambda_1 - \lambda_2| \le 2W_1(\pi_1,\pi_2)\,.$$
From Lemma~\ref{L: theta locally uniformly bounded} we have for all $x \ge 0$, 
$${\theta}_i(x) \le {\theta}_i(\infty) \le \frac{4 e r}{\lambda} \quad \text{for $i=1,2$}\,.$$

Our goal is to bound $|{\theta}_1(x) - {\theta}_2(x)|$ uniformly in $x$, with the bound depending linearly on $W_1({\pi}_1,{\pi}_2)$, with a constant that depends only on $\pi$. Proceeding as in the previous lemma, we have
\begin{multline} \label{eq: theta difference 1} \frac{3\lambda}{4}\left|{\theta}_1(x) - {\theta}_2(x) \right| \le {\lambda}_2\left|{\theta}_1(x) - {\theta}_2(x) \right|  \le  \left| {\lambda}_1{\theta}_1(x) - {\lambda}_2 {\theta}_2(x) \right| + \left| {\lambda}_2 - {\lambda}_1\right| {\theta}_1(x) \\   \le  \left| {\lambda}_1{\theta}_1(x) - {\lambda}_2{\theta}_2(x) \right| + \frac{8 e r}{\lambda} W_1\left({\pi}_1,{\pi}_2\right)\,.\end{multline}
Now we bound $\left|{\lambda}_1{\theta}_1(x) - {\lambda}_2{\theta}_2(x)\right|$. For any $x \in [0,\infty]$ we have
$$ \lambda_1 \theta_1(x) = \int (x \wedge y) \theta_1(y)\,d\pi_1(y)\,, \quad \lambda_2\theta_2(x) = \int (x \wedge y) \theta_2(y) \,d\pi_2(y)$$
so
\begin{multline}\label{E: difference of theta and theta tilde}\left| \lambda_2\theta_2(x) - \lambda_1\theta_1(x)\right|  \le  \int (x \wedge y) \left| \theta_2(y) - \theta_1(y)\right| \,d\pi_1(x) \\ +\; \left| \int (x \wedge y)\theta_2(y)\,d(\pi_2 - \pi_1)(y)\right|\,. 
\end{multline}
We may bound the second integral on the right-hand side of \eqref{E: difference of theta and theta tilde} using the dual formulation of $W_1$, since by Lemma~\ref{L: useful function is Lip}, the function $y \mapsto (x \wedge y)\theta_2(y)$ is Lipschitz with constant $2\theta_2(x)$, which is at most $2\theta_2(\infty) \le 8 e r/ \lambda$.  Hence
$$\left|\int (x \wedge y)\theta_2(y)\,d(\pi_2 - \pi_1)(y)\right| \;\le\; \frac{8 e r}{\lambda} W_1(\pi_2,\pi_1)\,.$$
Splitting the first integral on the right-hand side of \eqref{E: difference of theta and theta tilde} at $2r$, we rewrite $\int (x \wedge y) \left| \theta_2(y) - \theta_1(y)\right| \,d\pi_1(x)$ as
\begin{multline*}  \int \mathbf{1}(y < 2r) (x \wedge y) \left| \theta_2(y) - \theta_1(y)\right|\,d\pi_1(y) \\ {\;+\; \int \mathbf{1}(y \ge 2r) (x \wedge y) \left| \theta_2(y) - \theta_1(y)\right|\,d\pi_1(y) } \\  \le \; \int \mathbf{1}(y < 2r)y^2 \sup_{y < 2r} \left| \frac{\theta_2(y)}{y} - \frac{\theta_1(y)}{y}\right| \,d\pi_1(y)\;\\ {+\; \| \theta_2 - \theta_1\|_{\infty} \int \mathbf{1}(y \ge 2r) y \,d\pi_1(y) } \\
 \le \; 4r^2\, C(\pi) W_1(\pi_2,\pi_1) + \frac{\lambda}{2}\| \theta_2 - \theta_1\|_{\infty}\,.
\end{multline*}
Adding these bounds we have
\begin{equation}\label{eq: theta difference 2}\left| \lambda_2\theta_2(x) - \lambda_1\theta_1(x)\right|  \;\le\; 4r^2\, C(\pi) W_1(\pi_2,\pi_1) \,+\, \frac{\lambda}{2}\| \theta_2 - \theta_1\|_{\infty}\,+\,8 e r/ \lambda\,.\end{equation}
Substituting the bound~\eqref{eq: theta difference 2} into~\eqref{eq: theta difference 1}, we obtain
$$ \frac{3\lambda}{4}\|\theta_2 - \theta_1\|_\infty \le 4r^2C(\pi)W_1(\pi_2,\pi_1) + \frac{\lambda}{2}\|\theta_2 - \theta_1\|_\infty + \frac{8 e r}{\lambda} W_1(\pi_2,\pi_1)$$ 
Because $\theta_1$ and $\theta_2$ are both bounded, we have $\|\theta_2 - \theta_1\|_\infty < \infty$, so we may rearrange to obtain
$$ \|\theta_2 - \theta_1\|_\infty \le 4\left(4r^2 C(\pi) + \frac{8er}{\lambda}\right)\,W_1(\pi_2,\pi_1)\, = \, C'(\pi)\, W_1(\pi_1,\pi_2)\,.$$

\end{proof}

\subsection{Better control of solutions} 
 We can now control how quickly $\tau_{-t}\pi_t$ decays as $t$ grows.
\begin{lemma}\label{L: limited decay}
In the situation of Lemma~\ref{L: theta bounded for time 1/8} there exists a constant $C > 0 $ depending only on $\pi_0$ and $T$ such that for all $t \in [0,T]$ and all $s$ in the support of $\pi_0$, we have $$\frac{d\tau_{-t}\pi_t}{d\pi_0}(s) \ge e^{-Ct} \,.$$ 
\end{lemma}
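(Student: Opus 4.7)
The plan is to read off the density decay of $\tau_{-t}\pi_t$ with respect to $\pi_0$ directly from the travelling wave formulation in Lemma~\ref{L: travelling wave 1}. The key observation is that for any non-negative test function $f \in C_0^1(\mathbb{R})$ with $f(x) = 0$ for $x \le 0$, the reinjection term $\varphi(t)f(-t)$ in equation~\eqref{eq: wave test function} vanishes for $t > 0$, leaving
$$\frac{d}{dt}\int f\,d\tau_{-t}\pi_t \;=\; -\int f(s-t)\,\varphi(t)\,\theta_t(s)\,d\pi_t(s).$$
By Lemma~\ref{L: theta bounded for time 1/8} there is a constant $M = M(\pi_0, T)$ with $\theta_t(s) \le M$ for all $s \ge 0$ and $t \in [0,T]$, and since $\varphi(t) \le 1$ this yields the differential inequality $\frac{d}{dt}\int f\,d\tau_{-t}\pi_t \ge -M\int f\,d\tau_{-t}\pi_t$. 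Gr\"onwall's inequality then gives $\int f\,d\tau_{-t}\pi_t \ge e^{-Mt}\int f\,d\pi_0$.

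To translate this into the stated Radon-Nikodym lower bound, I would aim for the equivalent measure inequality $\tau_{-t}\pi_t|_{[0,\infty)} \ge e^{-Mt}\pi_0$, which requires testing against a rich enough class of non-negative functions. I would enlarge~\eqref{eq: travelling wave integral form} to admit all Lipschitz compactly supported test functions by mollification: $f_n := f \ast \psi_{1/n}$ converges uniformly to $f$ while remaining uniformly bounded in sup norm, so dominated convergence closes the loop using $\theta_t \le M$ and $\varphi \le 1$ to dominate the integrands term by term. This enlargement is genuinely needed because any $f \in C_0^1(\mathbb{R})$ vanishing on $(-\infty,0]$ is forced to have $f(0) = f'(0) = 0$ and so cannot detect any atom of $\pi_0$ at the origin.

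With the larger test function class available, the tent functions $f_\epsilon(x) = (1-x/\epsilon)_{+}$ (extended by zero to negative reals) give, after passing $\epsilon \to 0$, the atom bound $\tau_{-t}\pi_t(\{0\}) \ge e^{-Mt}\pi_0(\{0\})$, while Lipschitz approximations of indicators of bounded subsets of $(0,\infty)$ handle the remaining mass. The only real obstacle is the routine mollification step; the underlying dynamical picture---that mass of $\pi_0$ at age $s_0$ decays at rate $\varphi(t)\theta_t(s_0+t) \le M$ as it is transported to age $s_0 + t$---is entirely transparent, and the constant in the conclusion is just $C = M$ from Lemma~\ref{L: theta bounded for time 1/8}.
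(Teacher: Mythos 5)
Your underlying dynamical picture and the Gr\"onwall mechanism are exactly the paper's, but your restriction to test functions vanishing on $(-\infty,0]$ is an unnecessary self-inflicted wound, and the repair you propose does not close the gap. The paper works with \emph{all} non-negative $f\in C_0^1(\mathbb{R})$ in equation~\eqref{eq: travelling wave integral form}; the reinjection term $\int_0^t \varphi(u)f(-u)\,du$ is simply non-negative and can be dropped from the lower bound without being made to vanish identically. This immediately yields $\int f\,d(\tau_{-t}\pi_t)\ge e^{-Ct}\int f\,d\pi_0$ for every non-negative $f\in C_0^1(\mathbb{R})$, which is a rich enough class to force the measure inequality $\tau_{-t}\pi_t\ge e^{-Ct}\pi_0$ on $[0,\infty)$, atoms at the origin included.

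By insisting that $f$ vanish on $(-\infty,0]$ you force $f(0)=0$ and so cannot see an atom of $\pi_0$ at the origin, and the patch you describe does not actually restore it: the tent function $f_\epsilon(x)=(1-x/\epsilon)_+$ extended by zero to the negative reals has $f_\epsilon(0)=1$ but $f_\epsilon(0^-)=0$, so it is not even continuous, let alone a legitimate Lipschitz test function; and if you symmetrize to $(1-|x|/\epsilon)_+$ then the birth term $\varphi(u)f(-u)$ reappears for $u<\epsilon$, which is precisely the term your restriction was designed to eliminate. Since that term is harmlessly non-negative in the first place, the cleanest fix is to delete the restriction entirely and argue as the paper does. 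One further small point: your display $\frac{d}{dt}\int f\,d\tau_{-t}\pi_t \ge -M\int f\,d\tau_{-t}\pi_t$ implicitly treats $\theta_t$ evaluated at the shifted argument, so the bound should be stated via $\varphi(t)\theta_t(\infty)\le M$ rather than a pointwise $\theta_t(s)$; the paper's choice to work with the integral form~\eqref{eq: travelling wave integral form} rather than the differentiated form sidesteps regularity questions about $t\mapsto \int f\,d\tau_{-t}\pi_t$, though this is minor.
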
 
\begin{proof}
 For any non-negative $f \in C^1_0(\mathbb{R})$  we have
\begin{eqnarray*} \int f(s) \,d(\tau_{-t}\pi_t)(s) & = & \int f(s-t)\,d\pi_t(s) \\ & \ge &\int f(s)\,d\pi_0(s) - \int_0^t \varphi(u) \int \theta_u(s) f(s-u)\,d\pi_u(s) \,du \\
& = & \int f(s)\,d\pi_0(s) - \int_0^t \varphi(u) \int \theta_u(s) f(s)\,d(\tau_{-u}\pi_u)(s) \,du \\
& \ge & \int f(s)\,d\pi_0(s) - \sup_{u \in [0,t]}(\varphi(u)\theta_u(\infty))\,\int_0^t  \int f(s)\,d(\tau_{-u}\pi_u)(s) \,du  \end{eqnarray*}
For each $t  \in [0,T]$, write $I(t)= \int f(s)\,d(\tau_{-t}\pi_t)(s)$. Then the above inequality reads
\begin{equation} \label{eq: I decay} I(t) \ge I(0) - \sup_{u \in [0,t]}(\varphi(u)\theta_u(\infty))\,\int_0^t I(u)\,du\,.\end{equation}
From the proof of Lemma~\ref{L: theta bounded for time 1/8}, taking $c=1/2$, and the assumption that $\varphi$ takes values in $[0,1]$, we have $$\sup_{u \in [0,t]}(\varphi(u)\theta_u(\infty)) \le \frac{e(x_0+ T)}{b(\pi_0)}\,,$$ which together with ~\eqref{eq: I decay} implies 
$$I(t) \ge I(0) \exp\left(-\frac{e(x_0+T)}{b(\pi_0)}t\right)\,.$$ Since this holds for every non-negative test function $f \in C_0^1(\mathbb{R})$ we deduce the claimed inequality of measures.
\end{proof}

\begin{lemma}\label{L: phi Lipschitz wrt pi}
Define the total burning rate function $\Phi$ by 
$$ \Phi(\pi) = \left(\int \theta(x)^3 \,d\pi(x) \right)^{-1}\,.$$ Then $\Phi(\pi) \le 1$ for all $\pi \in \mathcal{P}_1([0,\infty))$, and $\Phi$ is locally Lipschitz with respect to the $W_1$ metric on $\mathcal{P}_1([0,\infty))$.
\end{lemma}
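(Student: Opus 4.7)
The plan is to prove the two assertions separately. For the upper bound $\Phi(\pi) \le 1$, we need $\int \theta(x)^3 \, d\pi(x) \ge 1$. Since $\pi$ is a probability measure, $L^p(\pi)$ norms are monotone in $p$; applied to $\theta \ge 0$ with the normalization $\int \theta \, d\pi = 1$, this gives $\|\theta\|_{L^3(\pi)} \ge \|\theta\|_{L^1(\pi)} = 1$, equivalently $\int \theta^3 \, d\pi \ge 1$. (Explicitly, Cauchy--Schwarz yields $1 = (\int \theta \, d\pi)^2 \le \int \theta^2 \, d\pi$ and then $(\int \theta^2 \, d\pi)^2 = (\int \theta^{3/2} \theta^{1/2} \, d\pi)^2 \le \int \theta^3 \, d\pi \cdot \int \theta \, d\pi$.)

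For local Lipschitzness, since $\int \theta^3 \, d\pi \ge 1$ uniformly and $x \mapsto 1/x$ is $1$-Lipschitz on $[1, \infty)$, it suffices to show that $\pi \mapsto \int \theta_\pi^3 \, d\pi$ is locally Lipschitz with respect to $W_1$. Fix $\pi \in \mathcal{P}_1([0,\infty)) \setminus \{\delta_0\}$ and let $\tilde{\pi}$ lie in a small $W_1$-neighbourhood of $\pi$, small enough that Lemmas~\ref{L: lambda Lipschitz bound} and~\ref{L: theta locally uniformly bounded} and Proposition~\ref{P: theta cts wrt pi} all apply, yielding a lower bound $\lambda_{\tilde{\pi}} \ge \lambda/2$, a uniform bound $M = M(\pi)$ with $\theta_{\tilde{\pi}}(\infty) \le M$, and a constant $C'(\pi)$ with $\|\theta_\pi - \theta_{\tilde{\pi}}\|_\infty \le C'(\pi)\, W_1(\pi, \tilde{\pi})$.

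Split the difference as
\begin{equation*}
\int \theta_\pi^3 \, d\pi - \int \theta_{\tilde{\pi}}^3 \, d\tilde{\pi} = \int \bigl( \theta_\pi^3 - \theta_{\tilde{\pi}}^3 \bigr) \, d\pi \;+\; \int \theta_{\tilde{\pi}}^3 \, d(\pi - \tilde{\pi})\,.
\end{equation*}
For the first integral, use $|\theta_\pi^3 - \theta_{\tilde{\pi}}^3| \le 3 M^2 \|\theta_\pi - \theta_{\tilde{\pi}}\|_\infty$ and integrate against the probability measure $\pi$ to obtain a bound of $3 M^2 C'(\pi) W_1(\pi, \tilde{\pi})$. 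For the second integral, apply the dual formulation of $W_1$: the function $x \mapsto \theta_{\tilde{\pi}}(x)^3$ has derivative $3 \theta_{\tilde{\pi}}^2 \theta_{\tilde{\pi}}'$ bounded in absolute value by $3 M^2 / \lambda_{\tilde{\pi}} \le 6 M^2/\lambda$ (using that $\theta_{\tilde{\pi}}$ is $1/\lambda_{\tilde{\pi}}$-Lipschitz by Lemma~\ref{L: operator properties}), so it is $(6 M^2/\lambda)$-Lipschitz, giving
\begin{equation*}
\left| \int \theta_{\tilde{\pi}}^3 \, d(\pi - \tilde{\pi}) \right| \le \frac{6 M^2}{\lambda} W_1(\pi, \tilde{\pi})\,.
\end{equation*}
Adding these bounds gives the desired local Lipschitz estimate for $\int \theta^3 \, d\pi$, and passing to reciprocals finishes the proof.

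No single step is especially delicate here: the whole argument relies on the already-established tools, with Proposition~\ref{P: theta cts wrt pi} doing the heavy lifting for the eigenfunction difference and Lemma~\ref{L: theta locally uniformly bounded} providing the local uniform sup-norm bound on $\theta$. The mildest subtlety is ensuring that $\theta_{\tilde{\pi}}^3$ really is globally Lipschitz so that the dual $W_1$ formula applies to the signed measure $\pi - \tilde{\pi}$ on the noncompact half-line; this is immediate from boundedness of $\theta_{\tilde{\pi}}$ combined with its global $1/\lambda_{\tilde{\pi}}$-Lipschitz constant.
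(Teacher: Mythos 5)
Your proof is correct and follows essentially the same approach as the paper: the reciprocal is reduced to showing $\pi \mapsto \int \theta^3\,d\pi$ is locally Lipschitz, which is then done via the same split into a $\|\theta_\pi - \theta_{\tilde\pi}\|_\infty$ term (handled by Proposition~\ref{P: theta cts wrt pi}) and a $\int \theta^3\,d(\pi-\tilde\pi)$ term (handled by the $W_1$ dual formulation and the Lipschitz bound on $\theta^3$ from the local uniform bounds on $\|\theta\|_\infty$ and $1/\lambda$). The only differences are cosmetic: you integrate the difference of cubes against $\pi$ rather than $\tilde\pi$, which is the mirror-image decomposition, and you prove $\int\theta^3\,d\pi \ge 1$ by iterated Cauchy--Schwarz where the paper invokes Jensen's inequality directly.
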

\begin{proof}
Jensen's inequality gives $\int \theta^3(x) \,d\pi(x) \ge \left(\int \theta(x) \,d\pi(x)\right)^3 = 1$, so $\Phi(\pi) \le 1$. 

Since $y \mapsto 1/y$ is $1$-Lipschitz on $[1,\infty)$, it suffices to show that $\int \theta^3(x) \,d\pi(x)$ is a locally Lipschitz function of $\pi$. Around any point of $\mathcal{P}_1([0,\infty))$ we may find a neighbourhood $U$  on which the map $\pi \mapsto \theta$ is $C$-Lipschitz from $W_1$ to $L^\infty$, on which $\|\theta\|_\infty$ is bounded by $B$, and on which $\lambda > \epsilon > 0$.
Let $\pi, \tilde{\pi} \in U$ and let $\mathcal{L}_\pi$, $\mathcal{L}_{\tilde{\pi}}$ have normalized leading eigenfunctions $\theta$, $\tilde{\theta}$.
 Then
\begin{multline*} \left| \int \theta(x)^3\,d\pi(x) - \int \tilde{\theta}(x)^3\,d\tilde{\pi}(x) \right| \le \left| \int \theta(x)^3 -  \tilde{\theta}(x)^3\,d\tilde{\pi}(x) \right| \\+\; \left| \int \theta(x)^3\,d(\pi-\tilde{\pi})(x) \right|\,.\end{multline*}
We have $|\theta(x) - \tilde{\theta}(x)| \le C W_1(\pi, \tilde{\pi})$ so the first integral on the RHS is bounded by $3 C B^2 W_1(\pi,\tilde{\pi})$. For the second term, note that $x \mapsto \theta(x)$ is $1/\lambda$-Lipschitz, therefore $1/\epsilon$-Lipschitz. Hence $x \mapsto \theta(x)^3$ is $3B^2/\epsilon$-Lipschitz, and the second integral is bounded by $(3B^2/\epsilon) W_1(\pi,\tilde{\pi})$. 
\end{proof}

\begin{lemma}\label{L: everything Lipschitz on sausage}
Let $\pi_t: t \in [0,T]$ be a solution of \eqref{eq: IVP} for some continuous control function $\varphi: [0,T] \to [0,1]$. Then there exist $C < \infty$ and $\delta > 0$, depending on $\{\pi_t: t \in [0,T]\}$, such that on the set
$$S_\delta:= \{\pi \in \mathcal{P}_1\,:\, W_1(\pi, \{\pi_t: t \in [0,T]\}) < \delta\}\,,$$ the map $\pi \mapsto \theta$  is $C$-Lipschitz from $W_1$ to $L^\infty$, the map $\pi \mapsto \Phi(\pi)$ is $C$-Lipschitz from $W_1$ to $[0,1]$, and for all $\pi \in S_\delta$, we have $\lambda > 1/C$, $\theta(\infty) < C$ and $\int x\,d\pi(x) < C$. 
\end{lemma}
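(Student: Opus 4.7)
The strategy is a routine compactness-and-patching argument: using the local results already established, we promote them to uniform statements on a tubular neighborhood of the compact trajectory.

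By Corollary~\ref{Cor: speed limit} the trajectory $K := \{\pi_t : t \in [0,T]\}$ is the Lipschitz image of $[0,T]$ in $(\mathcal{P}_1([0,\infty)), W_1)$ and hence compact. On $K$, Lemma~\ref{L: theta bounded for time 1/8} supplies constants $\epsilon, B > 0$ with $\lambda_t \ge \epsilon$ and $\theta_t(\infty) \le B$, while Lemma~\ref{L: mean age stays finite} gives $\int x\,d\pi_t(x) \le M$ together with uniform integrability of $\{\pi_t\}$; the latter lets us fix $r_0$ with $K \subset \mathcal{P}_{r_0,\, \epsilon/16}$. We next propagate these bounds to a sausage $S_\delta$ for sufficiently small $\delta > 0$: Lemma~\ref{L: lambda Lipschitz bound} (the $2$-Lipschitz dependence of $\lambda$ on $\pi$ in $W_1$) and the $1$-Lipschitz continuity of the mean give $\lambda_\pi \ge \epsilon/2$ and $\int x\,d\pi(x) \le M+1$ on $S_\delta$; Lemma~\ref{L: metric nestedness} with $r' = 2r_0$ gives $S_\delta \subset \mathcal{P}_{2r_0,\, \epsilon/8}$; and Lemma~\ref{L: explicit theta bound} applied with $x_0 = 2r_0$ and $C = 1/2$ (valid because $\int \mathbf{1}(x > 2r_0)\,x\,d\pi(x) < \epsilon/8 \le \lambda_\pi/2$) then gives a uniform bound $\theta_\pi(\infty) \le B'$ on $S_\delta$. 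These three bounds already cover the non-Lipschitz assertions of the lemma.

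For the Lipschitz statements, we observe that the constants produced in the proofs of Proposition~\ref{P: theta cts wrt pi} and Lemma~\ref{L: phi Lipschitz wrt pi} depend only on $\lambda$, $\int x\,d\pi$, $\theta(\infty)$, and the containment parameter $r$ in $\mathcal{P}_{r,c}$, all now uniformly controlled on $S_\delta$. Substituting these uniform bounds yields a single constant $C_1$ such that whenever $\rho_1, \rho_2 \in S_\delta$ and $W_1(\rho_1,\rho_2) < \epsilon/16$, the nearest point $\sigma$ to $\rho_1$ in $K$ satisfies $W_1(\rho_i,\sigma) \le \delta + \epsilon/16 \le \lambda_\sigma/8$ for $i = 1,2$ (provided $\delta \le \epsilon/16$), so the local arguments of those two results apply with $\sigma$ as reference and yield $\|\theta_{\rho_1} - \theta_{\rho_2}\|_\infty \le C_1 W_1(\rho_1,\rho_2)$ and $|\Phi(\rho_1) - \Phi(\rho_2)| \le C_1 W_1(\rho_1,\rho_2)$. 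For pairs with $W_1(\rho_1,\rho_2) \ge \epsilon/16$ the trivial bounds $\|\theta_{\rho_1} - \theta_{\rho_2}\|_\infty \le 2B'$ and $|\Phi(\rho_1) - \Phi(\rho_2)| \le 1$ yield Lipschitz constants $32B'/\epsilon$ and $16/\epsilon$, respectively. Taking $C$ to be the maximum of $C_1$, $32B'/\epsilon$, $16/\epsilon$, $2/\epsilon$, $M+1$, and $B'$ completes the proof. The only delicate step is coordinating the constant choices so that all the relevant inequalities hold simultaneously; no new analytic technique is needed beyond what is already in place.
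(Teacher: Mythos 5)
Your proof is correct and follows essentially the same route as the paper: exploit compactness of the trajectory (via Corollary~\ref{Cor: speed limit}), use the local results (Lemmas~\ref{L: lambda Lipschitz bound}, \ref{L: metric nestedness}, \ref{L: explicit theta bound}, Proposition~\ref{P: theta cts wrt pi}, Lemma~\ref{L: phi Lipschitz wrt pi}) to get uniform bounds on a small sausage $S_\delta$, and patch local Lipschitz constants to a global one on $S_\delta$. You are in fact slightly more careful than the paper in handling the pairs $\rho_1,\rho_2$ with $W_1(\rho_1,\rho_2)$ not small, via the standard near/far split using the uniform bound on $\theta(\infty)$; the paper's ``passing to a finite subcover'' phrase glosses over this step, so your version is a welcome clarification.
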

\begin{proof}
Lemma~\ref{L: limited decay} implies that $\pi_t$ never becomes concentrated on $\{0\}$, so $\lambda_t > 0$ and $\theta_t$ is well-defined for all $t \in [0,T]$. 
Corollary~\ref{Cor: speed limit} says $t \mapsto \pi_t$ is  Lipschitz with respect to $W_1$ over $[0,T]$, in particular continuous, so the track $\{\pi_t: t \in [0,T]\}$ is compact with respect to the $W_1$-topology. By Lemma~\ref{L: lambda Lipschitz bound}, $\lambda$ is continuous and therefore attains its minimum on this track; this minimum is positive, so by taking $\delta$ small enough we ensure $\lambda$ is uniformly bounded away from $0$ on $S_\delta$. The mean functional $\int x \,d\pi(x)$ is $1$-Lipschitz with respect to $W_1$, so it is bounded on $S_\delta$ because $S_\delta$ is a bounded set.  Proposition~\ref{P: theta cts wrt pi} and Lemma~\ref{L: phi Lipschitz wrt pi} show that around each $\pi_t$ we may find an open $W_1$-ball on which the remaining conclusions apply. Passing to a finite subcover of the track shows that we may take $\delta$ small enough and $C$ large enough that all of the conclusions apply on $S_\delta$.
\end{proof}
 
\begin{lemma}\label{L: summary}
Let $\pi_t: t \in [0,T]$ be a solution of \eqref{eq: IVP} for some continuous control function $\varphi: [0,T] \to [0,1]$. Then  
\begin{enumerate}
\item $\pi_t \in \mathcal{P}_1([0,\infty))$ for all $t \in [0,T]$.
\item There exists $\epsilon > 0$ depending only on $\pi_0$ such that $W_1(\pi_t, \delta_0) > \epsilon$ for all $t$. 
\item $t \mapsto \pi_t$ is Lipschitz  with respect to $W_1$, \item $t \mapsto \lambda_t$ is Lipschitz ,\item $t \mapsto \theta_t$ is Lipschitz with respect to $L^\infty$, and
\item $t \mapsto \Phi(\pi_t)$ is Lipschitz.
\end{enumerate}
\end{lemma}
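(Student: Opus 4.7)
The plan is to recognize that Lemma~\ref{L: summary} is essentially a compilation: each of its six clauses has been set up in the preceding subsections, so the proof amounts to threading together the correct references. I would therefore take the clauses in order.

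Clause \emph{1} follows immediately from Lemma~\ref{L: mean age stays finite}, which gives $\int x\,d\pi_t(x) \le t + \int x\,d\pi_0(x) < \infty$, hence $\pi_t \in \mathcal{P}_1([0,\infty))$ for every $t \in [0,T]$. Clause \emph{3} is precisely Corollary~\ref{Cor: speed limit}. For clauses \emph{4}, \emph{5}, and \emph{6} I would invoke Lemma~\ref{L: everything Lipschitz on sausage}, which guarantees a $W_1$-neighborhood $S_\delta$ of the compact track $\{\pi_t : t \in [0,T]\}$ on which each of $\pi \mapsto \lambda(\pi)$, $\pi \mapsto \theta$ (into $L^\infty$), and $\pi \mapsto \Phi(\pi)$ is globally $C$-Lipschitz. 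Since clause \emph{3} has just identified $t \mapsto \pi_t$ as Lipschitz from $[0,T]$ into $(S_\delta, W_1)$, composition with each of these three Lipschitz maps yields the Lipschitz continuity of $t \mapsto \lambda_t$, of $t \mapsto \theta_t$ in $L^\infty$, and of $t \mapsto \Phi(\pi_t)$.

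The only clause that calls for a genuine observation is \emph{2}. Here I would combine Lemma~\ref{L: theta bounded for time 1/8}, which furnishes a constant $b(\pi_0) > 0$ with $\lambda_t \ge b(\pi_0)$ for all $t \in [0,T]$, with the elementary operator-norm estimate $\lambda_t = \|\mathcal{L}_{\pi_t}\| \le \int x\,d\pi_t(x)$ derived at the start of the subsection ``Properties of the operator $\mathcal{L}_\pi$''. Because $W_1(\pi_t,\delta_0) = \int x\,d\pi_t(x)$, chaining these two bounds gives
$$ W_1(\pi_t, \delta_0) \;\ge\; \lambda_t \;\ge\; b(\pi_0)\,, $$
so taking $\epsilon := b(\pi_0)$ furnishes the required uniform lower bound, depending only on $\pi_0$.

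I do not anticipate any serious obstacle: every ingredient has been engineered in the preceding subsections precisely for this moment, and the argument is essentially bookkeeping. The only ``active'' step is the recognition in clause \emph{2} that the a priori lower bound on the spectral radius $\lambda_t$ transfers, via the operator-norm inequality $\|\mathcal{L}_{\pi_t}\| \le \int x\,d\pi_t(x)$, into a lower bound on the first moment of $\pi_t$, which is exactly $W_1(\pi_t,\delta_0)$.
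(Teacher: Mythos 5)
Your proposal is correct and follows essentially the same route as the paper's own proof: uniform integrability from Lemma~\ref{L: mean age stays finite} for clause~1, the chain $W_1(\pi_t,\delta_0)=\int x\,d\pi_t(x)\ge\lambda_t\ge b(\pi_0)$ for clause~2, Corollary~\ref{Cor: speed limit} for clause~3, and composition of the bounded-speed track with the Lipschitz maps supplied by Lemma~\ref{L: everything Lipschitz on sausage} for clauses~4--6. (The one cosmetic point: the lemma asserts strict inequality $W_1(\pi_t,\delta_0)>\epsilon$, so take $\epsilon$ slightly smaller than $b(\pi_0)$.)
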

\begin{proof}
Lemma~\ref{L: mean age stays finite} says that solutions of \eqref{eq: IVP} are uniformly integrable, so they stay in $\mathcal{P}_1([0,\infty))$. 
 We have $\lambda_t \le \int x \,d\pi_t(x) = W_1(\pi_t,\delta_0)$ and $\lambda_t$ is bounded away from $0$.
 Corollary~\ref{Cor: speed limit} shows that $\pi_t$ moves with bounded speed over $[0,T]$. Combining this with Lemma~\ref{L: everything Lipschitz on sausage} gives the remaining claims.
\end{proof}

\subsection{Proof of well-posedness of ~\eqref{eq: IVP} + \eqref{eq: phi defined by theta}}

The existence of a solution of  \eqref{eq: IVP} + \eqref{eq: phi defined by theta} and $\lambda_t \equiv 1$ over the time interval $[0,T]$ for an arbitrary $\pi \in \mathcal{P}_1\setminus\{\delta_0\}$ is established by Theorem~\ref{thm_convergence}. So to establish the remaining claims of Theorem~\ref{T: well-posedness theorem} we must show uniqueness of solutions and locally Lipschitz dependence of the solution on the initial condition. This is done in Proposition~\ref{P: well-posed 1} below.

In the following, $\pi_t$, $\tilde{\pi}_t$ and $\hat{\pi}_t$ are any three solutions of \eqref{eq: IVP} + \eqref{eq: phi defined by theta} over the time interval $[0,T]$, with initial values in $\mathcal{P}_1([0,\infty)) \setminus \{\delta_0\}$. The operators $\mathcal{L}_{\pi_t}$, $\mathcal{L}_{\tilde{\pi}_t}$ and $\mathcal{L}_{\hat{\pi}_t}$ have leading eigenfunctions $\theta_t$, $\tilde{\theta}_t$, $\hat{\theta}_t$, respectively, with eigenvalues $\lambda_t$, $\tilde{\lambda}_t$, and $\hat{\lambda}_t$. They define control functions $\varphi(t)$, $\tilde{\varphi}(t)$ and $\hat{\varphi}(t)$ satisfying $\varphi(u) = \Phi(\pi_u)$,  $\hat{\varphi}(u) = \Phi(\hat{\pi}_u)$ and $\tilde{\varphi}(u) = \Phi(\tilde{\pi}_u)$. Let $\delta > 0$ and $0 < C < \infty$ be the constants provided by Lemma~\ref{L: everything Lipschitz on sausage} applied to the solution $\pi_t$. 

The following proposition says that the map $\pi_0 \mapsto \left(\pi_t: t \in [0,T]\right)$ obtained by solving~ \eqref{eq: IVP} and~\eqref{eq: phi defined by theta} is well-defined and locally Lipschitz with respect to $W_1$. 
\begin{proposition}\label{P: well-posed 1} There exist $C_1 < \infty $ and $\gamma > 0$, depending on the solution $\pi_t: t \in [0,T]$, such that
\begin{enumerate} \item If $W_1\left(\pi_0, \tilde{\pi}_0\right) < \gamma$ then  $W_1\left(\hat{\pi}_t, \tilde{\pi_t}\right) < \delta$ for all $t \in [0,T]$. \item If  $W_1\left(\pi_0, \tilde{\pi}_0\right) < \gamma$ and $W_1\left(\pi_0, \hat{\pi}_0\right) < \gamma$ then for all $t \in [0,T]$
\begin{equation}\label{eq: well-posed 1} W_1\left(\hat{\pi}_t,\tilde{\pi}_t\right) \le \,e^{C_1 t} \,W_1\left(\hat{\pi}_0,\tilde{\pi}_0\right)\,.\end{equation} \end{enumerate}
\end{proposition}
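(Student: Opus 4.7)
My plan is to run a Grönwall argument in the dual formulation of $W_1$, using the travelling-wave test functions from Lemma~\ref{L: travelling wave 1}, combined with a continuation (bootstrap) argument to secure part (1). The pieces we need are already prepared: $\Phi$ and $\pi \mapsto \theta$ are locally Lipschitz (Lemma~\ref{L: phi Lipschitz wrt pi} and Proposition~\ref{P: theta cts wrt pi}), and on the tube $S_\delta$ we have uniform bounds on $\theta(\infty)$, $\lambda$, and the first moment (Lemma~\ref{L: everything Lipschitz on sausage}). We will pick $\gamma$ small enough that any $\tilde\pi_t$ whose initial condition is within $\gamma$ of $\pi_0$ is forced to stay in $S_\delta$ for all $t\in [0,T]$, so that Grönwall can be closed.

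For part (2), fix a test function $f \in \mathrm{Lip}^1(\mathbb{R}) \cap C_0^1(\mathbb{R})$ and define
$$F_f(t) := \int f(s-t)\,d\tilde\pi_t(s) - \int f(s-t)\,d\hat\pi_t(s).$$
By Lemma~\ref{L: restricted duality}, $\sup_f |F_f(0)| = W_1(\tilde\pi_0,\hat\pi_0)$, and after the substitution $\tilde f := f(\cdot - t)$ (which is again $1$-Lipschitz), $\sup_f |F_f(t)| = W_1(\tilde\pi_t,\hat\pi_t)$. Differentiating $F_f(t)$ using Lemma~\ref{L: travelling wave 1}, substituting $f(s-t) = g_t(s) + f(-t)$ where $g_t(s) := f(s-t) - f(-t)$, and using $\int \tilde\theta_t \,d\tilde\pi_t = \int\hat\theta_t\,d\hat\pi_t = 1$, the boundary terms $\tilde\varphi(t)f(-t)$ and $\hat\varphi(t)f(-t)$ cancel exactly, leaving
$$F_f'(t) = -\tilde\varphi(t) \int g_t(s)\tilde\theta_t(s)\,d\tilde\pi_t(s) + \hat\varphi(t) \int g_t(s)\hat\theta_t(s)\,d\hat\pi_t(s).$$
This cancellation is essential, since only then can we take sup over $1$-Lipschitz test functions $f$ (which need not be bounded) without picking up an unmanageable $|f(-t)|$ factor.

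Next, split the right side using add-and-subtract as $(\hat\varphi - \tilde\varphi)\int g_t\tilde\theta\,d\tilde\pi + \hat\varphi\,[\int g_t\hat\theta\,d\hat\pi - \int g_t\tilde\theta\,d\tilde\pi]$, and split the last bracket further into $\int g_t(\hat\theta - \tilde\theta)\,d\hat\pi + \int g_t\tilde\theta\,(d\hat\pi - d\tilde\pi)$. The first piece is controlled by $|\hat\varphi - \tilde\varphi| \le C\,W_1(\tilde\pi_t,\hat\pi_t)$ times the bounded quantity $\int s\,\tilde\theta_t(s)\,d\tilde\pi_t(s) \le C^2$; the second by $\|\hat\theta_t - \tilde\theta_t\|_\infty \cdot \int s\,d\hat\pi_t \le C\,W_1(\tilde\pi_t,\hat\pi_t)\cdot C$; and the third by the dual formulation of $W_1$, since Lemma~\ref{L: f theta is Lip if f is Lip} gives that $s \mapsto g_t(s)\tilde\theta_t(s)$ is $2\tilde\theta_t(\infty) \le 2C$-Lipschitz. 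Altogether, on $S_\delta$,
$$|F_f'(t)| \le C_1 \,W_1(\tilde\pi_t,\hat\pi_t)$$
with $C_1$ depending only on the constants from Lemma~\ref{L: everything Lipschitz on sausage}. Integrating, taking the supremum over $f$ on both sides via Lemma~\ref{L: restricted duality}, and applying Grönwall's inequality yields~\eqref{eq: well-posed 1}.

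For part (1), apply the bound of part (2) with $\hat\pi = \pi$ along the maximal interval $[0,\tau) \subseteq [0,T]$ on which $\tilde\pi_t$ remains in $S_\delta$. Choose $\gamma < \tfrac12 \delta\, e^{-C_1 T}$; then on $[0,\tau)$ we obtain $W_1(\tilde\pi_t,\pi_t) \le \gamma e^{C_1 t} < \delta/2$. Since $t \mapsto \tilde\pi_t$ is $W_1$-continuous by Lemma~\ref{L: summary}, this forces $\tilde\pi_\tau \in S_\delta$ too, contradicting the maximality of $\tau$ unless $\tau = T$. The same argument applied to $\hat\pi$ gives $\hat\pi_t \in S_\delta$ throughout $[0,T]$, and then the Grönwall step of part (2) runs unconditionally. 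The main subtlety to get right is the cancellation of the birth-term contributions $\tilde\varphi(t)f(-t) - \hat\varphi(t)f(-t)$, which is what allows us to quantify over all $1$-Lipschitz test functions and recover the $W_1$ distance itself on the left-hand side; the rest is bookkeeping using the already-established Lipschitz dependencies.
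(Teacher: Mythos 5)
Your proof is correct and follows the paper's own strategy: a Gr\"onwall argument in the dual $W_1$ formulation using travelling-wave test functions, closed by the continuation/bootstrap argument that defines a maximal interval on which the solutions stay in $S_\delta$ and forces that interval to equal $[0,T]$ once $\gamma$ is taken smaller than $\delta e^{-C_1 T}$. The one genuine technical variation is that you substitute $f(s-t) = g_t(s) + f(-t)$ with $g_t(0)=0$ and exploit the normalization $\int \theta_t\,d\pi_t = 1$ to make the birth-term contributions $\tilde\varphi(t)f(-t)$ and $\hat\varphi(t)f(-t)$ cancel exactly before any estimate is taken. The paper instead retains the term $\int_0^t (\hat\varphi(u)-\tilde\varphi(u))f(-u)\,du$ and bounds it via $|f(-u)| \le u \le T$, and likewise uses $|f(s-u)|\le u+s$; those bounds are valid only under the tacit normalization $f(0)=0$ (harmless, since adding a constant to $f$ leaves $\int f\,d\mu - \int f\,d\nu$ unchanged, but never stated). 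Your cancellation makes this point moot and also removes the explicit $T$-dependence that the paper's Gr\"onwall constant picks up from the $|f(-u)|\le T$ bound, leaving a $C_1$ that depends on $T$ only through the sausage constant $C$. The remaining estimates you invoke --- the local Lipschitz dependence of $\Phi$ and $\theta$ on $\pi$, the $2\theta(\infty)$-Lipschitz bound of Lemma~\ref{L: f theta is Lip if f is Lip}, and the uniform moment and eigenvalue bounds on $S_\delta$ --- parallel the paper's estimate for estimate.
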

\begin{proof}
Consider any test function $f \in \mathrm{Lip}^1(\mathbb{R}) \cap C_0^1(\mathbb{R})$. Applying equation~\eqref{eq: travelling wave integral form} to the solutions $\hat{\pi}_t$ and $\tilde{\pi}_t$, we have
\begin{multline}\label{eq: test difference} \int f(s-t) \,d(\hat{\pi}_t - \tilde{\pi}_t)(s)  - \int f(s) \,d(\hat{\pi}_0 - \tilde{\pi}_0)(s) \;=\; \\ \int_0^t \left(\int f(s-u)\hat{\varphi}(u)\hat{\theta}_u(s)\,d\hat{\pi}_u(s) - \int f(s-u) \tilde{\varphi}(u)\tilde{\theta}_u(s)\,d\tilde{\pi}_u(s)\right)\,du \\ + \int_0^t (\hat{\varphi}(u)-\tilde{\varphi}(u))f(-u)\,du\,.\end{multline}
Let $$ T_1 = \sup\{t \in [0,T] \,|\, \hat{\pi}_u, \tilde{\pi}_u \in S_\delta \; \text{ for all $u \in [0, t]$}\}\,.$$ 
Then for $u < T_1$ we have $$|\Phi(\hat{\pi}_u) - \Phi(\tilde{\pi}_u)| \le C W_1\left(\hat{\pi}_u, \tilde{\pi}_u\right)\,,$$ so for $t \le T_1$ we have
$$\left|\int_0^t (\hat{\varphi}(u)-\tilde{\varphi}(u))f(-u)\,du\right| \le CT \int_0^t W_1\left(\hat{\pi}_u,\tilde{\pi}_u\right)\,du\,.$$
We bound the first integral on the right-hand side of~\eqref{eq: test difference} by
\begin{multline} \int_0^t \hat{\varphi}(u) \left|\int f(s-u)\hat{\theta}_u(s)\,d\!\left(\hat{\pi}_u-\tilde{\pi}_u\right)\!(s)\right| \,du\; \\+\; \int_0^t \int \left| f(s-u) \left(\hat{\varphi}(u)\hat{\theta}_u(s) - \tilde{\varphi}(u)\tilde{\theta}_u(s)\right)\right|\,d\tilde{\pi}_u(s)\,du\,.\label{eq: upper bound for first term in test difference}\end{multline}
The function $s \mapsto f(s-u) \hat{\theta}_u(s) = f(-u)\hat{\theta}_u(s) + (f(s-u) - f(-u))\hat{\theta}_u(s)$ is Lipschitz on $[0,\infty)$ with constant $u/\hat{\lambda} + 2\hat{\theta}_u(\infty)$. This constant is at most $(T+2)C$. Hence the first integral in~\eqref{eq: upper bound for first term in test difference} is at most 
$$ (T+2)C\,\int_0^t W_1\left(\hat{\pi}_u, \tilde{\pi}_u\right)\,du\,.$$ 
Next, we bound the inner integral in the second term of~\eqref{eq: upper bound for first term in test difference}.
\begin{eqnarray*} \left|\hat{\varphi}(u)\hat{\theta}_u(s) - \tilde{\varphi}(u)\tilde{\theta}_u(s)\right| & \le & \hat{\varphi}(u)\|\hat{\theta}_u - \tilde{\theta}_u\|_\infty + \tilde{\theta}(\infty)|\hat{\varphi}(u) - \tilde{\varphi}(u)| \\ & \le & C W_1\left(\hat{\pi}_u, \tilde{\pi}_u\right) + C^2 W_1\left(\hat{\pi}_u, \tilde{\pi}_u\right) \\ & = & (C+C^2)\, W_1\left(\hat{\pi}_u, \tilde{\pi}_u\right)\,.\end{eqnarray*}
For $0 \le u < T_1$ we have
$$ \int |f(s-u)|\,d\tilde{\pi}_u(s) \le \int (u+s)\,d\tilde{\pi}_u(s) \le u + u + \int s\,d\tilde{\pi}_0(s) \le 2u + C \le 2T+C\,. $$
Hence for $t \le T_1$ we have
\begin{multline*}\int_0^t \int \left| f(s-u) \left(\hat{\varphi}(u)\hat{\theta}_u(s) - \tilde{\varphi}(u)\tilde{\theta}_u(s)\right)\right|\,d\tilde{\pi}_u(s)\,du\, \\ \le C(1+C)(2T+C)\int_0^t W_1\left(\hat{\pi}_u, \tilde{\pi}_u\right)\, du\,.  \end{multline*}
Summing these bounds we obtain
\begin{equation}\label{eq: test bound} \left|\int f(s-t)\,d\!\left(\hat{\pi}_t - \tilde{\pi}_t\right)\!(s)\right|  \le  \left| \int f(s)\,d\!\left(\hat{\pi}_0 - \tilde{\pi}_0\right)\!(s)\right| + C_1 \int_0^t W_1\left(\hat{\pi}_u, \tilde{\pi}_u\right)\,du\,.
 \end{equation}
where $C_1 = C(2+4T+C+2TC+C^2)$.  Now take the supremum of both sides of~\eqref{eq: test bound} over $f \in \mathrm{Lip}^1(\mathbb{R}) \cap C^1_0(\mathbb{R})$ and apply the duality criterion of Lemma~\ref{L: restricted duality} to obtain for all $t \in [0,T_1]$
\begin{equation}\label{eq: Gronwallable bound} W_1\left(\hat{\pi}_t, \tilde{\pi}_t\right) \le W_1\left(\hat{\pi}_0, \tilde{\pi}_0\right) + C_1 \int_0^t W_1\left(\hat{\pi}_u, \tilde{\pi}_u\right)\,du\,.
 \end{equation}
Apply Gr\"onwall's inequality to~\eqref{eq: Gronwallable bound} to obtain~\eqref{eq: well-posed 1} for all $t \in [0,T_1]$. Applying the same reasoning to the pair $\pi_t, \tilde{\pi_t}$ and the pair $\pi_t, \hat{\pi}_t$ we also obtain
$$ W_1(\hat{\pi}_t, \pi_t) \le e^{C_1 t} W_1(\hat{\pi}_0, \pi_0)$$ and
$$ W_1(\tilde{\pi}_t, \pi_t) \le e^{C_1 t} W_1(\tilde{\pi}_0, \pi_0)$$
for all $t \in [0,T_1]$.

Define $\gamma = \delta \exp(-C_1 T)$. Then we claim that $T_1 = T$.  Indeed, suppose for a contradiction that $T_1 < T$. Then $$ W_1(\hat{\pi}_{T_1}, \pi_{T_1}) \le e^{C_1 T_1} W_1(\hat{\pi}_0, \pi_0) \le \delta e^{-C_1(T-T_1)} < \delta$$ and similarly $W_1(\tilde{\pi}_{T_1},\pi_{T_1}) \le \delta e^{-C_1(T-T_1)} < \delta$. Since solutions move continuously, both solutions $\tilde{\pi}_t$ and $\hat{\pi}_t$ stay in $S_\delta$ for at least a small interval beyond time $T_1$, contrary to the definition of $T_1$.
\end{proof}

\end{document}